\numberwithin{equation}{section}
\newtheorem{theorem}{Theorem}[section]
\newtheorem{proposition}{Proposition}[section]
\newtheorem{lemma}{Lemma}[section]
\renewcommand{\epsilon}{\varepsilon}
\newcommand{\abs}[1]{\left\vert #1\right\vert}
\newcommand{\R}{\mathbb{R}}
\newcommand{\N}{\mathbb{N}}
\newcommand{\Z}{\mathbb{Z}} 
\newcommand{\PA}{\mathbb{P}}
\newcommand{\EA}{\mathbb{E}}
\newcommand{\tom}{{\tilde\omega}}
\newcommand{\Po}{P_{\omega}}
\newcommand{\Eo}{E_{\omega}}
\newcommand{\Pto}{P_{\tom}}
\newcommand{\Eto}{E_{\tom}}
\newcommand{\PP}{\mathbf{P}}
\newcommand{\EE}{\mathbf{E}}
\newcommand{\PAt}{{\tilde{\mathbb{P}}}}
\newcommand{\EAt}{{\tilde{\mathbb{E}}}}
\newcommand{\hdel}{{\delta}}
\newcommand{\ts}{{\tilde\sigma}}
\newcommand{\card}{\mathop{\mathrm{card}}}
\newcommand{\Tinit}{{\mathcal T}_{\text{\itshape{init}}}}
\newcommand{\Tdir}{{\mathcal T}_{\text{\itshape{dir}}}}
\newcommand{\Tback}{{\mathcal T}_{\text{\itshape{back}}}}
\newcommand{\Tleft}{{\mathcal T}_{\text{\itshape{left}}}}
\newcommand{\Tright}{{\mathcal T}_{\text{\itshape{right}}}}
\newcommand{\Bk}{{\mathfrak{B}}}
\newcommand{\II}{{\mathcal I}}
\newcommand{\UU}{{\mathcal U}}
\newcommand{\WW}{{\mathcal W}}
\newcommand{\RR}{{\mathcal R}}
\newcommand{\1}[1]{{\mathbf 1}{\{#1\}}}
\author[A.~Fribergh]{Alexander FRIBERGH}
\address{Universit\'e de Lyon;
Universit\'e Lyon 1;
INSA de Lyon, F-69621;
Ecole Centrale de Lyon;
CNRS, UMR5208, Institut Camille Jordan,
43 blvd du 11 novembre 1918,
F-69622 Villeurbanne-Cedex, France} 
\email{fribergh@math.univ-lyon1.fr}
\author[N.~Gantert]{Nina GANTERT}
\address{CeNos, Center for Nonlinear Science, and Institut f\"ur Mathematische Statistik, Fachbereich Mathematik und Informatik, Universit\"at M\"unster, Einsteinstr. 62, D--48149 M\"unster, Germany} \email{gantert@math.uni-muenster.de}
\author[S.~Popov]{Serguei POPOV}
\address{Instituto de Matem\'atica e Estat\'istica, Universidade de S\~ao Paulo, rua do Mat\~ao 1010, CEP 05508--090, S\~ao Paulo SP, Brasil} 
\email{popov@ime.usp.br}
\keywords{slowdown, speedup, moderate deviations, transience} 
\subjclass[2000]{Primary 60K37}
\begin{document}

\title[Slowdown and speedup of transient RWRE]{On slowdown and speedup of transient random walks in random environment}

\begin{abstract}
We consider one-dimensional random walks in random environment which are transient to the right.
Our main interest is in the study of
 the sub-ballistic regime, where at time~$n$ the particle is typically at a distance of order $O(n^\kappa)$
from the origin, $\kappa\in(0,1)$. We investigate the probabilities of moderate deviations from
this behaviour. Specifically, we are interested in quenched and annealed probabilities of 
slowdown (at time~$n$, the particle is at a distance of order $O(n^{\nu_0})$ from the origin, $\nu_0\in (0,\kappa)$), and
speedup (at time~$n$, the particle is at a distance of order $n^{\nu_1}$ from the origin, $\nu_1\in (\kappa,1)$),
for the current location of the particle and for the hitting times. Also, we study
probabilities of backtracking: at time~$n$, the particle is located around $(-n^\nu)$,
thus making an unusual excursion to the left. 
For the slowdown, our results are valid in the ballistic case as well.
\end{abstract}

\maketitle

\section{Introduction and results}
\label{s_introres}

Let $\omega:=(\omega_i, \, i \in \Z)$ be a family of i.i.d.\ random variables taking values 
in $(0,1)$.
Denote by $\PP$ the distribution of~$\omega$ and by~$\EE$ the corresponding expectation. 
After choosing an environment~$\omega$ at random according to the law~$\PP$, we define the random walk in random environment (usually abbreviated as RWRE) as a nearest-neighbour random walk on~$\Z$ with transition probabilities given by $\omega$: $(X_n, \, n \geq 0)$ is the Markov chain satisfying $X_0=z$ 
and for $n \geq 0,$
\begin{align*}
\Po^z[ X_{n+1} = x+1  \mid  X_n =x] &= \omega_x, \nonumber\\
\Po^z[ X_{n+1} = x-1  \mid  X_n =x] &= 1-\omega_x.
\end{align*}
As usual, $\Po^z$ is called the \emph{quenched} law of $(X_n, \, n \geq 0)$ starting
 from $X_0=z$, 
and we denote by $\Eo^z$ the corresponding quenched expectation. Also, we denote by~$\PA^z$ 
the semi-direct product $\PP \times \Po^z$ and by~$\EA^z$ the expectation with respect to~$\PA^z$;
$\PA^z$ and~$\EA^z$ are called the \emph{annealed} probability and expectation.
When $z=0$, we write simply $\Po$, $\Eo$, $\PA$, $\EA$.

In this paper we will also consider RWRE on~$\Z_+$, with reflection to the right at the origin.
This RWRE can be defined as above, in the environment~$\tom$
given by
\[
  \tom_i = \begin{cases}
                \omega_i, & i\neq 0,\\ 
                    1, & i=0
             \end{cases}
\]
(provided, of course, that the starting point is nonnegative). We then write
$\Pto^z$, $\Eto^z$ for the quenched probability and expectation in the case
of RWRE reflected at the origin, $\PAt^z$ and~$\EAt^z$ for the annealed probability and expectation,
keeping the simplified notation $\Pto$, $\Eto$, $\PAt$, $\EAt$ for the RWRE
starting at the origin.

For all $i \in \Z$, let us introduce
\[
\rho_i:=\frac{1-\omega_i}{\omega_i}.
\]
Throughout this paper, we assume that 
\begin{equation}
\label{logrho}
\EE[\ln\rho_0]<0,
\end{equation}
 which implies (cf.~\cite{Solomon}) that
$\lim_{n\to\infty} X_n=+\infty$ $\Po$-a.s.\ for $\PP$-a.a.\ $\omega$, 
so that the RWRE is transient to the right
(or simply transient, in the case of RWRE with reflection at the origin).

We refer to~\cite{Zeitouni} for a general overview of results on RWRE. In the following we always
work under the assumption that
\begin{equation}
\label{def_kappa}
\text{there exists a unique $\kappa>0$, such that } \EE[\rho_0^{\kappa}]=1 \text{ and } \EE[\rho_0^{\kappa}\ln^+ \rho_0]<\infty.
\end{equation}

This constant plays a central role for RWRE, in particular when it exists, its value separates the \emph{ballistic} from the  \emph{sub-ballistic} regime:
\[
\kappa >1 \text{ if and only if } \frac{X_n}n \to v>0,\quad \text{$\PA$-a.s.}
\]
We refer to the case $\kappa > 1$ as the \emph{ballistic} regime and to the case $\kappa \leq 1$ as the \emph{sub-ballistic} regime.
In this paper we mainly consider the case where  the RWRE is transient (to the right) and \emph{sub-ballistic}, i.e.~the asymptotic speed is equal to~$0$.
The following result was proved in~\cite{KKS} and partially refined in \cite{ESZ}:
\begin{theorem}
\label{kks}
Let $\omega:=(\omega_i, \, i \in \Z)$ be a family
of independent and identically distributed random variables such
that
\begin{itemize}
 \item[(i)] $-\infty \leq \EE[\ln \rho_0]<0$,
 \item[(ii)] there exists $0<\kappa\leq1$ for which $\EE \left[\rho_0^{\kappa}\right] = 1$ 
  and $\EE \left[  \rho_0^{\kappa} \ln^+\rho_0  \right]<\infty,$
 \item [(iii)] the  distribution  of $\ln \rho_0$ is non-lattice.
\end{itemize}
Then, if $\kappa<1$, we have 
\[
\frac{X_n}{n^{\kappa}} \,
\stackrel{law}{\longrightarrow}\, C_1 \left(\frac 1{\mathcal{S}_{\kappa}^{ca}}\right)^{\kappa},
\]
where $\stackrel{law}{\longrightarrow}$ stands for convergence in distribution with respect to the annealed law $\PA$,
$C_1$ is a positive constant and $\mathcal{S}_{\kappa}^{ca}$ is the completely asymmetric stable law of index $\kappa$. If $\kappa=1$, we have
\[
\frac{X_n}{n/\ln n} \,
\stackrel{law}{\longrightarrow}\,
 C_2 \frac 1{\mathcal{S}_{1}^{ca}}.
\]
\end{theorem}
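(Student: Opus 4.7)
The plan is to reduce the statement to an asymptotic for the hitting times
$T_n:=\inf\{k\geq 0: X_k=n\}$. By transience and the
duality $\{X_n\geq m\}=\{T_m\leq n\}$, convergence of $X_n/n^\kappa$
under $\PA$ is equivalent to convergence of $T_n/n^{1/\kappa}$ (with a logarithmic
correction when $\kappa=1$). Writing $\tau_i:=T_i-T_{i-1}$, we therefore want to show that
$T_n=\sum_{i=1}^n\tau_i$ behaves, in distribution, like $C\,n^{1/\kappa}\,\mathcal{S}_{\kappa}^{ca}$,
where $\mathcal{S}_{\kappa}^{ca}$ denotes a completely asymmetric stable law of index~$\kappa$.

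The heart of the argument is the tail behaviour of~$\tau_i$. For a birth--death chain, the quenched
mean admits the classical explicit form
$M_i:=\Eo^{i-1}[\tau_i]=1+2\sum_{j<i}\prod_{k=j}^{i-1}\rho_k$,
which under~\eqref{def_kappa} is a convergent random series satisfying a random affine fixed-point
equation $M \stackrel{d}{=} 1+\rho_0(1+2M')$ with $M'\stackrel{d}{=}M$ independent of~$\rho_0$.
Kesten's renewal theorem for such perpetuities then yields the sharp tail
$\PP(M_i>t)\sim C_\star t^{-\kappa}$ as $t\to\infty$; the non-lattice hypothesis~(iii) is exactly what
upgrades an order-of-magnitude bound to this precise asymptotic equivalence.

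Since the $\tau_i$'s are dependent through the environment, the next step is a block decomposition.
One cuts at strict descending ladder epochs of the potential $V(k)=\sum_{j=1}^k\ln\rho_j$ (which drifts
to $-\infty$ by~\eqref{logrho}) to obtain essentially one-dependent contributions
$\tau'_\ell=\sum_{i\in B_\ell}\tau_i$, each inheriting the tail exponent~$\kappa$ from~$M_i$.
Within a block one shows that $\tau_i$ concentrates around $M_i$ on the event that $M_i$ is atypically
large, so the block sum itself has a regularly varying tail of index~$\kappa$. Applying the Gnedenko
stable limit theorem to the resulting (approximately) i.i.d.\ positive summands produces
$T_n/n^{1/\kappa}\Rightarrow C\,\mathcal{S}_{\kappa}^{ca}$ for $\kappa<1$. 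The boundary case $\kappa=1$
introduces the slowly varying factor $\ln n$ in the norming, which after inversion becomes the
$n/\ln n$ scaling stated in the theorem. The proportionality constants $C_1$ and $C_2$ then emerge
from the explicit tail constant~$C_\star$.

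The main obstacle I expect is transferring the tail of the quenched mean~$M_i$ to the actual random
time~$\tau_i$, uniformly over the $n$ blocks, while simultaneously decoupling the blocks. Kesten's
renewal theorem is essentially a one-site, marginal statement, whereas in principle the sum
$\sum_i\tau_i$ could be dominated by a single block whose environment is not accurately described by
that marginal tail. Overcoming this requires both the regeneration construction and quenched large
deviation estimates for the time to traverse a block given its potential profile. The moment condition
$\EE[\rho_0^\kappa\ln^+\rho_0]<\infty$ from~\eqref{def_kappa} is used precisely at this point to rule out
extra logarithmic corrections in the tail constant beyond those already carried by the $\kappa=1$
normalisation.
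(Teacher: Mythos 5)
The paper does not prove Theorem~\ref{kks}; it states it as a quoted external result, attributed explicitly to Kesten--Kozlov--Spitzer~\cite{KKS} and its refinement by Enriquez--Sabot--Zindy~\cite{ESZ}, so there is no internal argument to compare your sketch against. As a reconstruction of the literature proof, what you describe is essentially the ESZ route rather than the original KKS one: KKS proceed by associating to the walk a branching process with immigration in a random environment (encoding the number of left-steps from each site) and proving a stable limit theorem for the total progeny, whereas ESZ work directly with the hitting times $T_n$, the quenched expected crossing times, a Kesten/Goldie renewal-theoretic tail estimate for the resulting perpetuity, and a decoupling by descending ladder epochs of the potential~$V$. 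Your sketch follows the latter. Two small imprecisions worth flagging. First, the distributional fixed point should be $M\stackrel{d}{=}1+\rho_0(1+M')$ with $\rho_0$ independent of~$M'$, not $1+\rho_0(1+2M')$; the factor~$2$ only appears after fully unrolling the recursion into the series $1+2\sum_{j<i}\prod_{k=j}^{i-1}\rho_k$, and the extra~$2$ in the fixed point would give the wrong stationary mean (it should reduce to Solomon's $1/v=(1+\EE\rho_0)/(1-\EE\rho_0)$ in the ballistic case). Second, the relation between the walk and its hitting times is the one-sided inclusion $\{X_n\geq m\}\subseteq\{T_m\leq n\}$, not an equality; inverting a limit law for $T_n/n^{1/\kappa}$ into one for $X_n/n^\kappa$ requires an additional argument that the backtracking after time $T_m$ is $o(n^\kappa)$. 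Neither slip affects the viability of the overall strategy, which is a faithful high-level account of how the theorem is actually proved.
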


In the quenched case, the limiting behaviour is more complicated, as discussed in~\cite{PZ}. However, one still can say
that at time~$n$ the particle is ``typically'' at distance roughly~$n^\kappa$
 from the origin, since the weaker result $\lim_{n\to\infty}\ln X_n/\ln n = \kappa$, $\PA$-a.s.,
is still valid\footnote{apparently, this result is folklore, at least we were unable to find
a precise reference in the literature. Anyhow, note that it is straightforward to obtain
this result from Theorems~\ref{t_q_slow} and~\ref{t_speedup}}.

Besides the results about the location of the particle at time $n$, we are interested
also in the first hitting times of certain regions in space. For any set $A\subset\Z$, define:
\[
 T_A := \min\{n\geq 0 : X_n\in A\}.
\]
To simplify the notations, for one-point sets we write $T_a:=T_{\{a\}}$. In the case where $a$ is not an integer, the notation $T_a$ will correspond to $T_{\lfloor a \rfloor}$.

In this paper we investigate the following types of unusual behaviour of the random walk:
\begin{itemize}
 \item \emph{slowdown}, which means that at time~$n$ the particle is around~$n^{\nu_0}$,
$\nu_0<1 \wedge \kappa$, so that the particle goes to the right much slower than it typically does;
 \item \emph{backtracking}, that is, at time~$n$ the particle is found around~$(-n^\nu)$, thus performing
an unlikely excursion to the left instead of going to the right (this is, of course, only for RWRE
without reflection);
 \item \emph{speedup}, which means that the particle is going to the right faster than it should
(but still with sublinear speed): at time~$n$ the particle is around~$n^{\nu_1}$, $\kappa<\nu_1<1$ (this is possible only for $\kappa <1$).
\end{itemize}

We refer to all of the above as \emph{moderate deviations},
even for the slowdown
in the ballistic case $\kappa>1$. Indeed, in the latter case the deviation from the typical
position is linear in time, but we have that the large deviation rate function $I$ satisfies $I(0) = 0$,
and the known large deviation results only tell us that slowdown probabilities decay slower than exponentially in $n$ (see, for instance, \cite{CGZ}).

We mention here that in the literature one can find some results on moderate deviations 
for the case of recurrent RWRE (often referred to as RWRE in
``Sinai's regime''), see~\cite{CP,CP04}, and also~\cite{HS} for the continuous space 
and time version.

Now, we state the results we are going to prove in this paper. In addition to (\ref{def_kappa}), we will use the following weak integrability hypothesis:
\begin{equation}
\label{integ_hyp}
\text{there exists $\epsilon_0>0$ such that }  \EE[\rho_0^{-\epsilon_0}]<\infty.
\end{equation}

First, we discuss the results about quenched slowdown probabilities. It turns out that the quenched 
slowdown probabilities behave differently depending on whether one considers RWRE with or without
reflection at the origin. Also, it matters which of the following two events is considered: (i)
the position
of the particle at time~$n$ is at most $n^\nu$, $\nu<\kappa$ (i.e., the event $\{X_n<n^\nu\}$),
or (ii) the hitting time of~$n^\nu$ is greater than~$n$ (i.e., the event $\{T_{n^\nu}>n\}$). 
Here we prove that in all these cases the quenched probability of slowdown is roughly $e^{-n^\beta}$,
where $\beta=1-\frac{\nu}{\kappa}$ for the ``hitting time slowdown'' in the reflected case,
and $\beta=(1-\frac{\nu}{\kappa})\wedge\frac{\kappa}{\kappa+1}$ in the other cases.
More precisely, we have
\begin{theorem}
\label{t_q_slow}
{\bf Slowdown, quenched} Suppose that~(\ref{logrho}),~(\ref{def_kappa}) and~(\ref{integ_hyp}) hold. For $\nu \in (0, 1\wedge \kappa)$ 
the quenched slowdown probabilities behave in the following way. For the reflected RWRE,
\begin{align}
 \lim_{n\to\infty} \frac{\ln(-\ln \Pto[T_{n^{\nu}}>n])}{\ln n} &= 1-\frac{\nu}{\kappa},
\qquad \text{$\PP$-a.s.},\label{refl_hit_q_sl}\\
 \lim_{n\to\infty} \frac{\ln(-\ln \Pto[X_n<n^{\nu}])}{\ln n}
   &= \Bigl(1-\frac{\nu}{\kappa}\Bigr) \wedge \frac{\kappa}{\kappa+1},
\qquad \text{$\PP$-a.s.} \label{refl_loc_q_sl}
\end{align}
For the RWRE without reflection, we obtain
\begin{align}
 \lim_{n\to\infty} \frac{\ln(-\ln \Po[T_{n^{\nu}}>n])}{\ln n}
 &= \Bigl(1-\frac{\nu}{\kappa}\Bigr) \wedge \frac{\kappa}{\kappa+1},
\qquad \text{$\PP$-a.s.},\label{nonrefl_hit_q_sl}\\
 \lim_{n\to\infty} \frac{\ln(-\ln \Po[X_n<n^{\nu}])}{\ln n}
 &= \Bigl(1-\frac{\nu}{\kappa}\Bigr) \wedge \frac{\kappa}{\kappa+1},
\qquad \text{$\PP$-a.s.} \label{nonrefl_loc_q_sl}
\end{align}
\end{theorem}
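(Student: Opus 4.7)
The plan is to establish each of the four asymptotics by matching lower and upper bounds on the quenched probability, both of the form $\exp(-n^{\beta + o(1)})$ $\PP$-a.s., where $\beta$ is the claimed exponent. The fundamental tool is the potential $V(x) = \sum_{i=1}^{x}\ln\rho_i$ (extended to $x<0$ by $V(-k) = -\sum_{i=-k+1}^{0}\ln\rho_i$), which is a random walk with negative drift by (\ref{logrho}); under (\ref{def_kappa}) its positive ascents have tails of order $e^{-\kappa h}$, so a renewal/extreme-value argument shows that the tallest ``trap'' (a segment on which $V$ climbs by $h$ before coming down) in an interval of length $L$ has height $h_{\max}(L)\sim\kappa^{-1}\ln L$ $\PP$-almost surely. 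A trap of height $h$ produces a quenched delay whose distribution has an exponential tail of the form $\exp(-t\,e^{-h+o(1)})$, which is the common building block for all four estimates.

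First I would build the lower bounds on the slowdown probabilities by exhibiting explicit scenarios. For (\ref{refl_hit_q_sl}) I take the tallest trap of $[0,n^\nu]$, of height $h^*\sim(\nu/\kappa)\ln n$, and demand that the walk stay in it longer than $n$; this has quenched probability at least $\exp(-c\, n e^{-h^*}) = \exp(-c n^{1-\nu/\kappa})$. For the two non-reflected cases (\ref{nonrefl_hit_q_sl}) and (\ref{nonrefl_loc_q_sl}) I add a backtracking mechanism: the walk reaches $-L$ (quenched probability $\asymp\exp(-cL)$ from the standard ratio formula) and then gets stuck in the tallest trap of $[-L,0]$, of height $\sim\kappa^{-1}\ln L$; optimising $cL + n\,e^{-\kappa^{-1}\ln L}$ in $L$ selects $L\asymp n^{\kappa/(\kappa+1)}$ and gives the exponent $\kappa/(\kappa+1)$. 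For (\ref{refl_loc_q_sl}) negative excursions are forbidden, but instead the walk can excurse to $L=n^{\kappa/(\kappa+1)}$, be caught in the tallest trap of $[0,L]$ until a time close to $n$, then exit the trap on its left side (constant quenched probability, since for a typical trap the left wall is shorter than the right) and drift back below $n^\nu$; this again produces exponent $\kappa/(\kappa+1)$. Taking the more favourable of the two mechanisms yields the claimed minimum.

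For the matching upper bounds I would use exponential-moment estimates. By the strong Markov property, conditionally on $\omega$ the increments $T_{k+1}-T_k$ are independent, so $\Eto[\exp(\lambda T_{n^\nu})] = \prod_{k=0}^{\lfloor n^\nu\rfloor-1}\Eto[\exp(\lambda(T_{k+1}-T_k))]$; each factor is controlled by the local trap heights, and a Chebyshev argument with $\lambda=\lambda_n$ chosen adaptively gives $\Pto[T_{n^\nu}>n]\le\exp(-n^{1-\nu/\kappa-o(1)})$, finishing (\ref{refl_hit_q_sl}). For the remaining three equations one first partitions the slowdown event according to $\min_{k\le n}X_k$ (to handle backtracking) and, for the ``location'' statements, also $\max_{k\le n}X_k$ (to handle the return-from-$L$ scenario), applies the hitting-time bound on each piece, and optimises over the partition scale; the optimum again concentrates at scale $n^{\kappa/(\kappa+1)}$ and produces the extra $\kappa/(\kappa+1)$ contribution.

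The main obstacle, I expect, is the sharp quenched exponential-moment estimate on $T_{n^\nu}$ underlying all four upper bounds: it must be uniform over all traps of $[0,n^\nu]$ (not only the tallest), tight enough to match the single-trap heuristic, and valid $\PP$-almost surely despite rare extremal environment configurations. The integrability hypothesis (\ref{integ_hyp}) will be used to control the ``downhill'' stretches of $V$, and the extremal corrections will be absorbed by a truncation of $\ln\rho$ together with a Borel--Cantelli step; careful bookkeeping of polylogarithmic factors will be needed to keep the two mechanisms separate and to identify the optimal scale cleanly.
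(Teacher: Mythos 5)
Your proposal identifies the correct heuristics (Feller's theorem for trap heights, confinement cost $\sim e^{-t e^{-h}}$, backtracking to scale $n^{\kappa/(\kappa+1)}$) and the lower-bound scenarios match the paper's. For the upper bounds, however, your route is genuinely different from the paper's. You propose to bound $\Eto\bigl[\exp(\lambda T_{n^{\nu}})\bigr]=\prod_k\Eto\bigl[\exp(\lambda(T_{k+1}-T_k))\bigr]$ with an adaptive $\lambda_n$ and then apply Chebyshev. The paper instead decomposes $T_{n^{\nu}}=\Tinit+\Tdir+\Tback+\Tleft+\Tright$ (Section~\ref{s_induced_rw}), discretizes the valleys by depth class (Proposition~\ref{timedirect}), and, crucially, obtains quenched exponential tail bounds on valley crossing times from a spectral gap estimate based on Miclo's inequality (Proposition~\ref{spectral}); for the non-reflected case it also controls the number of backtracks $\Bk(n)$ rather than conditioning on $\min_k X_k$. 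Your per-site factorization can be made to work, but the key non-trivial input you gloss over (``each factor is controlled by the local trap heights'') is precisely what Proposition~\ref{spectral} delivers: absent uniform ellipticity, the exponential moment $\Eto[e^{\lambda(T_{k+1}-T_k)}]$ is not controlled by the local $\omega_k$ alone but by the deepest valley the walk can fall into en route, and the Miclo/Saloff-Coste spectral gap estimate is what turns the depth $H$ into a clean $e^{-t e^{-H}}$-type tail. Without an equivalent input your Chebyshev step would not close, so this is a hidden dependency rather than an independent route. What your approach buys is conceptual simplicity (one global $\lambda$ rather than per-depth-class rescalings) at the cost of a more delicate verification that the product of factors is $e^{o(\lambda n)}$, which requires the same count of valleys per depth class (the paper's Lemma~\ref{depthvalley1} and the event $B'(n,\nu,m)$).

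One small inaccuracy in your lower-bound argument for~(\ref{refl_loc_q_sl}): you claim the walk exits the confining trap on its left ``with constant quenched probability, since for a typical trap the left wall is shorter than the right.'' In the paper's valley decomposition the opposite is true: by construction $V(K_i)>V(K_{i+1})$ (the potential decreases valley to valley), so the left wall $V(K_i)-V(b_i)$ is the \emph{taller} one, and exiting left has quenched probability $\sim e^{-(H_L-H_R)}$, not a constant. This does not invalidate the scenario, because the paper does not rely on an exit event at all: on the environment event $F(n)$ one has $1-\omega_x\geq n^{-3/\epsilon_0}$ for $x\in[-n,n]$, so the walk can be forced to go left step by step, at total quenched cost at least $n^{-(3/\epsilon_0)L}$ over $L$ steps, which is $\exp(-n^{\kappa/(\kappa+1)}\cdot O(\ln n))$ and hence absorbed into the $o(1)$ of the exponent. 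You should replace the ``constant probability exit'' claim by this step-by-step march, or your bookkeeping will not match.
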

For a heuristical explanation of the reason for the different behaviours of
the quenched slowdown probabilities we refer to the beginning of Section~\ref{s_q_slowdown}.

For the annealed slowdown probabilities, we obtain that there is no difference between reflecting/nonreflecting cases (at least on the level of precision we
are working here) and also it
does not matter which one of the slowdown events $\{T_{n^{\nu}}>n\}$, $\{X_n<n^{\nu}\}$
one considers. In all these cases, the annealed probability of slowdown decays polynomially,
roughly as $n^{-(\kappa-\nu)}$:
\begin{theorem}
\label{t_a_slow}
{\bf Slowdown, annealed} Suppose that~(\ref{logrho}),~(\ref{def_kappa}) and~(\ref{integ_hyp}) hold. For $\nu \in (0, 1\wedge \kappa)$,
\begin{equation}
\label{a_slowdown}
 \lim_{n\to\infty} \frac{\ln \PA[X_n<n^{\nu}]}{\ln n}
       =\lim_{n\to\infty} \frac{\ln \PA[T_{n^{\nu}}>n]}{\ln n} = -(\kappa-\nu).
\end{equation}
The same result holds if one changes~$\PA$ to~$\PAt$ in~(\ref{a_slowdown}).
\end{theorem}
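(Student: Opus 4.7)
The unified intuition behind~(\ref{a_slowdown}) is that the annealed slowdown probability is driven by the presence of a single atypically strong ``trap'' of potential depth $h\approx\ln n$ somewhere inside $[0,n^\nu]$: such a trap has $\PP$-probability of order $n^{-\kappa}$ at a fixed location, there are $n^\nu$ admissible placements, and once inside it the walker needs time of order $e^h=n$ to escape, altogether giving the scale $n^{-(\kappa-\nu)}$. I would concentrate on the hitting-time quantity $\PA[T_{n^\nu}>n]$. The position event differs from $\{T_{n^\nu}>n\}$ only through $\{T_{n^\nu}\leq n,\,X_n<n^\nu\}$, which by transience (assumption~(\ref{logrho})) carries a super-polynomially small probability. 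The reflected case is handled identically since the bulk of the argument takes place on $[0,n^\nu]$.

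\textbf{Upper bound.} Write $T_{n^\nu}=\sum_{i=0}^{\lfloor n^\nu\rfloor-1}\tau_i$ with $\tau_i=T_{i+1}-T_i$, and use the classical annealed tail estimate $\PA[\tau_0>t]\asymp t^{-\kappa}$, of Kesten--Kozlov--Spitzer type, obtained from Kesten's renewal/perpetuity theorem under hypothesis~(\ref{def_kappa}). A union bound gives $\PA[\max_{i<n^\nu}\tau_i>n/2]\leq Cn^\nu\cdot n^{-\kappa}$. On the complementary event, integrating the tail yields $\EA[\tau_0\wedge n]=O(n^{1-\kappa})$, so the first-moment bound $\EA[T_{n^\nu};\max\tau_i\leq n/2]=O(n^{\nu+1-\kappa})$ combined with Markov's inequality gives the matching order $n^{-(\kappa-\nu)}$.

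\textbf{Lower bound.} Construct the trap explicitly. Set $h=\ln n$ and $L=C\ln n$. Cram\'er's theorem for $\sum_{i=1}^L\ln\rho_i$ with the exponential tilt $\kappa$, legitimate because $\EE[\rho_0^\kappa]=1$, shows that at any fixed $j$ the event $\sum_{i=j+1}^{j+L}\ln\rho_i\geq h$ has $\PP$-probability at least $cn^{-\kappa}/\sqrt{\ln n}$. Using $n^\nu/L$ non-overlapping windows and independence, $[0,n^\nu]$ contains such a rise with probability at least $c'n^{-(\kappa-\nu)}/(\ln n)^{3/2}$, which is $n^{-(\kappa-\nu)+o(1)}$. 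Conditional on this event, a standard Arrhenius / electric-network computation shows that the walker, once inside the trap, needs time at least $e^h=n$ to escape to the right with probability bounded away from zero; hypothesis~(\ref{integ_hyp}) enters here to rule out degenerate small $\omega_i$ in the entry portion of the trap.

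\textbf{Main obstacle.} The delicate point is the upper bound in the non-reflected case, because the sequence $(\tau_i)$ is no longer annealed-independent: before reaching level $n^\nu$ the walker may freely re-cross any bond from both sides. I would bypass this by inserting regeneration levels (in the spirit of Sznitman--Zerner and Kesten--Kozlov--Spitzer, whose integrability is secured by~(\ref{integ_hyp})), or by coupling with the reflected walk and bounding the extra time spent in $\Z_-$ through a geometric series in the probability of ever visiting deep negative sites. Since~(\ref{a_slowdown}) asks only for a logarithmic-level limit, two-sided tail bounds up to multiplicative constants suffice, and the moderate-deviation picture for sums of heavy-tailed i.i.d.\ contributions (Nagaev / Heyde) closes the argument.
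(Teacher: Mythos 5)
Your lower bound is essentially the paper's: plant a trap of depth $\approx\ln n$ somewhere in $[0,n^\nu]$ and observe that the walk then needs time of order $n$ to clear it. The paper implements this through the event $A_+'(n,\nu,\epsilon)$, whose $\PP$-probability is estimated in Lemma~\ref{A2} using the Feller--Iglehart asymptotic~(\ref{fellerthm}) rather than a Cram\'er tilt, but the two estimates are interchangeable here, and the ``cost-to-climb'' bound you invoke is precisely Proposition~\ref{cost_to_climb}. For the upper bound you take a genuinely different route: you work on the trajectory side, writing $T_{n^\nu}=\sum\tau_i$ and importing a KKS/Kesten-type annealed tail $\PA[\tau_0>t]\asymp t^{-\kappa}$, while the paper works on the environment side, bounding the annealed probability by $\PP[A'(n,\nu,-1/m)]$ plus a quenched stretched-exponential bound valid whenever no trap deeper than $(1-1/m)\ln n$ is present. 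The paper's route is self-contained (it never needs the annealed tail of $\tau_0$) and, crucially, works for all $\kappa>0$; yours needs external input and, as written, does not.

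There are two concrete gaps. First, your reduction of the position event to the hitting-time event is wrong: $\{T_{n^\nu}\leq n,\ X_n<n^\nu\}$ is not super-polynomially small. Indeed it contains $\{T_{n^\nu}=n-1\}\cap\{X_n=n^\nu-1\}$, whose annealed probability is at least a constant times $\PA[T_{n^\nu}=n-1]$, which is itself of polynomial order. The correct reduction (the one the paper uses in Section~\ref{s_upper_a_sl}) is $\{X_n<n^\nu\}\subset\{T_{n^{(1+1/m)\nu}}>n\}\cup\{\Bk(n)\text{ large}\}$, with the backtracking event killed by~(\ref{boundback}); a variant you could use is $\{X_n<n^\nu\}\subset\{T_{2n^\nu}>n\}\cup\{T_{2n^\nu}\leq n,\,X_n<n^\nu\}$, where the second event forces a backward displacement of $n^\nu$ and is controlled by~(\ref{b*}). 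Second, the Markov-inequality step in your upper bound yields $\PA[T_{n^\nu}>n,\max\tau_i\leq n/2]\leq n^{\nu-1}\EA[\tau_0\wedge n/2]$, which is $O(n^{\nu-\kappa})$ only when $\kappa<1$; for $\kappa>1$ one gets $O(n^{\nu-1})$, strictly larger than the target $n^{-(\kappa-\nu)}$. Since Theorem~\ref{t_a_slow} is explicitly claimed for all $\nu\in(0,1\wedge\kappa)$, including the ballistic regime $\kappa>1$, this is a real gap. You gesture at the fix yourself in the final paragraph (Nagaev/Heyde ``one big jump'' for heavy-tailed sums plus regeneration structure to restore independence of the $\tau_i$), but that is precisely the nontrivial part, and it replaces, rather than supplements, the Markov step. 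The paper sidesteps both issues by never touching the annealed law of $\tau_0$: on $A'(n,\nu,-1/m)^c\cap A(n)\cap B'(n,\nu,m)$ the quenched probability is at most $\exp(-n^{1/(4m)})$, uniformly over $\omega$, so the whole annealed cost is carried by the environment event $A'(n,\nu,-1/m)$, and the valley-decomposition machinery of Sections~\ref{s_confinement}--\ref{s_q_slowdown} does all the work.
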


In the case of RWRE on~$\Z$ (i.e., without reflection at the origin) there
is another kind of untypically slow escape to the right. Namely, before going to $+\infty$,
the particle can make an untypically big excursion to the left of the origin.
While it is easy to control the distribution of the leftmost site touched by this
excursion (e.g., by means of the formula~(\ref{exit_probs}) below), it is
interesting to study the probability that at time~$n$ the particle is far away
to the left of the origin:
\begin{theorem}
\label{t_backtrack}
{\bf Backtracking} Suppose that~(\ref{logrho}),~(\ref{def_kappa}) and~(\ref{integ_hyp}) hold. For $\nu \in (0, 1)$, we have
\begin{align}
 \lim_{n\to\infty} \frac{\ln(-\ln \Po[X_n<-n^{\nu}])}{\ln n} &= \nu \vee \frac{\kappa}{\kappa+1},
 \qquad \text{$\PP$-a.s.}
  \label{q_backtrack}\\
 \lim_{n\to\infty} \frac{\ln(-\ln \PA[X_n<-n^{\nu}])}{\ln n} &= \nu,  
  \label{a_backtrack}
\end{align}
and
\begin{equation}
  \label{backtrack_T}
 \lim_{n\to\infty} \frac{\ln(-\ln \PA[T_{-n^{\nu}}<n])}{\ln n} 
   = \lim_{n\to\infty} \frac{\ln(-\ln \Po[T_{-n^{\nu}}<n])}{\ln n} = \nu
\qquad \text{$\PP$-a.s.}
\end{equation}
\end{theorem}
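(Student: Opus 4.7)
My approach exploits the explicit formula for the one-dimensional quenched hitting probability. Set $V(0)=0$, $V(k)=\sum_{j=1}^{k}\ln\rho_j$ for $k\ge 1$, and $V(k)=-\sum_{j=k+1}^{0}\ln\rho_j$ for $k\le -1$. Then
\[
\Po[T_{-m}<\infty]=\frac{\sum_{k\ge 0}e^{V(k)}}{\sum_{k\ge -m}e^{V(k)}}.
\]
By~(\ref{logrho}) the numerator is $\PP$-a.s.\ finite, and in the denominator the contribution from $[-m,-1]$ is at least $\exp(M_m)$, where $M_m:=\max_{-m\le k\le -1}V(k)$ is the running maximum of a random walk with positive mean $-\EE[\ln\rho]>0$. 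The strong law then gives $M_m=(-\EE[\ln\rho])m+o(m)$ $\PP$-a.s., so that $\Po[T_{-m}<\infty]=e^{-cm+o(m)}$ $\PP$-a.s.\ with $c:=-\EE[\ln\rho]$. Averaging over $\omega$ and splitting on the exponentially rare Cram\'er event $\{M_m\le cm/2\}$ also yields $\PA[T_{-m}<\infty]\le e^{-c'm}$.

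For~(\ref{backtrack_T}), only the matching lower bound remains. Conditioning on $\{T_{-n^\nu}<\infty\}$ produces a Doob $h$-transformed nearest-neighbour chain with strictly leftward ballistic drift, which reaches $-n^\nu$ in time $O(n^\nu)=o(n)$ since $\nu<1$. Hence $\Po[T_{-n^\nu}<n]\ge\Po[T_{-n^\nu}<\infty](1-o(1))=e^{-cn^\nu+o(n^\nu)}$ $\PP$-a.s., and the annealed analogue follows by restricting to typical environments on which $M_{n^\nu}\le 2cn^\nu$.

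For~(\ref{a_backtrack}), the upper bound is immediate from $\{X_n<-n^\nu\}\subset\{T_{-n^\nu}<n\}$ and~(\ref{backtrack_T}). For the lower bound I would combine, via the strong Markov property at $T_{-n^\nu-1}$, the event that the walk reaches $-n^\nu-1$ by time $n/2$ (annealed cost $e^{-cn^\nu}$) with the event that from there it fails to come back to $-n^\nu$ within $n/2$ steps; the latter, after shifting the environment, is the annealed slowdown probability $\PA[T_1>n/2]$, which by Theorem~\ref{t_a_slow} decays only polynomially in $n$. Multiplying gives $\PA[X_n<-n^\nu]\ge e^{-cn^\nu+O(\ln n)}$, so the rate is $\nu$.

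The most delicate case is~(\ref{q_backtrack}). For the upper bound on the probability I combine two complementary inclusions: $\{X_n<-n^\nu\}\subset\{T_{-n^\nu}<n\}$, which by the quenched part of~(\ref{backtrack_T}) gives rate $\ge\nu$; and $\{X_n<-n^\nu\}\subset\{X_n<n^{\nu_0}\}$ for any $\nu_0>0$, which by~(\ref{nonrefl_loc_q_sl}) and letting $\nu_0\downarrow 0$ gives rate $\ge\kappa/(\kappa+1)$. For the matching lower bound I would construct the event ``reach $-2n^\nu$ by time $n/2$ and stay below $-n^\nu$ during $[n/2,n]$''; the first factor has quenched probability at least $e^{-Cn^\nu}$ by the potential estimate above, while the second is produced by a quenched slowdown trap placed in the window $[-2n^\nu,-n^\nu]$, exactly as in the proof of~(\ref{nonrefl_loc_q_sl}) but on a shifted window. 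By independence of the environment on disjoint intervals and a Borel--Cantelli argument, such a trap exists $\PP$-a.s.\ for all large~$n$, and multiplying yields $\Po[X_n<-n^\nu]\ge e^{-Cn^{\nu\vee\kappa/(\kappa+1)}}$. The main technical hurdle is precisely this last step: transplanting the trap construction from Section~\ref{s_q_slowdown} to a shifted window and verifying via the strong Markov property that the backtracking and trapping costs genuinely multiply.
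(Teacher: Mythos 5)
Your quenched upper bound on the probability, $\Po[T_{-n^\nu}<n]\leq\Po[T_{-n^\nu}<\infty]$ together with the strong law for $\max_{-n^\nu\leq k\leq -1}V(k)$, is correct and plays the same role as the paper's reversibility argument. The genuine gap is in your matching lower bound. You assert that the Doob $h$-transformed chain with $h(x)=\Po^x[T_{-n^\nu}<\infty]$ has ``strictly leftward \emph{ballistic} drift'' and thus hits $-n^\nu$ in time $O(n^\nu)=o(n)$. The leftward drift is real, but the conditioned chain is \emph{not} ballistic when $\kappa<1$. Writing out the conditioned potential $\tilde V(x)=V(x)-\ln h(x)-\ln h(x+1)+\mathrm{const}$ and using $\ln h(x)\approx\max_{k\geq x}V(k)+\mathrm{const}$, one sees that on any stretch where $\max_{k\geq x}V(k)$ is essentially constant (i.e.\ inside a valley of $V$ dominated by a single local maximum), $\tilde V(x)\approx V(x)+\mathrm{const}$: the conditioned chain faces barriers of exactly the same height as the original chain does. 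Its quenched traversal time of a distance $m$ is therefore of order $m^{1/\kappa}$ up to polylogarithms, so for $\nu\in(\kappa,1)$ (hence $\kappa<1$) one has $n^{\nu/\kappa}\gg n$ and $\Po[T_{-n^\nu}<n\mid T_{-n^\nu}<\infty]$ is a super-polynomially small speedup probability, not $1-o(1)$. The final exponent $\nu$ does survive, because the speedup exponent $\frac{\nu-\kappa}{1-\kappa}$ is $<\nu$; but to make this rigorous you would need a speedup theorem for the $h$-transformed (or time-reversed) chain, a substantial extra piece of work not present in your sketch. The paper bypasses all of this with a one-line bound: on $F(n)$ (Lemma~\ref{easy_backtrack}) the walk can simply step left $\lfloor n^\nu\rfloor$ times, giving $\Po[T_{-n^\nu}<n]\geq n^{-(3/\epsilon_0)n^\nu}$, which is already sharp at the level of $\ln(-\ln\cdot)/\ln n$.

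A secondary issue concerns your lower bound for~(\ref{q_backtrack}). You place the slowdown trap in the window $[-2n^\nu,-n^\nu]$, which has width $n^\nu$; the deepest trap inside it has depth only about $\frac{\nu}{\kappa}\ln n$, so the confinement cost is $\exp(-n^{1-\nu/\kappa})$. When $\nu<\frac{\kappa}{\kappa+1}$, the exponent $1-\frac{\nu}{\kappa}$ strictly exceeds $\frac{\kappa}{\kappa+1}$, so this strategy does not achieve the claimed rate $\nu\vee\frac{\kappa}{\kappa+1}$. What the paper actually does in Section~\ref{s_q_backtracking_rw} is to go to a valley inside $[-n^{a_0},0]$ with $a_0=\nu\vee\frac{\kappa}{\kappa+1}$, stay there until about time $n-n^{a_0}$, and then drift left for another $n^{a_0}$ steps (using $\omega\in F(n)$) so that $X_n\leq -n^{a_0}\leq -n^\nu$; both the travel cost and the confinement cost then have exponent $a_0$, as required.
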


Another kind of deviation from the typical behaviour is the speedup of the particle,
i.e., at time~$n$ the particle is at a distance larger than~$n^\kappa$ from
the origin (here we of course assume that $\kappa<1$). There are results in the literature that cover the 
\emph{large deviations} case, i.e., the case when at time~$n$ the particle is at distance~$O(n)$
from the origin, see e.g.\ Section~2.3 of~\cite{Zeitouni}, or~\cite{CGZ}.
In this paper we are interested in the probabilities of moderate speedup:
the displacement of the particle is sublinear, but still bigger than in the typical case.
Namely, we show that the quenched probability that~$X_n$ is of order~$n^\nu$,
$\kappa<\nu<1$, is roughly $e^{-n^\beta}$, where $\beta=\frac{\nu-\kappa}{1-\kappa}$.
It is remarkable that the annealed probability is roughly of the same order.
More precisely, we are able to prove the following result:
\begin{theorem}
\label{t_speedup}
{\bf Speedup} Suppose that~(\ref{logrho}),~(\ref{def_kappa}) and~(\ref{integ_hyp}) hold. For $\nu \in (\kappa,1)$ we can control the probabilities of the moderate speedup in the following way:
\begin{equation}
\label{eq_q_speed}
 \lim_{n\to\infty} \frac{\ln(-\ln \Po[X_n> n^{\nu}])}{\ln n} 
   = \lim_{n\to\infty} \frac{\ln(-\ln \Po[T_{n^{\nu}}< n])}{\ln n} 
   = \frac{\nu-\kappa}{1-\kappa},\qquad \text{$\PP$-a.s.},
\end{equation}
and
\begin{equation}
\label{eq_a_speed}
 \lim_{n\to\infty} \frac{\ln(-\ln \PA[X_n > n^{\nu}])}{\ln n} 
  = \lim_{n\to\infty} \frac{\ln(-\ln \PA[T_{n^{\nu}} < n])}{\ln n} = \frac{\nu-\kappa}{1-\kappa}.
\end{equation}
The same result holds for the RWRE with reflection at the origin.
\end{theorem}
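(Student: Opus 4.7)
The plan is based on the ``trap picture'' of sub-ballistic one-dimensional RWRE: the hitting time $T_m$ is essentially the sum of times spent escaping the potential wells (``traps'') encountered along $[0,m]$, and the depth $D$ of a trap has a regularly varying tail $\PP[D>h]\sim c\,h^{-\kappa}$ inherited from~(\ref{def_kappa}). To traverse $n^\nu$ sites in time $n\ll n^{\nu/\kappa}$, these exit times must cooperatively be atypically small. Matching the truncated mean $n^\nu h^{1-\kappa}$ with $n$ identifies the natural depth cutoff $h_n:=n^{(1-\nu)/(1-\kappa)}$, and the $\PP$-probability that every trap in $[0,n^\nu]$ has depth at most $h_n$ is $\asymp\exp(-n^\nu h_n^{-\kappa})=\exp\bigl(-n^{(\nu-\kappa)/(1-\kappa)}\bigr)$, precisely the claimed rate.

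For the \emph{quenched upper bound} on $\Po[T_{n^\nu}\leq n]$, I decompose $T_{n^\nu}=\sum_{i=0}^{\lfloor n^\nu\rfloor-1}\tau_i$ into $\Po$-independent crossings by the strong Markov property, with Laplace transforms $\varphi_i(\lambda):=\Eo^i[e^{-\lambda\tau_i}]$ explicit in the environment, and apply the exponential Chebyshev inequality:
\[
\Po[T_{n^\nu}\leq n]\,\leq\, e^{\lambda n}\prod_{i=0}^{\lfloor n^\nu\rfloor-1}\varphi_i(\lambda).
\]
The key analytic input is $\EE[1-\varphi_0(\lambda)]\asymp \lambda^\kappa$ as $\lambda\downarrow 0$, the infinitesimal form of the Kesten-type tail of~$\tau_0$ under~(\ref{def_kappa}). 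An $L^2$/ergodic argument, together with a truncation of the heavy-tailed summands at level $h_n$, yields $\sum_{i=0}^{\lfloor n^\nu\rfloor-1}(1-\varphi_i(\lambda))\sim c\,n^\nu\lambda^\kappa$ for $\PP$-a.a.~$\omega$, uniformly over $\lambda=\lambda(n)$ in the relevant range. Optimizing $\exp(\lambda n-c\,n^\nu\lambda^\kappa)$ selects $\lambda^\star\asymp n^{(\nu-1)/(1-\kappa)}$ and gives the exponent $(\nu-\kappa)/(1-\kappa)$; the bound for $\Po[X_n>n^\nu]$ then follows from $\{X_n>n^\nu\}\subseteq\{T_{n^\nu}\leq n\}$.

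For the matching \emph{quenched lower bound}, I single out the ``deep'' traps in $[0,n^\nu]$, namely those of depth in $[\epsilon h_n,\,n^{\nu/\kappa+o(1)}]$ for a small fixed $\epsilon>0$; an SLLN on the corresponding Bernoulli indicators shows that their number is $K_n\asymp\epsilon^{-\kappa}n^{(\nu-\kappa)/(1-\kappa)}$ for $\PP$-a.a.~$\omega$. Forcing the walk to exit each deep trap of depth $D$ within an allocated time budget $\asymp h_n$ costs probability at least $c\,h_n/D$ by the elementary estimate $\Po[\tau_i\leq t]\geq c\,t/D$; the product over deep traps equals $\exp\bigl(K_n\ln h_n-\sum\ln D\bigr)=\exp\bigl(-n^{(\nu-\kappa)/(1-\kappa)+o(1)}\bigr)$ via an SLLN for $\sum\ln D$ (since $\EE[\ln(D/h)\mid D>h]=1/\kappa$ by the Pareto-tail structure), while the shallow portion of the interval is crossed in time $O(\epsilon^{1-\kappa}n)$ with probability bounded below by a direct second-moment estimate. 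The \emph{annealed} bounds then follow: the upper by integrating the quenched bound, the lower from the $\PP$-probability of the ``all traps shallow on $[0,n^\nu]$'' event together with a concentration step under that environment. The passage between the $T_{n^\nu}$- and $X_n$-formulations at the $\log\log$ level is routine via transience, and the RWRE reflected at~$0$ is handled identically since the reflection affects only one transition. The \emph{main obstacle} is the uniform-in-$\lambda$ SLLN for $\sum(1-\varphi_i(\lambda))$, which requires the truncation level $h_n$ to be carefully calibrated with the optimizing~$\lambda^\star$.
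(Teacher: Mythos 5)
Your plan gives the correct exponent $\frac{\nu-\kappa}{1-\kappa}$ and the trap picture is the right one, but the route is genuinely different from the paper's, and some steps you treat as ``elementary'' are, under the paper's mild assumption~(\ref{integ_hyp}) (no uniform ellipticity), not elementary at all. For the \emph{upper bound on} $\Po[T_{n^\nu}<n]$ the paper does not use Laplace transforms or exponential Chebyshev at all. It instead counts the deep valleys in $[0,n^\nu]$: by Lemma~\ref{depthvalley2} there are, with overwhelming $\PP$-probability, at least $\tfrac13 n^{\nu-\alpha_0-\epsilon}$ valleys of depth $\geq\frac{\alpha_0+\epsilon}{\kappa}\ln n - 4\ln\ln n$, where $\alpha_0=\kappa\frac{1-\nu}{1-\kappa}$ so that $\nu-\alpha_0=\frac{\nu-\kappa}{1-\kappa}$. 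By Proposition~\ref{cost_to_climb} each such valley is crossed in time $s_0\asymp n^{(\alpha_0+\epsilon)/\kappa}/(\ln n)^4$ with quenched probability at most $1/2$; since the crossings are independent, and since fitting $T_{n^\nu}<n$ forces a $2/3$-fraction of these crossings to be quick, a binomial large-deviation bound yields $\Po[T_{n^\nu}<n]\leq\exp(-C n^{\nu-\alpha_0-\epsilon})$. This bypasses the uniform-in-$\lambda$ SLLN for $\sum(1-\varphi_i(\lambda))$ that you correctly flag as the hard part: the paper only needs a counting lemma plus the reversibility-based escape bound of Proposition~\ref{cost_to_climb}. For the \emph{lower bound}, your structure (isolate deep traps, allot them a time budget) is close in spirit, but the estimate $\Po[\tau_i\leq t]\geq c\,t/D$ is not something the paper proves or uses, and it is not ``elementary'' without uniform ellipticity; making it rigorous for all relevant environments would require its own lemma. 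The paper instead partitions the traps into $m$ depth-classes (sets $\II_k$, $\UU$), bounds the contributions of the shallow classes by Proposition~\ref{crossingtime}, and for the $\leq n^{(\nu-\kappa)/(1-\kappa)+\alpha_0/m}$ deepest traps uses the crude ``go straight right'' estimate $\Po[\sigma_\ell\leq \tfrac12 n^{1-(\nu-\alpha_0)-\alpha_0/m}]\geq n^{-C(\ln n)^2}$ (valid on $F(n)\cap G_1(n)$). Because the answer only needs to hold at $\log\log$ precision, the extra factor $(\ln n)^3 n^{\alpha_0/m}$ disappears after sending $m\to\infty$. Your annealed step and the $X_n$-vs.-$T_{n^\nu}$ transfer agree with the paper's; the main gaps are (i) the unproved lower escape estimate, and (ii) the uniform-in-$\lambda$ concentration which the paper avoids by taking the much more direct counting argument.
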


For the case $\kappa\in (0,1)$,
the quenched moderate deviations for the random walk on~$\Z$ are well 
summed up by the plot of the following function on Figure~\ref{f_exponent}: 
\[
f(\nu)=\left\{
\begin{array}{ll}
\lim_{n\to\infty} \ln(-\ln \Po[X_n<-n^{-\nu}])/\ln n, & \text{ if } \nu\in (-1,0],\\ 
\lim_{n\to\infty} \ln(-\ln \Po[X_n<n^{\nu}])/\ln n,  & \text{ if } \nu\in (0,\kappa), \\ 
\lim_{n\to\infty} \ln(-\ln \Po[X_n>n^{\nu}])/\ln n,  & \text{ if } \nu\in [\kappa, 1). \\ 
\end{array}
\right.
\]
\begin{figure}[htb]
\centering
\includegraphics[width=12cm]{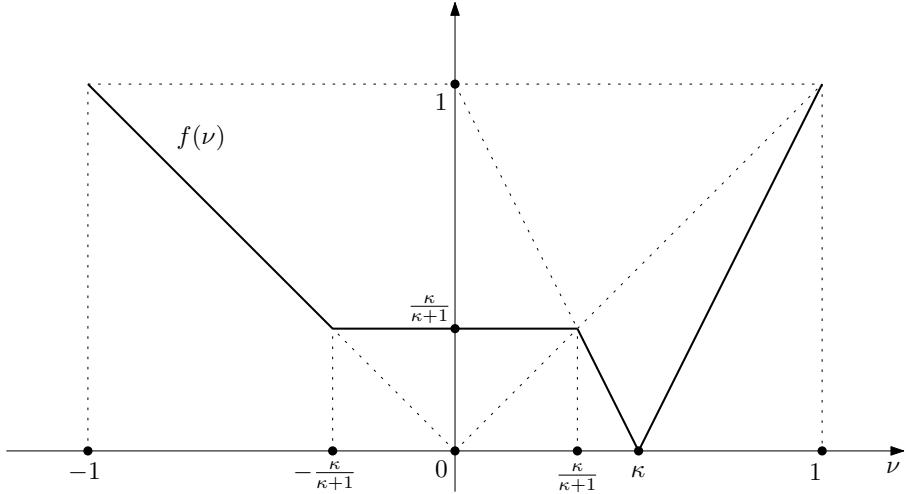}
\caption{The plot of $f(\nu)$, $-1<\nu< 1$}
\label{f_exponent}
\end{figure}

The rest of this paper is organized in the following way. In Section~\ref{s_notations} 
we give the (standard) definition of the potential and the reversible measure for the RWRE.
We then decompose the environment into a sequence of valleys. In this decomposition the valleys 
do not only depend on the environment but the construction is time-dependent.
Also, we derive some basic facts about the valleys needed later. In Section~\ref{s_estimates_env}
we mainly study the properties of that sequence of valleys. In Section~\ref{s_confinement},
we recall some results concerning the spectral properties of RWRE restricted to a
finite interval, and then obtain some bounds on the probability of confinement
in a valley. In Section~\ref{s_induced_rw} we define the induced random walk
whose state is the current valley (more precisely, the last visited 
boundary between two neighbouring valleys) where the particle is located.
Theorems~\ref{t_q_slow}, \ref{t_a_slow}, \ref{t_backtrack}, \ref{t_speedup}
are proved in Sections~\ref{s_q_slowdown}, \ref{s_a_sl}, \ref{s_backtracking},
\ref{s_speedup} respectively. We denote by $\gamma, {\gamma}_0, {\gamma}_1, {\gamma}_2, {\gamma}_3,\ldots$ the ``important'' constants (those that can be used far away from the place where they appear for the first time), and by $C_1,C_2,C_3,\ldots$ the ``local'' 
ones (those that are used only in a small neighbourhood of the place where they appear for the first time), restarting the numeration at the beginning of each section in the latter case. 
All these constants are either universal or depend only on the law
of the environment.

\section{More notations and some basic facts}
\label{s_notations}
An important ingredient of our proofs is the analysis of the {\sl potential} associated with the environment, which was
introduced by Sinai in \cite{Sinai}. The potential, 
denoted by $V= (V(x), \; x\in \Z),$ is a function of
the environment~$\omega$. It is defined in the following way:
\[
V(x) :=\left\{\begin{array}{ll} 
          \sum_{i=1}^x \ln \rho_i, & \text{if } x \geq 1, \\
 0\vphantom{\sum^N}, &  \text{if }  x=0, \\
-\sum_{i=x+1}^0 \ln \rho_i, &\text{if } x\leq -1,
\end{array}\right.
\]
so it is a random walk with negative drift, because ${\bf E}[\ln \rho_0]<0$. This notation is extended on $\R$ by $V(x):=V(\lfloor x \rfloor)$. We also define a reversible measure
\begin{equation}\label{pidef}
\pi(x) :=e^{-V(x)}+e^{-V(x-1)},\qquad \text{for $x\in \Z$,}
\end{equation}
(one easily verifies that $\omega_x\pi(x)=(1-\omega_{x+1})\pi(x+1)$ for all~$x$). 
We will also use the notation
$\pi([x,y])=\sum_{i=\lfloor x \rfloor  -1}^{\lfloor  y \rfloor } \pi (i)$, for $x<y$ two real numbers.

The function~$V(\cdot)$ enables us to define the valleys, parts of the environment which acts as traps for the random walk. The valleys are 
responsible for the sub-ballistic behaviour and hence play a central role for slowdown and speedup phenomena. 

We define by induction the following environment dependent sequence $(K_i(n))_{i \geq 0}$ by
\begin{align*}
K_0(n) =& -n,\\ 
K_{i+1}(n) =& \min \Bigl\{ j \geq K_i(n):  V(K_i(n))-\min_{k\in[K_i(n),j]}V(k)\geq \frac 3{1\wedge \kappa}\ln n,\\  
& \qquad \qquad \qquad \qquad \qquad \qquad \qquad \qquad \qquad \qquad V(j)=\max_{k\geq j} V(k) \Bigr\}.
\end{align*}
The dependence with respect to $n$ will be frequently omitted to ease the notations. The portion of the environment $[K_i,K_{i+1})$ is called the $i$-th valley, and we will prove that for $n$ large enough the valleys are descending 
in the sense that $V(K_{i+1})<V(K_i)$ for all $i\in[0,n]$. We associate to the $i$-th valley the bottom point 
\[
b_i=\inf\Bigl\{x \in [K_{i},K_{i+1}): V(x)= \min_{y\in [K_{i},K_{i+1})} V(y)\Bigr\},
\]
and the depth 
\begin{align*}
H_i&=\max_{x\in [K_{i},K_{i+1})}\Bigl(\max_{y\in [x,K_{i+1})} V(y)-\min_{y\in [K_{i},x)} V(y)\Bigr)\\
 & = \max_{K_i(n)\leq j<k<K_{i+1}(n)} \bigl(V(k)-V(j)\bigr),
\end{align*}
see Figure~\ref{f_valleys}.
\begin{figure}
\centering
\includegraphics[width=12cm]{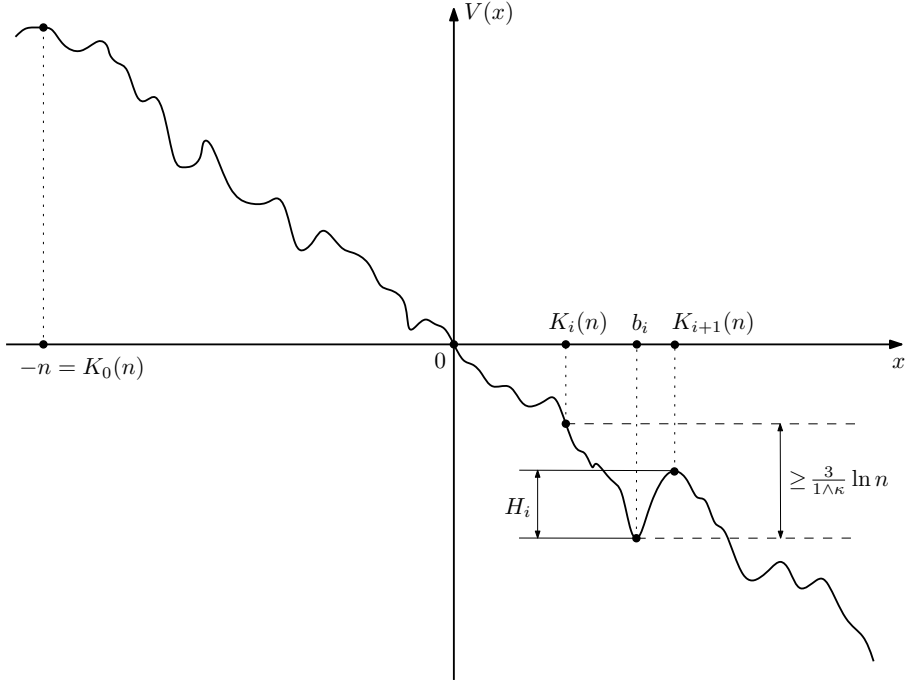}
\caption{On the definition of the sequence of valleys}
\label{f_valleys}
\end{figure}

Let us denote 
\begin{equation}
\label{Ndef}
N_n(m,m')=\{ i\geq 1: \ [K_{i},K_{i+1})\cap [\lfloor m\rfloor, \lfloor m'\rfloor) \neq \emptyset\}
\end{equation}
 and again we will often omit the index $n$. Let us emphasize that we do not include the valley of index 0, which is different from the others because of border issues.

The valleys for $i\geq 1$ are non-overlapping parts of~$\Z$, for any value of~$n$. Moreover the potential in the valleys are i.i.d.~up to space-shift, 
in the sense that for any~$n$ and~$i\geq 1$ 
the sequence of vectors of random length
$ \bigl(V(j)-V(K_{i+1}(n)),j=K_i(n),\ldots,K_{i+1}(n)-1\bigr)$,
$i\geq 1$,
is i.i.d.

We introduce the two following indices which will be used regularly
\begin{equation}
\label{indices}
i_0=\card N(-n,0) \ \text{ and }\  i_1=\card N(-n,n^{\nu}).
\end{equation}

To carry over the proofs easily to the reflected case, we introduce the following notation
\begin{equation}
\label{tilddef}
\widetilde{K}_{i_0}=0 \ \text{ and }\ \widetilde{K}_i=K_i \text{ for $i\geq i_0$}.
\end{equation}

We can estimate the depth of the valleys using a result of renewal theory which concerns the maximum of random walks with negative drift. We refer to~\cite{Feller} for a detailed introduction to renewal theory. Denoting $S=\max_{i\geq 0} V(i)$, under assumptions 
(\ref{logrho}), (\ref{def_kappa}) and~$(iii)$ of Theorem~\ref{kks}, we have
\begin{equation}
\label{fellerthm_nlattice}
\PP[S>h] \sim C_F \, e^{-\kappa h},\qquad \qquad h\to \infty,
\end{equation}
which is a result due to Feller which can be found in this form in~\cite{igle}.

If $(iii)$ in Theorem~\ref{kks} fails, $\ln \rho_0$ is concentrated on $\lambda \Z$ for some~$\lambda>0$, so that $V(\cdot)$ is a Markov chain with i.i.d.~increments of law $\ln \rho_i$. In this case, under our assumptions~(\ref{logrho}) and~(\ref{def_kappa}) we can use a result in~\cite{Spitzer} (p.~218) stating the discrete version of the previous equation. In the case of an aperiodic Markov chain we have
\begin{equation}
\label{fellerthm_lattice}
\PP[S\geq n \lambda] \sim C_F' \, e^{-\kappa \lambda n}, \qquad \qquad n\to \infty,
\end{equation}
and in the general case we obtain similar asymptotics by noticing that $(V(nd+k))_{n\geq 0}$ is aperiodic for $k\in [0,d-1]$ and $d$ the period of $V(\cdot)$ (which is well defined and finite by $(i)$ and $(ii)$).

Hence we can easily deduce from the assumptions~(\ref{logrho}) and~(\ref{def_kappa}) and  equations~(\ref{fellerthm_nlattice}) 
and~(\ref{fellerthm_lattice}) that
\begin{equation}
\label{fellerthm}
\PP[S>h]=\Theta(e^{-\kappa h}),
\end{equation}
where $f(n)=\Theta(g(n))$ means that $f(n)=O(g(n))$ and $g(n)=O(f(n))$.

Let us recall also the following basic fact.
For any integers $a<x<b$, the (quenched) probability for RWRE to reach~$b$ 
before~$a$ starting from~$x$ can be easily computed:
\begin{equation}
\label{exit_probs}
\Po^x[T_b < T_a] = \frac{ \sum_{y=a}^{x-1}
  e^{V(y)}}{ \sum_{y=a}^{b-1}  e^{V(y)}},
\end{equation}
see e.g.\ Lemma~1 in~\cite{Sinai} or formula~(2.1.4) in~\cite{Zeitouni}.


\section{Estimates on the environment}
\label{s_estimates_env}
Let us introduce the event
\begin{equation}
\label{Adef}
A(n)= \Bigl\{ \max_{i\leq 2n} (K_{i+1}-K_i) \leq (\ln n)^2\Bigr\}.
\end{equation}

The following lemma shows that the valleys are not very wide.
\begin{lemma}
\label{widthvalley}
We have
\[
\PP[A(n)^c]=O\Bigl(\frac 1 {n^2}\Bigr).
\]
\end{lemma}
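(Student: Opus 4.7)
The strategy is a union bound over the $O(n)$ valleys indexed by $i\leq 2n$, combined with a super-polynomial tail estimate on the width of a single valley. Writing
\[
\PP[A(n)^c] \;\leq\; \sum_{i=0}^{2n}\PP\bigl[K_{i+1}-K_i > (\ln n)^2\bigr],
\]
and using the i.i.d.~structure of consecutive valleys for $i\geq 1$ noted just after~(\ref{indices}) (the boundary term $i=0$, where $K_0=-n$ is deterministic, being handled by an essentially identical argument), it suffices to show that $\PP[K_2-K_1>(\ln n)^2]=o(n^{-3})$.

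I would decompose $K_2 - K_1 = (D - K_1) + (K_2 - D)$, where
\[
D := \min\Bigl\{j \geq K_1 : V(K_1) - \min_{k\in[K_1,j]} V(k) \geq \tfrac{3}{1\wedge\kappa}\ln n\Bigr\}
\]
is the first point at which $V$ has descended by $h := \tfrac{3}{1\wedge\kappa}\ln n$ below $V(K_1)$, and bound each piece separately. For the descent $D-K_1$: assumptions~(\ref{def_kappa}) and~(\ref{integ_hyp}) imply that $\varphi(t):=\EE[\rho_0^t]$ is finite on $(-\epsilon_0,\kappa)$, with $\varphi'(0)=\EE[\ln\rho_0]<0$, so there exists $t_0>0$ with $\varphi(t_0)<1$. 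A Chernoff bound on the partial sums of $\ln\rho_i$ then yields
\[
\PP\bigl[D-K_1>\tfrac{1}{2}(\ln n)^2\bigr] \;\leq\; e^{t_0 h}\,\varphi(t_0)^{(\ln n)^2/2} \;=\; \exp\bigl(-c(\ln n)^2+O(\ln n)\bigr),
\]
which is super-polynomial in $n$.

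For the waiting time $K_2-D$ until the next right-record of $V$, I would dominate $K_2$ by the location $J$ of the (last) maximum of $V$ on $[D,\infty)$: $J$ is automatically a right-record and inherits the descent condition from $D$. If $J-D>L:=\tfrac{1}{2}(\ln n)^2$ then $V(J)\geq V(D)$ and $V(J)=V(D+L)+\sup_{k\geq 1}W(k)$, where $W(k):=V(D+L+k)-V(D+L)$ is (by the strong Markov property of the environment) a fresh copy of the potential independent of $\mathcal{F}_{D+L}$. Setting $X:=V(D)-V(D+L)$, which has mean $\mu L$ with $\mu:=-\EE[\ln\rho_0]>0$,
\[
\PP[K_2-D>L] \;\leq\; \PP\bigl[X<\mu L/2\bigr] + \PP\bigl[\sup_{k\geq 1}W(k)\geq \mu L/2\bigr],
\]
and these two terms are $e^{-cL}$ (by Cramer, using the same $\varphi$) and $O(e^{-\kappa\mu L/2})$ (by the Feller-type asymptotics~(\ref{fellerthm})), both super-polynomial in $n$. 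Combining the two estimates and summing over $i\leq 2n$ yields $\PP[A(n)^c]=O(n^{-2})$ (in fact much better).

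The main delicate point is the second estimate: a naive use of~(\ref{fellerthm}) alone would give a useless bound, since $\EE[\rho_0^\kappa]=1$ renders $\EE[e^{-\kappa X}]=1$, so the Cramer cutoff at the typical value $X\approx\mu L$ is essential. One also has to check that the future-dependent definition of $K_1$ (as a right-record) does not spoil the forward-in-time Markov arguments used above, which is straightforward from the i.i.d.~structure of valleys for $i\geq 1$.
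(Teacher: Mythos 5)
Your decomposition $K_2-K_1 = (D-K_1) + (K_2-D)$ --- with $D$ the descent time and $K_2$ dominated by the location of the last maximum of $V$ on $[D,\infty)$ --- is a genuinely different route from the paper's. The paper shows instead that a deterministic combination of three events (large negative $V((\ln n)^2)$, no very negative single increment, small future increase past $(\ln n)^2$) forces $\overline K_1\leq(\ln n)^2$, and bounds each of the complementary probabilities by Cram\'er, by Chebyshev with $\epsilon_0$, and by~(\ref{fellerthm}), respectively. A pleasant feature of your route is that, by dominating $K_2$ via the actual argmax rather than a fixed window, you never need to rule out the walk jumping over a target interval of potential values; so~(\ref{integ_hyp}) plays a far less central role in your argument than in the paper's step~(\ref{Astep5}).

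However, the issue you flag in your last sentence and dismiss as ``straightforward'' is exactly where the paper supplies a key reduction that your sketch omits. Because the definition of $K_1$ contains the forward-looking condition $V(K_1)=\max_{k\geq K_1}V(k)$, the post-$K_1$ increments are \emph{not} i.i.d.~with the unconditioned law of $\ln\rho_0$; consequently $D$ and $D+L$ are not stopping times for the natural filtration of the environment, and the Chernoff bound for the descent, the Cram\'er estimate for $X$, and the strong-Markov step producing a ``fresh copy'' $W$ cannot be invoked as stated. The paper resolves this at the outset by the observation
\begin{align*}
\PP[K_2-K_1>(\ln n)^2]&=\PP\bigl[\overline K_1>(\ln n)^2\,\big|\, \max_{i\geq 0}V(i)\leq 0\bigr]\\
&\leq \frac{\PP[\overline K_1>(\ln n)^2]}{\PP[\max_{i\geq 0}V(i)\leq 0]},
\end{align*}
where $\overline K_1$ is the analogue of $K_1$ with $K_0=-n$ replaced by $0$ (so there is no residual forward-looking constraint on the starting point), and the denominator is a positive constant by~(\ref{logrho}). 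After this change of measure, $D$ and $D+L$ become genuine stopping times and all of your estimates go through verbatim. With that one step made explicit, your proof is correct and stands on its own.
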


\begin{proof}
We have
\begin{align}
 \PP[A(n)^c] &= \PP\Bigl[\max_{i\leq 2n} (K_{i+1}-K_i) > (\ln n)^2\Bigr]\nonumber\\
 &\leq 2n \PP[K_2-K_1 > (\ln n)^2]+\PP[\overline K_1 > (\ln n)^2],\label{Astep1}
\end{align}
where
\[
\overline K_1(n) = \min \Bigl\{ j \geq 0: -\min_{k\in[0,j]}V(k)\geq \frac 3{1\wedge \kappa}\ln n, \quad V(j)=\max_{k\geq j} V(k) \Bigr\}.
\]
Now 
\begin{align*}
\PP[K_2-K_1 > (\ln n)^2]&=\PP[\overline K_1> (\ln n)^2\mid \max_{i\geq 0} V(i) \leq 0]\\
&\leq \frac{\PP[\overline K_1 > (\ln n)^2]}{\PP[\max_{i\geq 0} V(i)\leq 0]},
\end{align*}
where $\PP[\max_{i\geq 0} V(i)\leq 0]>0$ since $\EE[\ln \rho_0]<0$.
Choose $\ell$ such that $\varepsilon_0\ell > 3(1\wedge \kappa)$, with $\varepsilon_0$ from (\ref{integ_hyp}).
Note that if $V((\ln n)^2) \leq -\frac {3+3\ell}{1\wedge\kappa} \ln n$, $ \min_{j\leq (\ln n)^2} \left(V(j)-V(j-1)\right)\geq - \frac{\ell}{1\wedge\kappa} \ln n$ and $\max_{j\geq (\ln n)^2} V(j)-V((\ln n)^2)\leq \frac 3{1\wedge\kappa} \ln n$, then the set 
\[
 \left\{i \in [0, (\ln n)^2],~V(i)\in \left(-\frac 3{1\wedge\kappa} \ln n,-\frac{3+2\ell}{1\wedge\kappa} \ln n\right)\right\}
\]
 is non-empty. Moreover its largest element~$m$ is such that $\max_{j\geq m} V(j)=V(m)$, 
hence we have $\overline K_1 \leq (\ln n)^2$. This yields
\begin{align}
\label{Astep2}
\PP[\overline K_1 > (\ln n)^2] &   \leq \PP\Bigl[V((\ln n)^2) >-\frac {3+3\ell}{1\wedge\kappa} \ln n 
\\ & \qquad \nonumber \qquad \text{ or } \min_{j\leq (\ln n)^2} \left(V(j)-V(j-1)\right)<  -\frac{\ell}{1\wedge\kappa} \ln n
\\ & \qquad \nonumber \qquad \text{ or } \max_{j\geq (\ln n)^2} V(j)-V((\ln n)^2)> \frac 3{1\wedge\kappa} \ln n\Bigr].
\end{align}

Using (\ref{fellerthm}), we obtain
\begin{equation}
\label{Astep3}
 \PP\Bigl[\max_{j\geq (\ln n)^2} V(j)-V((\ln n)^2)> \frac 3{1\wedge\kappa} \ln n\Bigr] =O(n^{-3}).
\end{equation}

Furthermore, using Chebyshev's inequality and (\ref{integ_hyp}) we get 
\begin{align}
\lefteqn{\PP\Bigl[ \min_{j\leq (\ln n)^2}\left( V(j)-V(j-1)\right)< - \frac{\ell}{1\wedge\kappa} \ln n\Bigr]} \nonumber\\
&\leq (\ln n)^2 \PP\left[\ln \rho_0 < 
-\frac{\ell}{1\wedge\kappa} \ln n\right] \nonumber\\
 &\leq (\ln n)^2 \PP\left[\rho_0^{-\varepsilon_0} > \exp\left(\varepsilon_0\frac{\ell}{1\wedge\kappa}\ln n\right) \right]\nonumber\\
& \leq (\ln n)^2 \EE[\rho_0^{-\epsilon_0}] n^{-\varepsilon_0\ell/(1\wedge \kappa)}\nonumber\\ 
&= o(n^{-3})\, .
\label{Astep5}
\end{align}

Now, since $V(\cdot)$ is a sum of i.i.d.~random variables with exponential moments by the assumptions~(\ref{def_kappa}) and~(\ref{integ_hyp}), we can use large deviations techniques to get
\begin{align}
\PP[V((\ln n)^2) >-C_1 \ln n] &\leq \PP\bigl[\abs{V((\ln n)^2)-\EE[V(1)] (\ln n)^2} >   C_2(\ln n)^2\bigr] \label{Astep4} \\ 
&\leq \exp(- C_3(\ln n)^2)\nonumber\\
 &= o(n^{-3}),\nonumber
\end{align}
since $\EE[V(1)]=\EE[\ln \rho_0]\in(-\infty,0)$. Putting together (\ref{Astep1}), (\ref{Astep2}), (\ref{Astep3}), (\ref{Astep5}) and~(\ref{Astep4}) 
we obtain the result.
\end{proof}

Consider $a\in [0, \nu)$, and define the event
\[
B(n,\nu,a)^c=\Bigl\{ \card\Bigl\{i \in N_n(-n^{\nu},n^{\nu}): H_i \geq \frac a {\kappa} \ln n + \ln \ln n\Bigr\} \geq n^{\nu -a} \Bigr\}.
\]

The following lemma will tell us that asymptotically, between levels $-n^{\nu}$ and $n^{\nu}$ there are at most $n^{\nu-a}$ valleys of depth greater than $(a/ \kappa) \ln n  + \ln \ln n$. 
\begin{lemma}
\label{depthvalley1}
For any $a\in [0,\nu)$, we have
\[
\PP[B(n,\nu,a)^c]=O(n^{-2}).
\]
\end{lemma}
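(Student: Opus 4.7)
Set $h_0 := (a/\kappa)\ln n + \ln\ln n$ and write
\[
X := \card\{i \in N_n(-n^{\nu},n^{\nu}) : H_i \geq h_0\},
\]
so that the goal is $\PP[X \geq n^{\nu - a}] = O(n^{-2})$. My strategy is to restrict to the event $A(n)$ of Lemma~\ref{widthvalley}, which has probability $1 - O(n^{-2})$, dominate $X$ by a sum of i.i.d.\ Bernoulli variables over a short window of valley indices, and then apply a Chernoff-type concentration bound.

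On $A(n)$, every valley $[K_i, K_{i+1})$ with $i \leq 2n$ has width in $[1, (\ln n)^2]$, so the number of valleys intersecting $[-n^{\nu}, n^{\nu}]$ is at most $M := \lceil 2n^{\nu} + 2(\ln n)^2 \rceil$. In particular $N_n(-n^{\nu}, n^{\nu})$ is a block of at most $M$ consecutive positive integers contained in $\{1, \ldots, 2n\}$, whence
\[
X \cdot \1{A(n)} \leq \max_{1 \leq j \leq 2n} \sum_{i=j}^{j+M-1} \1{H_i \geq h_0}.
\]
Since the valley potentials for $i \geq 1$ are i.i.d.\ up to shift, the sequence $(\1{H_i \geq h_0})_{i \geq 1}$ is i.i.d.\ Bernoulli with parameter $p := \PP[H_1 \geq h_0]$.

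The next step is the key estimate $p = O(n^{-a}/(\ln n)^{\kappa})$. Because $V(K_i) = \max_{k \geq K_i} V(k)$ and $V(b_i) = \min_{k \in [K_i, K_{i+1})} V(k)$, for $j<k$ in the valley we have $V(k) \leq V(K_i)$ and $V(j) \geq V(b_i)$, so $H_i \leq V(K_i) - V(b_i)$. Hence $p$ is dominated by the tail of the drop of $V$ from a ladder point, and I would invoke~\eqref{fellerthm} to get $p \leq C_1 e^{-\kappa h_0} = C_1 n^{-a}/(\ln n)^{\kappa}$. Consequently $Mp \leq C_2 n^{\nu-a}/(\ln n)^{\kappa}$ is smaller than the target $n^{\nu-a}$ by a factor $\gtrsim (\ln n)^{\kappa}$, and the standard binomial Chernoff bound yields
\[
\PP\Bigl[\text{Bin}(M,p) \geq n^{\nu-a}\Bigr] \leq \Bigl(\frac{eMp}{n^{\nu-a}}\Bigr)^{n^{\nu-a}} \leq \Bigl(\frac{C_3}{(\ln n)^{\kappa}}\Bigr)^{n^{\nu-a}}.
\]
Since $\nu - a > 0$ by hypothesis, $n^{\nu-a} \to \infty$, so this bound decays super-polynomially in $n$. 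A union bound over the at most $2n$ choices of $j$ then gives $\PP[\{X \geq n^{\nu-a}\} \cap A(n)] = o(n^{-2})$, and combining with $\PP[A(n)^c] = O(n^{-2})$ finishes the argument.

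The main delicate point will be the bound $p \leq C_1 e^{-\kappa h_0}$: the drop $V(K_i) - V(b_i)$ is the largest descent of a random walk with negative drift taken over a random horizon and started from a point conditioned to be a future maximum of $V$. One has to verify that this ladder conditioning (and the randomness of the horizon $K_{i+1} - K_i$) inflates the Feller tail~\eqref{fellerthm} by at most a constant factor uniformly in $n$. Everything else reduces to the width bound from Lemma~\ref{widthvalley}, the i.i.d.\ structure of the valleys, and a single application of the Chernoff inequality.
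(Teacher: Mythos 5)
Your overall strategy --- restrict to $A(n)$, dominate the count by a binomial over a window of $M$ consecutive valley indices, then apply a concentration bound --- is structurally close to the paper's, which dominates directly by $\text{Bin}(2\lfloor n^\nu\rfloor+2,\,\cdot\,)$ without needing $A(n)$ and then applies Chebyshev's exponential inequality. The difference that matters is which success parameter goes into the binomial, and here you have a genuine gap, not merely a delicate point. You pass through $H_i\leq V(K_i)-V(b_i)$ and then claim $p=\PP[H_1\geq h_0]\leq C_1e^{-\kappa h_0}$ by ``invoking~\eqref{fellerthm} on the tail of the drop''. But~\eqref{fellerthm} controls the \emph{rise} $S=\max_{i\geq0}V(i)$ of a negatively-drifted walk, not a drop: the tail of $V(K_1)-V(b_1)$ is governed by the drift, not by $\kappa$. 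In fact the construction of $K_2$ forces a descent of at least $\frac{3}{1\wedge\kappa}\ln n$ over $[K_1,K_2]$, which already exceeds $h_0=\frac{a}{\kappa}\ln n+\ln\ln n$ for all $n$ large (since $a<\nu<1\wedge\kappa$), so $\PP[V(K_1)-V(b_1)\geq h_0]$ is close to $1$, not to $e^{-\kappa h_0}$. The inequality $H_i\leq V(K_i)-V(b_i)$, while true, carries no information at scale $h_0$.

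The correct observation is that $H_i\geq h_0$ forces $\max_{k>j}V(k)-V(j)\geq h_0$ at \emph{some} site $j\in[K_i,K_{i+1})$, an event of probability $\Theta(e^{-\kappa h_0})$. Converting this into a per-valley bound by a union over the at most $(\ln n)^2$ sites in a valley (on $A(n)$) gives only $p\leq C(\ln n)^2 e^{-\kappa h_0}=Cn^{-a}(\ln n)^{2-\kappa}$; for $\kappa\leq1$ --- the regime of main interest --- this is \emph{larger} than $n^{-a}$, hence $Mp$ exceeds the threshold $n^{\nu-a}$ and the Chernoff factor $(eMp/n^{\nu-a})^{n^{\nu-a}}$ does not decay. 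There is no evident way to remove the polylog loss in a per-valley estimate. The paper sidesteps the problem: rather than estimating $\PP[H_1\geq h_0]$ at all, it associates each valley with $H_i\geq h_0$ to one of at most $2\lfloor n^\nu\rfloor+2$ integer sites at which the future increase of the potential exceeds $h_0$, and dominates the count by $\text{Bin}(2\lfloor n^\nu\rfloor+2,\PP[S\geq h_0])$ directly, so the success parameter is exactly $\PP[S\geq h_0]=\Theta(n^{-a}(\ln n)^{-\kappa})$ and the $\ln\ln n$ built into $h_0$ supplies precisely the $(\ln n)^{-\kappa}$ slack that the exponential Chebyshev bound needs. Your window and i.i.d.-valley observations are fine; to close the proof, replace the per-valley estimate with the paper's per-site one.
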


\begin{proof}
 We have easily that (``$\prec$'' means ``stochastically dominated'')
\begin{align*}
 \lefteqn{\card\Bigl\{i \leq N_n(-n^{\nu},n^{\nu}): H_i \geq \frac a {\kappa} \ln n + \ln \ln n\Bigr\}}\\
& \prec \text{Bin}\Bigl(2\lfloor n^{\nu}\rfloor+2,\PP\Bigl[S\geq \frac a {\kappa} \ln n + \ln \ln n\Bigr]\Bigr),
\end{align*}
since we have at most $2\lfloor n^{\nu}\rfloor+2$ integers on the right of which we need an increase of potential of $(a/\kappa)\ln n+\ln \ln n$ to create a valley of sufficient depth.

Using~(\ref{fellerthm}), we have
\[
\PP\Bigl[S\geq \frac a {\kappa} \ln n + \ln \ln n\Bigr]=O\Bigl( \frac{n^{-a}}{(\ln n)^{\kappa}}\Bigr).
\]
Now, using Chebyshev's exponential inequality, we can write
\begin{align*}
\lefteqn{\PP\Bigl[ \text{Bin}\Bigl(2 \lfloor n^{\nu}\rfloor+2,\PP\Bigl[S\geq \frac a {\kappa} \ln n + \ln \ln n\Bigr]\Bigr) 
\geq n^{\nu -a}\Bigr]}\\
 &\leq C_4 \exp(-n^{\nu -a})\exp(C_5 n^{\nu-a}(\ln n)^{-\kappa}),
\end{align*}
and, since $\nu>a$, the result follows.
\end{proof}

We introduce for $m\in \Z^+$ the following event, which, by Lemma~\ref{depthvalley1}, has probability converging to 1,
\begin{equation}
\label{def_b_prime}
B'(n,\nu,m)=\bigcap_{k=1}^{m-1}B(n,\nu, k \nu/m).
\end{equation}

Also, set
\begin{align*}
G(n)^c=&\Bigl\{\max_{ k \geq n} (V(k)-V(n)) \geq \frac 1 {\kappa}(\ln n + 2 \ln \ln n) \Bigr\}
\nonumber\\
    &\bigcup \Bigl\{\max_{ k \geq -n} (V(k)-V(-n)) \geq \frac 1 {\kappa}(\ln n + 2 \ln \ln n) \Bigr\}.
\end{align*}

\begin{lemma}
\label{depthdiff}
We have
\[
\PP[G(n)^c] =O \Bigl(\frac 1 {n (\ln n)^2}\Bigr).
\]
\end{lemma}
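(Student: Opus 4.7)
The plan is to recognize that each of the two events composing $G(n)^c$ is an event about the overshoot of the random walk $V(\cdot)$ past a fixed level, measured from a reference point, and then invoke the Feller tail estimate (\ref{fellerthm}) together with a union bound.

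First I would observe that, since $V(\cdot)$ is a random walk with i.i.d.\ increments $\ln\rho_i$, the shifted process $\bigl(V(k+n)-V(n)\bigr)_{k\geq 0}$ has the same law as $\bigl(V(k)\bigr)_{k\geq 0}$, and likewise for the shift by $-n$. Consequently,
\[
\max_{k\geq n}(V(k)-V(n)) \stackrel{law}{=} \max_{k\geq -n}(V(k)-V(-n)) \stackrel{law}{=} S := \max_{k\geq 0} V(k).
\]
Thus both events in the definition of $G(n)^c$ coincide in distribution with $\{S \geq h_n\}$, where
\[
h_n = \frac{1}{\kappa}(\ln n + 2\ln\ln n).
\]

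Next I apply the Feller renewal estimate (\ref{fellerthm}), which under our assumptions~(\ref{logrho}) and~(\ref{def_kappa}) gives $\PP[S>h] = \Theta(e^{-\kappa h})$. Plugging in $h=h_n$ yields
\[
\PP[S \geq h_n] = O\bigl(e^{-\kappa h_n}\bigr) = O\Bigl(\frac{1}{n(\ln n)^2}\Bigr).
\]
A union bound over the two events in $G(n)^c$ finishes the proof with the same order of magnitude.

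There is essentially no obstacle here: the content is purely the translation-invariance of the environment plus the Feller asymptotics already recorded in (\ref{fellerthm}). The only point worth double-checking is that (\ref{fellerthm}) is indeed a two-sided bound (which it is, as $\Theta$ rather than $O$), so that the chosen threshold $h_n$ is sharp enough to produce the factor $n^{-1}(\ln n)^{-2}$ rather than merely $n^{-1}$; the extra $2\ln\ln n$ in the definition of $G(n)^c$ is precisely tuned to yield the $(\ln n)^{-2}$ correction needed for a summable (over dyadic scales) error later on.
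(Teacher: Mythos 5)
Your proof is correct and is essentially the same argument the paper compresses into the single line ``This is a direct consequence of (\ref{fellerthm})'': translation invariance reduces each of the two events to $\{S\geq h_n\}$, and the Feller estimate with $h_n=\frac{1}{\kappa}(\ln n+2\ln\ln n)$ gives exactly the claimed order $n^{-1}(\ln n)^{-2}$.
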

\begin{proof}
This is a direct consequence of (\ref{fellerthm}).
\end{proof}
We now show that Lemma \ref{depthdiff} implies that asymptotically, in the interval $[-n,n]$, the deepest valley we can find
 has depth lower than $ \frac 1 {\kappa}(\ln n + 2 \ln \ln n)$. Let
\begin{equation}
\label{Gdef}
G_1(n)=\Bigl\{\ \max_{ i\in [-n, n]} \max_{k\geq i} ( V(k)-V(i) ) \leq \frac {1} {\kappa}(\ln n+ 2\ln \ln n) \Bigr\}.
\end{equation}

\begin{lemma}
\label{depthvalley}
For $\PP$-almost all $\omega$, there is $N= N(\omega)$ such that $\omega \in G_1(n)$ for $n \geq N$.
\end{lemma}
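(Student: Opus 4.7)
The plan is a Borel--Cantelli argument along a subsequence combined with an interpolation step, tuned carefully so as to reach the exact threshold $\tfrac{1}{\kappa}(\ln n + 2\ln \ln n)$. We want to show that, almost surely, for all large $n$, $R(n):=\max_{i\in[-n,n]}\max_{k\geq i}(V(k)-V(i)) \leq \tfrac{1}{\kappa}(\ln n + 2\ln \ln n)$. The three key ingredients are (i) stationarity of the increments $\ln\rho_i$, which makes $\max_{k\geq i}(V(k)-V(i))$ have the same law as $S=\max_{k\geq 0}V(k)$ for every $i$, (ii) the sharp Feller asymptotic~(\ref{fellerthm}), $\PP[S>h]=\Theta(e^{-\kappa h})$, and (iii) monotonicity of $R(n)$ in $n$.

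The first step is a union bound. Fix an auxiliary constant $c\in(1,2)$. Then
\[
\PP\Bigl[R(n) > \tfrac{1}{\kappa}(\ln n + c \ln \ln n)\Bigr] \leq (2n+1)\cdot\Theta\bigl(e^{-(\ln n + c \ln \ln n)}\bigr) = O\bigl(1/(\ln n)^c\bigr).
\]
This is not summable over all $n$, but along the geometric subsequence $n_k=2^k$ it becomes $O(1/k^c)$, which is summable since $c>1$. Borel--Cantelli therefore yields $R(n_k)\leq \tfrac{1}{\kappa}(\ln n_k + c\ln \ln n_k)$ for all sufficiently large $k$, $\PP$-almost surely.

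The interpolation step uses monotonicity: for $n\in[n_k,n_{k+1}]$, $R(n)\leq R(n_{k+1})$, and since $n_{k+1}\leq 2n$ we have $\ln n_{k+1}\leq \ln n + \ln 2$ and $\ln\ln n_{k+1} = \ln\ln n + O(1/\ln n)$, hence
\[
R(n) \leq \tfrac{1}{\kappa}\bigl(\ln n + \ln 2 + c\ln\ln n + o(1)\bigr).
\]
Comparing to the target, the excess is $\tfrac{1}{\kappa}\bigl(\ln 2 - (2-c)\ln\ln n + o(1)\bigr)\to -\infty$ since $2-c>0$, so eventually $R(n)\leq \tfrac{1}{\kappa}(\ln n + 2\ln\ln n)$, which is $G_1(n)$. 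The main technical point, and the only real obstacle, is precisely the choice of $c\in(1,2)$: running the union bound directly at $c=2$ gives a non-summable $O(1/(\ln n)^2)$, whereas any $c\in(1,2)$ is summable along the geometric subsequence and leaves a gap of order $\ln\ln n$ against the target, which comfortably absorbs the $\ln 2$ loss produced by the interpolation.
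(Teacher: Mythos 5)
Your proof is correct, and it takes a recognizably different route from the paper's. The paper does not do a union bound over the whole window $[-n,n]$: instead it first proves (via Lemma~\ref{depthdiff}) that the single-site event $\{\max_{k\geq n}(V(k)-V(n)) > \tfrac{1}{\kappa}(\ln n + 2\ln\ln n)\}$ has probability $O(1/(n(\ln n)^2))$, which is summable directly, applies Borel--Cantelli to obtain a random index $n_0$ past which \emph{each} individual $i$ satisfies the bound $\max_{k\geq i}(V(k)-V(i))\leq\tfrac{1}{\kappa}(\ln i + 2\ln\ln i)$, and then handles the finitely many indices $i\leq n_0$ by a separate finite-max argument and converts the bound-in-$i$ to a bound-in-$n$ using monotonicity of $\tfrac{1}{\kappa}(\ln \cdot + 2\ln\ln \cdot)$. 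You instead control the maximum over $[-n,n]$ at once, which forces a non-summable $(\ln n)^{-c}$ estimate, and you recover summability by passing to the geometric subsequence $n_k=2^k$ and interpolating via monotonicity of $R(n)$ in $n$. The price you pay is the extra care needed to pick $c\in(1,2)$ so that the $\ln 2$ interpolation loss is absorbed by the $(2-c)\ln\ln n$ slack; what you gain is that you bypass Lemma~\ref{depthdiff} entirely and go straight from the Feller-type tail bound~(\ref{fellerthm}). Both proofs are clean; yours is self-contained given (\ref{fellerthm}), whereas the paper's version avoids the subsequence-plus-slack bookkeeping and reuses an auxiliary lemma it needs anyway.
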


\begin{proof}
By symmetry, it suffices to give the proof for
\begin{equation}
G_2(n)=\Bigl\{\ \max_{ i\in[0,n]} \max_{k\geq i} ( V(k)-V(i) ) \leq \frac {1} {\kappa}(\ln n+ 2\ln \ln n) \Bigr\}
\end{equation}
instead of $G_1(n)$.
Let 
\[
n_0 : = \min\Bigl\{j \geq 0:  \max_{k\geq i} ( V(k)-V(i) ) \leq  \frac {1} {\kappa}(\ln i + 2\ln \ln i), \, \forall i\geq j \Bigr\}
\]
and
\[
K = \max_{0 \leq i\leq n_0}\max_{k\geq i} ( V(k)-V(i) ) .
\]
Due to Lemma \ref{depthdiff}, $n_0$ is finite $\PP$-almost surely. Now, take $N$ large enough such that $N \geq n_0$ and
\[
\frac {1} {\kappa}(\ln N +2\ln \ln N)\geq K.
\]
Then for $n\geq N$, let $\ell\in[0,n]$ be such that $ \max_{ i\in[0,n]} \max_{k\geq i} ( V(k)-V(i) ) = \max_{k\geq \ell} ( V(k)-V(\ell) )$. We have either $\ell \leq n_0$ and then $\max_{k\geq \ell} ( V(k)-V(\ell) ) \leq K$ by the definition of $K$, or $\ell > n_0$ and then, by the definition of $n_0$,
$\max_{k\geq \ell} ( V(k)-V(\ell) ) \leq \frac {1} {\kappa}(\ln \ell + 2\ln \ln \ell)\leq \frac {1} {\kappa}(\ln n + 2\ln \ln n)$.
\end{proof}
Let us define
\begin{align*}
D(n)^c=&\Bigl\{\ \max_{ i\in[0,n]} \max_{k\geq i} ( V(k)-V(i) ) \leq \frac {1} {\kappa}(\ln n-4\ln \ln n) \Bigr\}
\nonumber\\
    &\bigcup \Bigl\{\ \max_{ i\in[-n,0]} \max_{k\geq i} ( V(k)-V(i) ) \leq \frac {1} {\kappa}(\ln n-4\ln \ln n) \Bigr\}.
\end{align*}
\begin{lemma}
\label{depthvalley2}
We have
\[
\PP[D(n)^c]=O(n^{-2}).
\]
\end{lemma}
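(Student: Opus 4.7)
The plan is to prove a matching lower bound to Lemma~\ref{depthvalley}: while that lemma says the maximum upward fluctuation of $V$ on $[-n,n]$ is typically at most $\frac{1}{\kappa}(\ln n+2\ln\ln n)$, here I want to show it is also at least $\frac{1}{\kappa}(\ln n-4\ln\ln n)$ with probability $1-O(n^{-2})$. Writing $h_n=\frac{1}{\kappa}(\ln n-4\ln\ln n)$, so that $e^{-\kappa h_n}=n^{-1}(\ln n)^{4}$, translation invariance and a union bound reduce the problem to showing
\[
 \PP\Bigl[\max_{i\in[0,n]}\max_{k\geq i}(V(k)-V(i))\leq h_n\Bigr]=O(n^{-2}).
\]

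First I would carry out a block decomposition that delivers an independent family. Set $\ell:=\lceil(\ln n)^2\rceil$, partition $[0,n]$ into disjoint blocks $I_j=[j\ell,(j+1)\ell)$ for $j=0,\dots,\lfloor n/\ell\rfloor-1$, and introduce
\[
 E_j:=\Bigl\{\max_{k\in I_j}\bigl(V(k)-V(j\ell)\bigr)>h_n\Bigr\}.
\]
Since $E_j$ depends only on the increments $\ln\rho_i$ with $i\in(j\ell,(j+1)\ell]$, the events are independent and equidistributed, with common probability $p_n:=\PP[E_0]$. Moreover the occurrence of any single $E_j$ forces $\max_{i\in[0,n]}\max_{k\geq i}(V(k)-V(i))>h_n$, so
\[
 \PP\Bigl[\max_{i\in[0,n]}\max_{k\geq i}(V(k)-V(i))\leq h_n\Bigr]\leq (1-p_n)^{\lfloor n/\ell\rfloor}.
\]

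The key quantitative step is to show that truncating at time $\ell=(\ln n)^2$ costs at most a constant factor in the tail, i.e.\ $p_n\geq\frac{1}{2}\PP[S>h_n]$ for $n$ large. I would write $p_n=\PP[S>h_n]-\PP[S>h_n,\,M_\ell\leq h_n]$ with $M_\ell=\max_{0\leq k\leq\ell}V(k)$, and bound the defect by $\PP[\max_{k\geq\ell}V(k)>h_n]$. Setting $\mu:=-\EE[\ln\rho_0]>0$, the assumptions~(\ref{def_kappa}) and~(\ref{integ_hyp}) provide enough exponential integrability of $\ln\rho_0$ to apply Cramér's theorem, giving $\PP[V(\ell)>-(\mu/2)\ell]\leq e^{-C_1\ell}$. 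On the complementary event, by the independence of $V(\ell)$ and $(V(k)-V(\ell))_{k\geq\ell}$ together with~(\ref{fellerthm}),
\[
 \PP\Bigl[\max_{k\geq\ell}(V(k)-V(\ell))>h_n+\tfrac{\mu}{2}\ell\Bigr]=O\bigl(e^{-\kappa h_n}e^{-\kappa\mu\ell/2}\bigr).
\]
With $\ell=(\ln n)^2$ both error terms are $o(e^{-\kappa h_n})$, so together with (\ref{fellerthm}) one gets $p_n\geq C_2 n^{-1}(\ln n)^{4}$ for $n$ large. Combining with $\lfloor n/\ell\rfloor\geq n/(2(\ln n)^2)$,
\[
 (1-p_n)^{\lfloor n/\ell\rfloor}\leq\exp\Bigl(-\frac{C_2}{2}(\ln n)^{2}\Bigr)=o(n^{-2}),
\]
and doubling for the symmetric $[-n,0]$ piece finishes the proof.

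The main obstacle is the lower bound $p_n\gtrsim e^{-\kappa h_n}$: Feller's asymptotics~(\ref{fellerthm}) control only the unrestricted maximum $S$, and one has to verify that with high probability the first time $V$ exceeds $h_n$ (conditional on doing so) is much smaller than $(\ln n)^2$. Everything else, including the choice of $\ell$ and the union bound over the two halves of $[-n,n]$, is routine once the Cramér-plus-Feller splitting above is in place.
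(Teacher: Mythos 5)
Your proposal is correct and follows essentially the same route as the paper: both decompose $[0,n]$ into $\lfloor n/(\ln n)^2\rfloor$ blocks of length $\ell=(\ln n)^2$, use independence of the per-block events, show the per-block success probability is $\Theta(e^{-\kappa h_n})=\Theta((\ln n)^4/n)$ by truncating Feller's tail $\PP[S>h_n]$ to the first $\ell$ steps (your Cram\'er-plus-Feller splitting is the same argument the paper invokes via ``a reasoning similar to the proof of Lemma~\ref{widthvalley}''), and finish with $(1-p_n)^{n/\ell}\leq\exp(-C(\ln n)^2)$. The only nit: your identity $p_n=\PP[S>h_n]-\PP[S>h_n,\,M_\ell\leq h_n]$ actually computes $\PP[M_\ell>h_n]$, not $\PP[E_0]$ (the half-open block $[0,\ell)$ drops $k=\ell$), but this is harmless --- include $k=\ell$ in $E_j$ (independence still holds since the increment ranges $(j\ell,(j+1)\ell]$ remain disjoint) or note that the extra event has negligible probability.
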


\begin{proof}

First, we notice that
\begin{align*}
\PP[D(n)^c] & \leq 2\PP\Bigl[\max_{ i \in [0,\frac{n}{\lfloor (\ln n)^2\rfloor}]} \max_{k \leq (\ln n)^2} V(i(\ln n)^2+k)-V(i(\ln n)^2) \\
& \qquad\qquad\qquad\qquad\qquad \leq \frac {1} {\kappa}(\ln n-4\ln \ln n)\Bigr]+\PP[A(n)^c],
\end{align*}
where $\PP[A(n)^c]=O(n^{-2})$ by Lemma~\ref{widthvalley}.

Let us introduce
\[
D^{(1)}(n)=\Bigl\{\max_{ k >\lfloor (\ln n)^2 \rfloor} V(k)-V(0) \geq \frac {1} {\kappa}(\ln n-4\ln \ln n)\Bigr\},
\]
then we have
\begin{align*}
\PP[D^{(1)}(n)]\leq &\PP\Bigl[\max_{ k \geq 0} V(k)-V(0) >  \frac {1} {\kappa}(\ln n-4\ln \ln n)\Bigr]
\nonumber\\
    &+\PP\Bigl[\max_{ k \geq 0} V(k)-V(0) \neq \max_{ k \leq (\ln n)^2} V(k)-V(0)\Bigr] =\Theta\Bigl(\frac {(\ln n)^4} {n}\Bigr),
\end{align*}
using a reasoning similar to the proof of Lemma~\ref{widthvalley} (cf.\ equations~(\ref{Astep2}) and~(\ref{Astep3})) to show that the second term is at most $O(n^{-2})$.

So, we obtain for~$n$ large enough
\[
\PP[D(n)^c] \leq 2\Bigl(1-\frac{C_6(\ln n)^4} {n}\Bigr)^{n/(\ln n)^2}\leq 2\exp\left(-C_7 (\ln n)^2\right),
\]
hence the result.
\end{proof}

Finally, let us introduce
\[
F(n)=\bigl\{ \min_{i \in [-n,n]} (1-\omega_i) >n^{-3/\epsilon_0}\bigr\}.
\]

\begin{lemma}
\label{easy_backtrack}
We have
\[
\PP[F(n)^c]=O\Bigl(\frac 1 {n^2}\Bigr).
\]
\end{lemma}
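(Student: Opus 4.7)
The plan is to reduce the event $F(n)^c$ to a tail estimate on $\rho_0^{-\epsilon_0}$, which is controlled by hypothesis~(\ref{integ_hyp}), and then take a union bound over the $2n+1$ sites $i\in[-n,n]$.

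First I would translate the event $\{1-\omega_i\leq n^{-3/\epsilon_0}\}$ into an event on $\rho_i$. Since $\omega_i=1/(1+\rho_i)$ we have $1-\omega_i=\rho_i/(1+\rho_i)$, so for $\delta\leq 1/2$ the inequality $1-\omega_i\leq\delta$ implies $\rho_i\leq\delta/(1-\delta)\leq 2\delta$. Applied with $\delta=n^{-3/\epsilon_0}$ (valid for $n$ large enough), this gives the inclusion
\[
 \{1-\omega_i\leq n^{-3/\epsilon_0}\}\subseteq\{\rho_i\leq 2n^{-3/\epsilon_0}\}=\{\rho_i^{-\epsilon_0}\geq 2^{-\epsilon_0}n^3\}.
\]

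Next, using Markov's inequality together with~(\ref{integ_hyp}),
\[
 \PP[\rho_0^{-\epsilon_0}\geq 2^{-\epsilon_0}n^3]\leq\frac{2^{\epsilon_0}\,\EE[\rho_0^{-\epsilon_0}]}{n^3}=O(n^{-3}).
\]
Since the $\omega_i$ are i.i.d., a union bound over the $2n+1$ sites of $[-n,n]$ yields
\[
 \PP[F(n)^c]\leq(2n+1)\cdot O(n^{-3})=O(n^{-2}),
\]
which is exactly the claimed bound.

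There is no serious obstacle here: the only delicate point is converting the condition on $1-\omega_i$ into a useable condition on $\rho_i^{-\epsilon_0}$, after which everything reduces to one application of Markov's inequality and a trivial union bound. Note that the exponent $3/\epsilon_0$ in the definition of $F(n)$ is tuned precisely so that the single-site bound is of order $n^{-3}$, leaving one power of $n$ to spare after summing over $2n+1$ sites.
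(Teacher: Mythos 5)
Your proof is correct and follows essentially the same route as the paper's: convert the condition $1-\omega_i\leq n^{-3/\epsilon_0}$ into $\rho_i\leq 2n^{-3/\epsilon_0}$ (the paper phrases this as $1-\omega_i\geq\min(1/2,\rho_i/2)$, which is the same observation), apply Markov's inequality to $\rho_0^{-\epsilon_0}$ using hypothesis~(\ref{integ_hyp}) to get a single-site bound of order $n^{-3}$, and finish with a union bound over the $2n+1$ sites.
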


\begin{proof}
We notice that $1-\omega_i\geq \min(1/2, \rho_i/2)$, so that it is enough to prove that $\PP[\rho_i<2n^{-3/\epsilon_0}]=O(n^{-3})$ which is a consequence of~(\ref{integ_hyp}), since by Chebyshev's inequality
\[
\PP\Bigl[\rho_i^{-1}>\frac{n^{3/\epsilon_0}}{2}\Bigr]\leq \frac {2^{\epsilon_0}\EE[ \rho_0^{-\epsilon_0}]}{n^3}.
\]
\end{proof}

Using the Borel-Cantelli Lemma one can obtain that for $\PP$-almost all~$\omega$ and~$n$ large enough, we have $\omega \in A(n)\cap B'(n,\nu,m) \cap G_1(n) \cap D(n)\cap F(n)$. That is, the width of the valleys is lower than $(\ln n)^2$, their depth lower than $(\ln n + 2 \ln \ln n)/\kappa$, we can control the number of valleys deeper than
$\frac {a} {\kappa}\ln n-\ln \ln n$, and there is at least one valley of depth $(\ln n -4 \ln \ln n)/\kappa$.

Due to the definition of the valleys, the potential goes down at least by $\frac{3}{1\wedge \kappa}\ln n$ in a valley  and on $G_1(n)$ 
the biggest increase of potential is lower than $\frac 1{\kappa} (\ln n+ 2 \ln \ln n)$ for all valleys in $[-n,n]$. In particular, on $G_1(n)$, $(V(K_i))_{i\leq 2n}$ is a decreasing sequence and we have
\begin{align*}
V(b_{i+1}) & \leq  V(b_i) - \frac{3}{1\wedge \kappa}\ln n + \frac 1{\kappa} (\ln n+ 2 \ln \ln n) 
\nonumber\\
& \leq  V(b_i) - \frac{2}{1\wedge \kappa}\ln n + 
\frac 2{\kappa} \ln \ln n
\end{align*}
implying using~(\ref{pidef}) that for all valleys in $[-n, n]$,
\begin{equation}
\label{weightbottom}
\qquad \pi(b_i)\leq 2e^{-V(b_i)}\leq \frac {2(\ln n)^{2/\kappa}}{n^{2/(1\wedge \kappa)}} \pi(b_{i+1})\leq \frac 12 \pi(b_{i+1}).
\end{equation}

In a similar fashion, we can give an upper bound for $V(K_{i})-V(b_i)$ on $G_1(n)\cap F(n)$. 
We claim that on $G_1(n)\cap F(n)$, for a constant $\gamma_0$,
\begin{equation}
\label{bound_pot}
V(K_{i})-V(K_{i+1}) \leq V(K_{i})-V(b_i)\leq \gamma_0 \ln n.
\end{equation}
To show (\ref{bound_pot}), let~$x$ be the smallest integer larger than $K_{i}$ such that $V(x)\leq V(K_{i})-(3/(1\wedge \kappa)) \ln n$.
 By definition of $K_{i+1}$ it satisfies $V(x)\leq V(K_{i+1})$. But on $F(n)$ we know that $V(x)\geq V(K_{i}) -(3/(1\wedge \kappa)+3/\epsilon_0)\ln n$. Recalling that on $G_1(n)$ we have $V(b_i)\geq V(K_{i+1})-(2/\kappa)\ln n$, we get for $n$ large enough
\begin{align*}
V(K_i)-V(b_i)& \leq V(K_i)-(V(K_{i+1})-\frac{2}{\kappa} \ln n)\nonumber\\
             & \leq V(K_i)-(V(x)-\frac{2}{\kappa} \ln n)\nonumber\\
             & \leq \Bigl(\frac{3}{1 \wedge \kappa} +\frac{3}{\epsilon_0} +\frac{2}{\kappa}\Bigr) \ln n\, .
\end{align*}

\section{Bounds on the probability of confinement}
\label{s_confinement}
In this section, let $I=[a,c]$ be a finite interval of~$\Z$ containing at least four points and let the potential~$V(x)$ 
be an arbitrary function defined for $x \in [a-1, c]$, with $V(a-1)=0$. This potential defines transition probabilities given by $\omega_x = e^{-V(x)}/\pi(x)$, $x \in [a,c]$ where $\pi(x)$ is defined as in (\ref{pidef}) (taking $V(a-1)=0$ is no loss of generality since the transition probabilities remain the same if we replace $V(x)$ by $V(x) + c$, $\forall x$).
We denote by~$X$ the Markov chain restricted on~$I$ in the following
way: the transition probability $\omega_a$ from~$a$ to $a+1$ is defined as above, and with probability~$1-\omega_a$
the walk just stays in~$a$; in the same way, we define the reflection at the other border~$c$. We denote 
\begin{align*}
H_+&=\max_{x\in[a, c]} \Bigl(\max_{y \in [x,c]} V(y) -\min_{y\in [a,x)} V(y)\Bigr) ,\\
H_-&=\max_{x\in[a, c]} \Bigl(\max_{y \in [a,x]} V(y) -\min_{y\in (x,c]} V(y)\Bigr),
\end{align*}
and
\[
H=H_+\wedge H_-.
\]
Let us denote also by
\[
 {\tilde M} = \max_{y \in [a,c]} V(y) -\min_{y\in [a,c]} V(y)
\]
the maximal difference between the values of the potential in the interval $[a,c]$.
Also, we set 
\begin{align*}
f&= \begin{cases} 
                  c, & \text{ if } H=H_+, \\
                  a, & \text{ otherwise. }
    \end{cases}
\end{align*}

To avoid confusion, let us mention that the results of this section
(Propositions~\ref{spectral}, \ref{cost_to_climb}, \ref{Lboundconf}) hold for both
the unrestricted and restricted random walks (as long as the starting point belongs to~$I$).
First, we prove the following
\begin{proposition}
\label{spectral}
There exists ${\gamma}_1>0$, such that for all $u\geq 1$ 
\begin{align*}
\lefteqn{\max_{x\in I} \Po^x\Bigl[\frac{T_{\{a,c\}}}{{\gamma}_1(c-a)^3((c-a)+{\tilde M})e^H}>u \Bigr]}\\
&\leq \max_{x\in I} \Po^x\Bigl[\frac{T_{f}}{{\gamma}_1(c-a)^3((c-a)+{\tilde M})e^H}>u \Bigr] \\ &\leq e^{-u}.
\end{align*}
\end{proposition}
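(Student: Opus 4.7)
The first inequality is immediate: since $f \in \{a,c\}$ we have $T_{\{a,c\}} \leq T_f$ pointwise, hence $\Po^x[T_{\{a,c\}} > t] \leq \Po^x[T_f > t]$ for every $t \geq 0$. The substance is the second inequality. My plan is the standard two-step route: bound $\max_{x \in I} \Eo^x[T_f]$ uniformly, then upgrade to an exponential tail by Markov's inequality plus iteration via the strong Markov property.

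For the mean bound, assume without loss of generality that $H = H_+$, so that $f = c$. Using the classical formula for the mean hitting time of the right endpoint of a reflected nearest-neighbour chain with reversible measure~$\pi$,
\[
\Eo^x[T_c] \;=\; \sum_{j=x}^{c-1} \frac{\pi([a,j])}{\omega_j \pi(j)} \;=\; \sum_{j=x}^{c-1} e^{V(j)} \pi([a,j]),
\]
together with the pointwise estimate $\pi([a,j]) \leq 2(j-a+1)\exp\bigl(-\min_{k \in [a-1,j]} V(k)\bigr)$, reduces the problem to controlling $V(j) - \min_{k \in [a-1,j]} V(k)$. By the very definition of $H_+$, taking the splitting point in the supremum defining $H_+$ to be just to the right of this minimum shows that the difference is at most $H_+ = H$, modulo boundary cases which contribute at most an extra additive $e^{-\min_{[a,c]} V}$ term absorbed by the factor $\tilde M$ in the prefactor. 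Summing over $j$, and crudely counting terms, produces $\max_{x \in I} \Eo^x[T_f] \leq M$ with $M \leq \tfrac14 \gamma_1 (c-a)^3\bigl((c-a)+\tilde M\bigr) e^H$ for a suitable constant~$\gamma_1$.

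For the tail, Markov's inequality gives $\Po^x[T_f > 2M] \leq 1/2$ uniformly in $x \in I$. Applying the strong Markov property at time $2M$ and iterating yields $\Po^x[T_f > 2kM] \leq 2^{-k}$ for every integer $k \geq 0$. For arbitrary $u \geq 1$, choosing $k = \lceil u / \ln 2 \rceil$ then gives $\Po^x[T_f > 4 M u / \ln 2 ] \leq e^{-u}$, which is the desired bound after absorbing the numerical constant $4/\ln 2$ into~$\gamma_1$. The same argument applies verbatim to the unrestricted chain as long as the starting point lies in $I$, because the estimates above only exploit the behaviour of $V$ on $I$.

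The delicate point is Step~1, specifically the requirement that the bound scale as $e^H$ rather than $e^{\tilde M}$. This is precisely why $f$ is chosen on the side of the smaller barrier: the definitions of $H_+$ and $H_-$ are tailored so that, when $f = c$, every factor $e^{V(j)-V(k)}$ produced by the explicit formula with $k \leq j$ is bounded by $e^{H_+}$, and symmetrically for $f=a$. Carrying out this bound cleanly, uniformly in the starting point $x \in I$ and with polynomial (not exponential) dependence on $\tilde M$, is the technical heart of the proof; the generous prefactor $(c-a)^3((c-a)+\tilde M)$ gives enough slack to handle the combinatorics and the boundary corrections without having to optimise the argument.
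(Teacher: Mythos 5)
Your first inequality is handled exactly as in the paper (it is immediate from $f\in\{a,c\}$). For the second inequality you take a genuinely different route. The paper's proof extends the chain to $I'=[a,c+1]$ with $V(c+1)=V(b)$ (so the equilibrium measure puts appreciable mass at the added site), passes to continuous time, bounds the spectral gap from below via Miclo's discrete Hardy inequality, applies the Saloff-Coste $L^\infty$ estimate $|\Po^x[\hat X_s=c+1]-\mu(c+1)|\leq(\mu(c+1)/\mu(x))^{1/2}e^{-\lambda s}$ to show that after time $s\asymp(c-a)^2((c-a)+\tilde M)e^H$ the chain has hit $c+1$ with probability $\geq 1/(8(c-a))$, iterates, and transfers back to discrete time. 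Your plan instead bounds $\max_{x\in I}\Eo^x[T_f]$ by the explicit birth-death mean hitting-time formula and then bootstraps to an exponential tail via Markov plus the strong Markov property. That structure is sound and arguably more elementary: it dispenses entirely with continuous time, spectral gaps, and the $L^\infty$ heat-kernel bound, at the cost of needing a direct hand on $\Eo^x[T_f]$. The two approaches are of comparable strength for this proposition.

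The part you should not wave away, however, is the ``boundary case'' in Step 1, and your proposed fix does not work as stated. After you write $\Eo^x[T_c]=\sum_{j=x}^{c-1}e^{V(j)}\pi([a,j])$ and reduce to $V(j)-\min_{k\in[a-1,j]}V(k)$, the case in which the minimum is attained at $k=a-1$ (where $V(a-1)=0$) produces a term of order $e^{V(j)}$, and $V(j)$ is only controlled through $V(j)\leq V(a)+H_+$, so what you face is an \emph{exponential} factor $e^{V(a)-V(a-1)}=\rho_a$, not something absorbable by the \emph{polynomial} factor $\tilde M$ in the prefactor (indeed $\tilde M$ is defined only over $[a,c]$ and need not see $V(a)-V(a-1)$ at all). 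If $\omega_a$ is small the walk spends a long time stuck at $a$, the $j=a$ summand alone is $e^{V(a)}\pi(a)\geq e^{V(a)-V(a-1)}$, and the claimed mean bound $\leq C(c-a)^3((c-a)+\tilde M)e^H$ fails. To repair your argument you need either to interpret $H_+$ with the min ranging over $[a-1,x)$ rather than $[a,x)$, or to add a hypothesis controlling $V(a)-V(a-1)$ and $V(c)-V(c+1)$. (It is only fair to note that the paper's own proof glosses over the same boundary term in the line $\sum_{y=a}^x(e^{-V(y)}+e^{-V(y-1)})\leq 2\sum_{y=a}^x e^{-V(y)}$, which silently drops the $e^{-V(a-1)}=1$ contribution, so this is a weakness of the proposition's statement as much as of either proof.) One small additional point: your iteration $\Po^x[T_f>2kM]\leq 2^{-k}$ uses that $X_{2M}\in I$ whenever $T_f>2M$; that is automatic for the reflected chain but not for the unrestricted one, so you should say explicitly that you prove the middle bound for the reflected chain and deduce the outer bound for the unrestricted chain from the pathwise equality $T_{\{a,c\}}^{\text{unres}}=T_{\{a,c\}}^{\text{refl}}$.
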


\begin{proof}
The first inequality is trivial, we only need to prove the second one. In the following we will suppose that $H=H_+$ (so that $f=c$), otherwise we can apply the same argument by inverting the space. We denote by~$b$ the leftmost point in the interval $[a,c]$ with minimal potential.

We extend the Markov chain on the interval~$I$ to a Markov chain on the interval $I'=[a,c+1]$ in the following way.
Let $V(c+1):=V(b)$, yielding  $\omega_{c+1}=\bigl(1+e^{-(V(c)-V(b))}\bigr)^{-1}$.  Again, with probability $1- \omega_{c+1}$, the Markov chain goes from $c+1$ to $c$, and with probability $\omega_{c+1}$, the Markov chain just stays in $c+1$.

%

Let us denote by ${\hat X}_t$ the continuous time version of the Markov chain on~$I'$
(i.e., the transition probabilities become transition rates). 
The reason for considering continuous time is the following:
we are going to use spectral gap estimates, and these are better suited for 
continuous time in this context (mainly due to the fact that the discrete-time
random walk is periodic).
We define the probability measure~$\mu$ on~$I'$ which is reversible (and therefore invariant) for~${\hat X}$ in the following way
\[
\mu(x)=\pi(x)\Bigl(\sum_{y\in I'} \pi(y)\Bigr)^{-1},
\]
for all $x\in I'$, where $\pi$ is as in (\ref{pidef}) with the potential defined above, satisfying $V(a-1) =0$ and $V(c+1)=V(b)$. 
Now, the goal is to bound the spectral gap $\lambda(I')$ from below. 
We can do this using a result of~\cite{Miclo}:
\begin{equation}
\label{micloprop}
 \frac 1 {4B^{I'}} \leq \lambda(I') \leq \frac 2 {B^{I'}},
\end{equation}
where $B^{I'}=\min_{i \in I'} ( B_-^{I'}(i) \wedge B_+^{I'}(i))$ and 
\begin{align*}
 B_+^{I'}(i) &= \max_{x>i}\left(\sum_{y=i+1}^x (\mu(y)(1-\omega_y))^{-1}\right)\mu[x,c+1],\quad i \in [a,c]\\
 B_-^{I'}(i) &= \max_{x<i}\left(\sum_{y=x}^{i-1} (\mu(y)\omega_y)^{-1}\right)\mu[a,x], \quad i\in [a+1, c+1]
\end{align*}
and $ B_+^{I'}(c+1)=  B_-^{I'}(a)= 0$.
Obviously, we have $B^{I'} \leq B_-^{I'}(c+1)$. 
Moreover, since~\eqref{pidef} implies that $\omega_x\pi(x)=e^{-V(x)}$ for any $x\in I'$, we can write
\begin{align*}
B_-^{I'}(c+1) &= \max_{x\leq c} \Bigl(\sum_{y=x}^{c} \frac{1}{\omega_y\pi(y)}\Bigr)
           \Bigl(\sum_{y=a}^x \pi(y)\Bigr) \\
&= \max_{x\leq c} \Bigl(\sum_{y=x}^{c} e^{V(y)}\Bigr)
           \Bigl(\sum_{y=a}^x (e^{-V(y)}+e^{-V(y-1)})\Bigr) \\
& \leq 2 \max_{x\leq c}\Bigl(\sum_{y=x}^{c} e^{V(y)}\Bigr)\Bigl(\sum_{y=a}^x e^{-V(y)}\Bigr) \\
 &\leq 2(c-a)^2 e^H.
\end{align*}
This yields
\[
\lambda(I') \geq \frac 1 {8(c-a)^2 e^H}.
\]

Using Corollary 2.1.5 of \cite{Saloff}, we obtain that for $x,y\in I'$ and $s>0$
\[
\abs{\Po^x[{\hat X}_s=y] - \mu(y)} \leq \Bigl(\frac{\mu(y)}{\mu(x)}\Bigr)^{1/2} \exp(-\lambda(I')s).
\]

We apply this formula for $y=c+1$. Note that, using~(\ref{pidef}), we obtain that
$(\mu(c+1)/\mu(x))^{1/2} \leq \sqrt{2}e^{{\tilde M}/2}$ for any $x\in(a,c)$. 
So, for $s:=C_1(c-a)^2((c-a)+{\tilde M})e^H$, if~$C_1>4$ is chosen large enough
\[
\abs{\Po^x[{\hat X}_s=c+1] - \mu(c+1)} \leq \sqrt{2}e^{-C_1(c-a)/8} < \frac {1}{8(c-a)},
\]
and, since $\mu(c+1) \geq 1/2(c+1-a) \geq 1/(4(c-a))$, we obtain
\[
\min_{x\in I'} \Po^x[{\hat X}_s=c+1] \geq \frac 1 {8(c-a)}.
\]

Let us divide $[0,t]$ into $N:=\left\lfloor  t/s \right\rfloor$ subintervals. Using the above inequality and Markov's property we obtain (${\hat T}$ stands for the hitting time with respect
to~${\hat X}$)
\begin{align*}
\Po^x[{\hat T}_{c}>t] &\leq \Po^x[{\hat T}_{c+1}>t]\\
 &\leq \Po^x[{\hat X}_{sk}\neq c+1, k=1,\ldots, N]\\
&\leq\Bigl(1-\frac 1 {8(c-a)}\Bigr)^N \\
 &\leq \exp\Bigl(-\frac{N}{8(c-a)}\Bigr)\\
  &\leq \exp\Bigl(-\frac t{8C_1(c-a)^3((c-a)+{\tilde M})e^H}\Bigr)\exp\Bigl(\frac 1 {8(c-a)}\Bigr).
\end{align*}

The estimates on the continuous time Markov chain transfer to discrete time. Indeed, there exists a family $({\mathbf e}_i)_{i\geq 1}$ of exponential random variables of parameter~$1$, such that the $n$-th jump of the continuous time random walk occurs at $\sum_{i=1}^n {\mathbf e}_i$. These random variables are independent of the environment and the discrete-time random walk. Moreover, 
$P[{\mathbf e}_1+\cdots+ {\mathbf e}_n \geq n]$ 
$\geq 1/3$, for all~$n$. So, for any~$t$,
\[
\frac{1}{3} \PP[ T_{c} \geq t] \leq \PP[ T_{c} \geq t] \PP[{\hat T}_{c}\geq T_{c}] = \PP[ T_{c} \geq t, {\hat T}_{c}\geq T_{c} ]\leq \PP[{\hat T}_{c} \geq t],
\]
 Hence, we have for all $v > 0$
\[
\max_{x\in I} \Po^x\Bigl[\frac{{ T}_{c}}{8(1+v) C_1(c-a)^3((c-a)+{\tilde M})e^H}>u \Bigr] 
\leq \Bigl(3 e^{1/8}e^{-v u}\Bigr)e^{- u},
\]
for all $u\geq 0$. Hence for $u\geq 1$, choosing~$v$ large enough 
in such a way that $3\exp(\frac{1}{8}-v)\leq 1$, we obtain the result with ${\gamma}_1=8C_1(1+v)$.
\end{proof}
Next, we recall the following simple upper bound on hitting probabilities:
\begin{proposition}
\label{cost_to_climb}
There exists $\gamma_2$ such that for any $x,y$ and $h\in[x,y]$ we have
\[
 \Po^x[T_y<s] \leq \gamma_2(1+s)\frac{\pi(h)}{\pi(x)}.
\]
\end{proposition}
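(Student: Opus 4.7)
My plan is to combine the nearest-neighbour structure of the walk with the reversibility of the measure $\pi$ defined in~(\ref{pidef}), in three short steps.

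First, since $X_n$ moves by unit steps on $\Z$ and $h$ lies between $x$ and $y$, every trajectory from $x$ that reaches $y$ must first visit $h$; hence $T_h\leq T_y$, so it suffices to prove the claim with $T_y$ replaced by $T_h$.

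Second, I would use the pointwise inequality $\1{T_h<s}\leq \sum_{t=0}^{\lfloor s\rfloor}\1{X_t=h}$, which holds because on $\{T_h<s\}$ the walk visits $h$ at least once during $\{0,1,\ldots,\lfloor s\rfloor\}$. Taking expectations gives
\[
\Po^x[T_h<s]\leq \sum_{t=0}^{\lfloor s\rfloor} P^t(x,h).
\]
Third, the reversibility identity $\pi(x)P^t(x,h)=\pi(h)P^t(h,x)$, which follows directly from the detailed balance relation $\omega_x\pi(x)=(1-\omega_{x+1})\pi(x+1)$ stated after~(\ref{pidef}), gives $P^t(x,h)\leq \pi(h)/\pi(x)$ for every $t\geq 0$. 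Summing the $(\lfloor s\rfloor+1)$ terms yields
\[
\Po^x[T_h<s]\leq (1+\lfloor s\rfloor)\frac{\pi(h)}{\pi(x)}\leq (1+s)\frac{\pi(h)}{\pi(x)},
\]
which establishes the proposition with $\gamma_2=1$.

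I do not expect any real obstacle: the only ingredients are the nearest-neighbour property and the standard reversibility identity, both of which are immediate from the set-up. The argument applies verbatim to the walk reflected at the endpoints of any finite interval, because the same $\pi$ is reversible for the restricted chain defined at the beginning of Section~\ref{s_confinement} (the added self-loops trivially satisfy detailed balance); this is exactly what is needed when combining the estimate with Proposition~\ref{spectral}.
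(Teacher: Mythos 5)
Your proof is correct, and it takes a genuinely different (and simpler) route than the paper's. The paper adapts Lemma~3.4 of \cite{CP}: it stays in the continuous-time setting already introduced for Proposition~\ref{spectral}, bounds $\Po^x[\hat T_h<s]$ by $e\int_0^{s+1}\Po^x[\hat X_u=h]\,du$ via an occupation-time argument (using that an exponential holding time exceeds $1$ with probability $e^{-1}$), then applies reversibility to the integrand and finally transfers back to discrete time. You instead work directly in discrete time: $T_h\leq T_y$ by the nearest-neighbour property, the occupation-time bound $\1{T_h<s}\leq\sum_{t\leq\lfloor s\rfloor}\1{X_t=h}$, and the $t$-step detailed-balance identity $\pi(x)P^t(x,h)=\pi(h)P^t(h,x)\leq\pi(h)$. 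This avoids the continuous-time detour and the discrete-to-continuous transfer entirely, and it even yields the cleaner constant $\gamma_2=1$; the only reason the paper works in continuous time here is for uniformity with the surrounding spectral-gap estimates, not out of necessity. Your remark that the argument carries over verbatim to the walk reflected at the endpoints of a finite interval (the self-loops preserve detailed balance for the same $\pi$) is also correct and is exactly what is needed, since Proposition~\ref{cost_to_climb} is applied to such restricted chains.
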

\begin{proof}
We can adapt Lemma~3.4 of~\cite{CP} (which used a uniform ellipticity condition). We remain in the continuous time setting and, 
considering the 
event that $y$ is visited before time $s$ and left again at least one time unit later (on which $\int_0^{s+1} \1{\hat X_u=y}du \geq 1$),
we have 
\begin{equation}
\int_0^{s+1} \Po^x[\hat X_u=y]du \geq \Po^x[\hat T_y<s]\cdot P[{\mathbf e}_1 \geq 1]
\end{equation}
where  ${\mathbf e}_1$ is an exponential random variable of parameter~$1$.
Hence
\begin{align*}
\Po^x[\hat T_y<s] & \leq \Po^x[\hat T_{h}<s] \\
             & \leq e \int_0^{s+1} \Po^x[\hat X_u=h]du \\
             & = e \int_0^{s+1} \frac{\pi(h)}{\pi(x)} \Po^{h}[\hat X_u=x]du \\
             & \leq e  (s+1) \frac{\pi(h)}{\pi(x)}.
\end{align*}

Again, one can easily transfer the estimates on the continuous time Markov chain to discrete time.
\end{proof}

Let us now introduce
\begin{align*}
H_+^*&=\max_{x\in[a+1, c-1]} \Bigl(\max_{y \in [x,c-1]} V(y) -\min_{y\in [a+1,x)} V(y)\Bigr) ,\\
H_-^*&=\max_{x\in[a+1, c-1]} \Bigl(\max_{y \in [a+1,x]} V(y) -\min_{y\in (x,c-1]} V(y)\Bigr),
\end{align*}
and
\[
H^*=H_+^*\wedge H_-^*.
\]

We obtain a lower bound on the confinement probability in the following proposition.
Recall that~$b$ is the leftmost point in the interval $[a,c]$ with minimal potential.
\begin{proposition}
\label{Lboundconf}
Suppose that $c-1$ has maximal potential on $[b,c-1]$ and $a$ has maximal potential on $[a,b]$.
Then, there exists ${\gamma}_3>0$, such that for all $u \geq 1$
\[
\min_{x\in I} \Po^x\Bigl[ {\gamma}_3 \ln(2(c-a)) \frac{T_{\{a,c\}}}{e^{H^*}}\geq u\Bigr] \geq \frac 1 {2(c-a)}e^{-u},
\]
if $e^{H^*}\geq 16{\gamma}_2$.
\end{proposition}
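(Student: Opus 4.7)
The plan is to combine three ingredients: (i) from any interior starting point the walk reaches the valley bottom~$b$ before hitting $\{a,c\}$ with probability at least $1/(c-a)$; (ii) starting from~$b$, the walk survives in~$I$ for a time of order $e^{H^*}$ with probability at least~$1/2$; (iii) an iteration via the Markov property converts this single-scale survival estimate into an exponential-in-$u$ lower bound.

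For~(i) I apply the exit-probability formula~(\ref{exit_probs}). For $x\in[b,c-1]$,
\[
P_\omega^x[T_b<T_c]=\frac{\sum_{y=x}^{c-1}e^{V(y)}}{\sum_{y=b}^{c-1}e^{V(y)}},
\]
and the hypothesis that $V(c-1)=\max_{[b,c-1]}V$ makes the numerator at least $e^{V(c-1)}$ and the denominator at most $(c-b)e^{V(c-1)}$, giving a ratio $\ge 1/(c-b)\ge 1/(c-a)$. A symmetric computation on $[a+1,b]$ using the hypothesis on $V(a)$ gives the same bound, so $\min_{x\in(a,c)}P_\omega^x[T_b<T_{\{a,c\}}]\ge 1/(c-a)$.

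For~(ii) I set $s_0:=e^{H^*}/(16\gamma_2)$, which is at least~$1$ by the assumption $e^{H^*}\ge 16\gamma_2$. I apply Proposition~\ref{cost_to_climb} twice: once with $y=c$ and a bottleneck~$h$ near the right-hand maximum of~$V$ (by hypothesis at~$c-1$), and once with $y=a$ and~$h$ at~$a$. Using the defining formula $\pi(h)=e^{-V(h)}+e^{-V(h-1)}$, the monotonicity hypotheses, and the fact that $V(b)$ is the global minimum on $[a,c]$, one shows that $\pi(h)/\pi(b)$ is at most a constant multiple of $e^{-H_+^*}$ (respectively $e^{-H_-^*}$). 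Both of the two probabilities are therefore bounded by $2\gamma_2(1+s_0)e^{-H^*}$, so by the choice of~$s_0$,
\[
P_\omega^b[T_{\{a,c\}}\le s_0]\le P_\omega^b[T_c\le s_0]+P_\omega^b[T_a\le s_0]\le 4\gamma_2(1+s_0)e^{-H^*}\le \tfrac12.
\]

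For~(iii) I combine (i) and (ii) using the strong Markov property at~$T_b$: on the event $\{T_b<T_{\{a,c\}}\}$ the walk restarts at~$b$, so for any $y$ in the interior of~$I$,
\[
P_\omega^y[T_{\{a,c\}}>s_0]\ge P_\omega^y[T_b<T_{\{a,c\}}]\cdot P_\omega^b[T_{\{a,c\}}>s_0]\ge\frac{1}{2(c-a)}.
\]
Iterating this estimate via the ordinary Markov property at the times $s_0,2s_0,\ldots,(k-1)s_0$ yields $P_\omega^x[T_{\{a,c\}}>ks_0]\ge(2(c-a))^{-k}$ for every $k\ge 1$ and every interior~$x$. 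Writing $t=ks_0$, this rearranges to
\[
P_\omega^x[T_{\{a,c\}}>t]\ge \frac{1}{2(c-a)}\exp\!\Bigl(-\frac{16\gamma_2\ln(2(c-a))}{e^{H^*}}\,t\Bigr),
\]
and choosing $t=ue^{H^*}/(\gamma_3\ln(2(c-a)))$ with $\gamma_3:=16\gamma_2$ gives exactly the desired lower bound $(2(c-a))^{-1}e^{-u}$.

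The main obstacle is step~(ii): the quantity $\pi(h)/\pi(b)$ involves $V$ at two consecutive sites, so getting the clean bound $\pi(h)/\pi(b)\le Ce^{-H_\pm^*}$ from Proposition~\ref{cost_to_climb} is not automatic. The hypotheses that $V(c-1)$ and $V(a)$ are the respective boundary maxima are precisely what align the peaks with the $\pi$-bottleneck, but some care is needed in edge cases (e.g.\ when $b$ coincides with one end of the interval, or when the peak has a markedly smaller neighbour) to ensure the factor~$2$ in the final constant is correct.
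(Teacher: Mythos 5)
Your proposal is correct and essentially reproduces the paper's own proof: both use the exit-probability formula to get $\min_x \Po^x[T_b<T_{\{a,c\}}]\geq(c-a)^{-1}$, apply Proposition~\ref{cost_to_climb} through a bottleneck site to show $\Po^b[T_{\{a,c\}}<s_0]\leq 1/2$ for $s_0 = e^{H^*}/(16\gamma_2)$, and then iterate via the Markov property to produce the exponential lower bound. The bottleneck inequality you flag as delicate is exactly what the paper also asserts without elaboration (namely $\min_{b<h<c-1}\pi(h)/\pi(b)\leq 2e^{-H_+^*}$ and its mirror), so your caveat is reasonable but does not represent a gap relative to the paper's own argument.
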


\begin{proof}
Noticing that
\[
\min_{b<h<c-1} \frac{\pi(h)}{\pi(b)}\leq 2 e^{-H_+^*} \text{ and }\min_{a+1<h<b} \frac{\pi(h)}{\pi(b)}\leq 2 e^{-H_-^*},
\]
we can apply Proposition~\ref{cost_to_climb} to obtain that 
\begin{equation}
\label{Lboundstep1}
\text{for all }s \geq 1,\qquad \Po^b[T_{\{a,c\}}<s]\leq 8{\gamma}_2 s e^{-H^*},
\end{equation}
Hence for $s=e^{H^*}/(16 \gamma_2)\geq 1$, the right-hand side of the previous inequality equals~$1/2$.

Now, using the exit probability formula~(\ref{exit_probs}), we obtain that
\begin{equation}
\label{Lboundstep2}
\min_{x\in I} \Po^x[T_b<T_{\{a,c\}}] \geq (c-a)^{-1}.
\end{equation}
Denoting $N=\left\lceil t/s \right\rceil$, we obtain for $x\in I$,
\begin{align*}
 \Po^x[T_{\{a,c\}}>t]    & \geq (2(c-a))^{-(N+1)} \nonumber\\
             & \geq \exp\Bigl( - \frac{ C_2 t \ln (2(c-a))}{e^{H^*}} \Bigr)(2(c-a))^{-1}.
\end{align*}
We used the following reasoning in the above calculation.
Start from any $x\in (a,c)$, by~(\ref{Lboundstep2}) the particle hits~$b$
before~$\{a,c\}$ with probability at least $(c-a)^{-1}$. Then, during~$s$
time units, $\{a,c\}$ will not be hit with probability at least~$1/2$.
After that, the particle is found in some $x'\in (a,c)$ and at least~$s$
time units elapsed from the initial moment. So the cost of preventing 
the occurrence of $T_{\{a,c\}}$ during any time interval of length~$s$
is at most $(2(c-a))^{-1}$.
The result follows for ${\gamma}_3$ large enough.
\end{proof}

Our main application of Proposition~\ref{spectral} and Proposition~\ref{Lboundconf}, will be to control the exit times of valleys, more precisely we will be able to give upper bounds on the tail of $T_{\{K_i,K_{i+1}\}}$  and lower bounds on the tail of $T_{\{K_i-1,K_{i+1}+1\}}$ in terms of $H_i$.

\section{Induced random walk}
\label{s_induced_rw}

Let us denote $(s_k(n))_{k\geq 0}$ the sequence defined by
\begin{align*}
s_0(n)&=0,\\
s_{i+1}(n) &=\min \{ j\geq s_i(n):\ X_j \in \{K_l(n),l\geq 0 \} \}.
\end{align*}

Then, we define $Y_i=X_{s_i}$, the embedded random walk with state space $\{K_l,l\geq 0\}$, enumerating the successive valleys we visit and $l_n(\nu)=\max \{i : s_i \leq T_{n^{\nu}}\}$ the numbers of steps made by the embedded random walk to reach $[n^{\nu}; \infty)$. For the reflected case, we will use the same notation, replacing  $\{K_l,l\geq 0\}$ with $\{\widetilde{K}_l,l\geq 0\}$ defined in~(\ref{tilddef}).

Recall~(\ref{indices}) and let us denote
\[
\xi^{\nu}(i)= \card \{ j \in [0, l_n(\nu)]: Y_j=K_{i+1}, Y_{j+1}=K_{i} \} \text{ for } i=i_0+1, \ldots , i_1-1,
\]
and in order to carry over the proofs to the reflected case
\[
\widetilde{\xi}^{\nu}(i)= \card \{ j \in [0, l_n(\nu)]:  Y_j=\widetilde{K}_{i+1}, Y_{j+1}=\widetilde{K}_{i} \} \text{ for } i=i_0+1, \ldots , i_1-1.
\]

 Moreover, we introduce the real time elapsed, i.e.~in the clock of $X_n$, during the first left-right crossing of the $i$-th valley
\[
T^{\text{next}}(i)=T_{K_{i+1}}\circ\theta(\text{next}(i))-\text{next}(i),
\]
where $\theta$ denotes the time-shift for the random walk and
\[
\text{next}(i)=\inf\{n\geq 0: X_n=K_i, T_{K_{i+1}}\circ \theta(n)<T_{K_{i-1}}\circ \theta(n)\}.
\]
  In this way, each time the embedded random walk backtracks, $T^{\text{next}}(i)$ is the time the walk will need to make the necessary left-right crossing of the corresponding valley. Recall~(\ref{Ndef}). Conditionally on $(Y_i)_{i\geq 1}$ we have that (``dir'' stands for ``direct'', and ``back'' stands for ``backtrack'')
\begin{equation}
\label{decompos}
T_{n^{\nu}} = \Tinit+\Tdir+\Tback+\Tleft+\Tright,
\end{equation}
where 
\begin{align*}
 \Tinit &=
 \begin{cases}
  T_{K_{i_0+1}}, &\text{ if $T_{K_{i_0+1}}<T_{K_{i_0}}$, }\\
  T_{K_{i_0}}+T^{\text{next}}(i_0)\circ \theta(T_{K_{i_0}}), &\text{ else,}
 \end{cases}  \\
 \Tleft &=
  \begin{cases}
   \card\{i\leq T_{n^{\nu}}:  X_i < K_1\}\qquad \qquad \qquad \qquad \ \ \ \ \text{ without reflection,}\\  \\
   \sum_{j=0}^{l_n(\nu)} 
\1{Y_j=\widetilde{K}_{i_0+1},Y_{j+1}=\widetilde{K}_{i_0}} &\\ 
         \ \ \times \Bigl(T_{K_i}\circ \theta(s_j)-s_j+T^{\text{next}}(i) \circ (T_{K_i}\circ \theta(s_j))\Bigr)\text{ with reflection,}
 \end{cases}\\
\Tright &=T_{n^{\nu}}\circ \theta(\text{next}^*(i_1))-\text{next}^*(i_1),\\
  \Tdir &=\sum_{i=i_0+1}^{i_1-1} T^{\text{next}}(i)\circ \theta(T_{K_{i}}),\\
 \Tback &=\begin{cases}
\sum_{i=1}^{i_1-1}\sum_{j=0}^{l_n(\nu)} \1{Y_j=K_{i+1},Y_{j+1}=K_i} \\
         \ \ \times \Bigl(T_{K_i}\circ \theta(s_j)-s_j+T^{\text{next}}(i) \circ (T_{K_i}\circ \theta(s_j))\Bigr) \text{ without reflection,}\\    \\
\sum_{i=i_0+1}^{i_1-1}\sum_{j=0}^{l_n(\nu)} \1{Y_j=K_{i+1},Y_{j+1}=K_i} \\
         \ \ \times \Bigl(T_{K_i}\circ \theta(s_j)-s_j+T^{\text{next}}(i) \circ (T_{K_i}\circ \theta(s_j))\Bigr) \text{ with reflection},\\   
\end{cases}
\end{align*}
where $\text{next}^*(i_1)=\inf\{n\geq 0: X_n=K_{i_1}, T_{n^{\nu}}\circ \theta(n)<T_{K_{i_1-1}}\circ \theta(n)\}$. 
In the reflected case, replace~$K_i$ with $\widetilde{K}_i$ in all the above definitions except for that of~$\Tleft$. This decomposition is illustrated on Figure~\ref{f_decompo} for the non-reflected case.

\begin{figure}
\centering
\includegraphics[width=\textwidth]{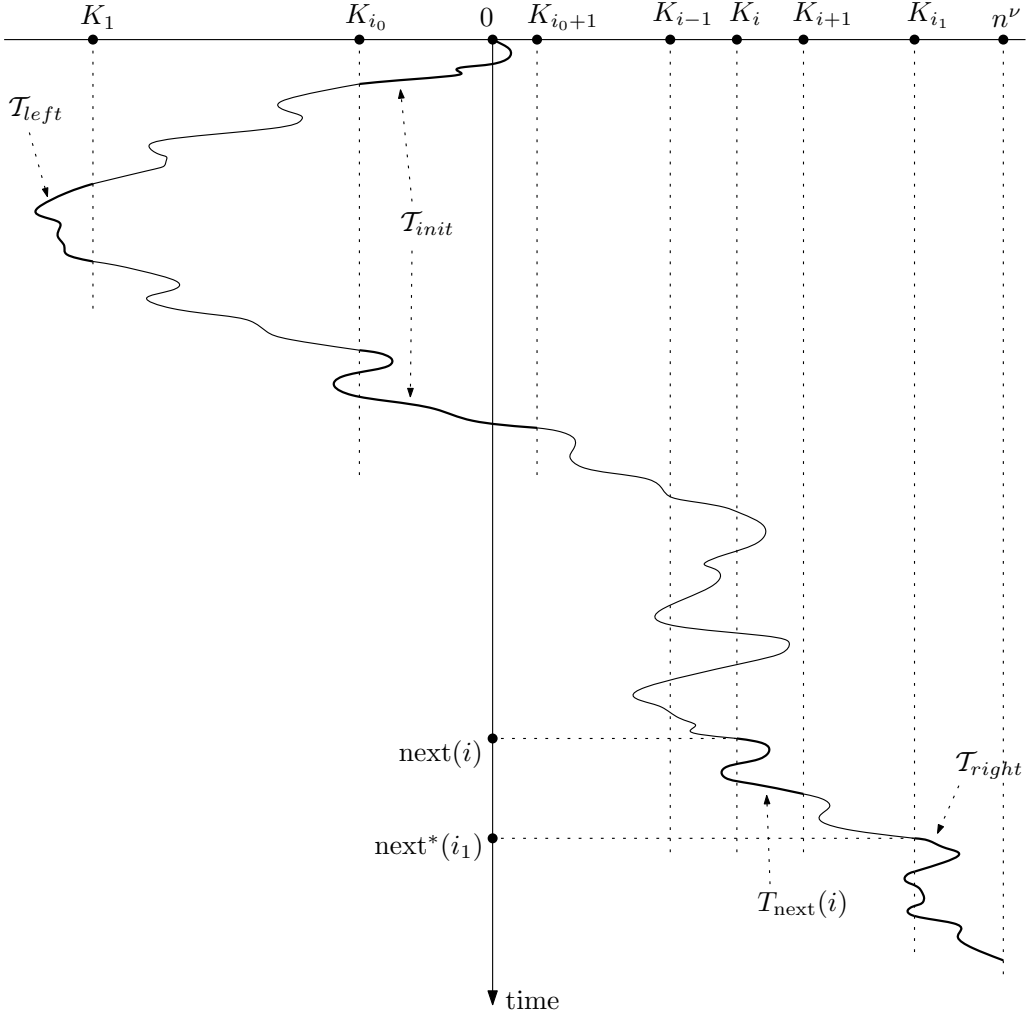}
\caption{On the decomposition~(\ref{decompos}) of $T_{n^{\nu}}$}
\label{f_decompo}
\end{figure}

In the non-reflected case, we have the following equalities in law (for each~$\omega$):
\begin{align}
 \Tinit &=\overline{\tau}(0),\label{defTinit}\\
 \Tright &=\overline{\tau}(n^{\nu}),\label{defTright}\\
 \Tdir &=\sum_{i=i_0+1}^{i_1-1} \tau_+^{(0)}(i),\label{defTdirect}\\
 \Tback &=\sum_{i=1}^{i_1-2} ( \tau_+^{(1)}(i)+\tau_-^{(1)}(i)+ \cdots + \tau_+^{(\xi^{\nu}(i))}(i)+\tau_-^{(\xi^{\nu}(i))}(i)) 
\label{defTbacktrack}\\ \nonumber & \qquad+ \sum\limits_{j=1}^{\xi^\nu(i_1 -1)} \tau_+^{(j)}(i_1-1)+ \tau_-^{\rm{last}, (j)}, 
\end{align}
where $\tau_+^{(j)}(i)$,  $\tau_-^{(j)}(i)$ and $\tau_-^{\rm{last}, (j)}$ are independent sequences of i.i.d.\ random variables described as follows. First, $\tau_+^{(j)}(i)$ is a sequence of independent random variables with the same law as $T_{K_{i+1}}$ under $\Po^{K_i}[~\cdot\mid T_{K_{i+1}}<T_{K_{i-1}}]$. Then, $\tau_-^{(j)}(i)$ is a sequence of independent random variables with the same law as $T_{K_{i}}$ (under $\Po^{K_{i+1}}[~\cdot\mid T_{K_{i}}<T_{K_{i+2}}]$) and $\tau_-^{\rm{last},j}$ is a sequence of independent random variables with the same law as $T_{K_{i_1-1}}$ under $\Po^{K_{i_1}}[~\cdot\mid T_{K_{i_1-1}} < T_{n^{\nu}}]$.
Clearly, the random variable $\overline{\tau}(0)$ (respectively, $\overline{\tau}(n^{\nu})$) has the same law as $T_{K_{i_0+1}}$ (respectively, $T_{n^{\nu}}$) under $\Po[~\cdot\mid T_{K_{i_0+1}}<T_{K_{i_0-1}}]$ (respectively, $\Po^{K_{i_1}}[~\cdot\mid T_{n^{\nu}}<T_{K_{i_1-1}}]$).

In the reflected case, we simply replace $K_i$ by $\widetilde{K}_i$, $\xi^{\nu}(i)$ by $\widetilde{\xi}^{\nu}_i$ and $\omega$ by $\tilde{\omega}$. 

We want to give bounds on the number of backtracks between valleys before the walk reaches $\lfloor n^{\nu}\rfloor$. Denote
\begin{equation}
\label{Bdef}
\Bk(n):=\card\{ i\geq 1 : s_{i+1}(n) \leq T_{n^{\nu}},\ Y_{i+1}<Y_i\}=\sum_{i=1}^{i_1-1} \xi^{\nu}(i).
\end{equation}

By~(\ref{exit_probs}), we obtain that for $i\leq i_1$, $\PP$-a.s.\ for~$n$ large enough,
\begin{align}
\Po^{K_i}[T_{K_{i+1}}>T_{K_{i-1}}]&=\Bigl(\sum_{j=K_{i-1}}^{K_{i+1}-1} e^{V(j)}\Bigr)^{-1}\sum_{j=K_i}^{K_{i+1}-1} e^{V(j)}\label{crossingprob}\\
&\leq \max_{i\leq n} (K_{i}-K_{i-1}) \frac{(\ln n)^{2/\kappa}}{n^{2/(1\wedge\kappa)}}\nonumber\\
&\leq n^{-3/2}, \nonumber
\end{align}
since $\max_{i\leq n} (K_{i+1}-K_{i})\leq (\ln n)^2$ on $A(n)$ and, due to Lemma \ref{depthvalley}, with the same argument as for (\ref{weightbottom}), we have $V(K_{i-1})-V(x) \geq \frac{2}{1\wedge \kappa} \ln n -\frac{2}{\kappa}\ln \ln n$ for $x\in [K_i,K_{i+1}]$.

Using~(\ref{exit_probs}) and~(\ref{bound_pot}), we obtain a lower bound: for $\omega \in A(n) \cap F(n) \cap G_1(n)$ we have
\begin{equation}
\label{crossingprob2}
\Po^{K_i}[T_{K_{i+1}}>T_{K_{i-1}}] \geq \frac 1 {K_{i+1}-K_{i-1}} \frac 1 {e^{V(K_{i-1})-V(K_{i+1})}} \geq n^{-(1+2\gamma_0)}.
\end{equation}

During the first~$3n$ steps of the embedded random walk there are two cases, either the walk has reached $n^{\nu}$ or there are at least $n$ steps back. But then if $n^{\nu}$ is reached in less than $3n$ steps, $\Bk(n)$ is stochastically dominated by a $\text{Bin}(3n,n^{-3/2})$ by~(\ref{crossingprob}). Moreover, we get for $f(\cdot)$ such that $f(n)=O(n)$, $\PP$-a.s.\ for~$n$ large enough,
\[
P_{\omega}[\Bk(n)\geq f(n)]  \leq \binom{3n}{n} \Bigl(\frac1 {n^{3/2}}\Bigr)^{n}+P\Bigl[\text{Bin}(3n,n^{-3/2})\geq f(n)\Bigr],
\]
and so using Stirling's formula and Chebyshev's exponential inequality, $\PP$-a.s.\ for~$n$ large enough,
\begin{align}
P_{\omega}[\Bk(n)\geq f(n)]  &\leq \exp(-C_1 n) +C_2\exp(-f(n))
\label{boundback} \\
& \leq C_3\exp(-f(n)). \nonumber
\end{align}

\section{Quenched slowdown}
\label{s_q_slowdown}
In this section, we prove Theorem~\ref{t_q_slow}. Before going into technicalities, 
let us give an informal argument about why we obtain different answers
in Theorem~\ref{t_q_slow}. 
\begin{figure}
\centering
\includegraphics[width=12cm]{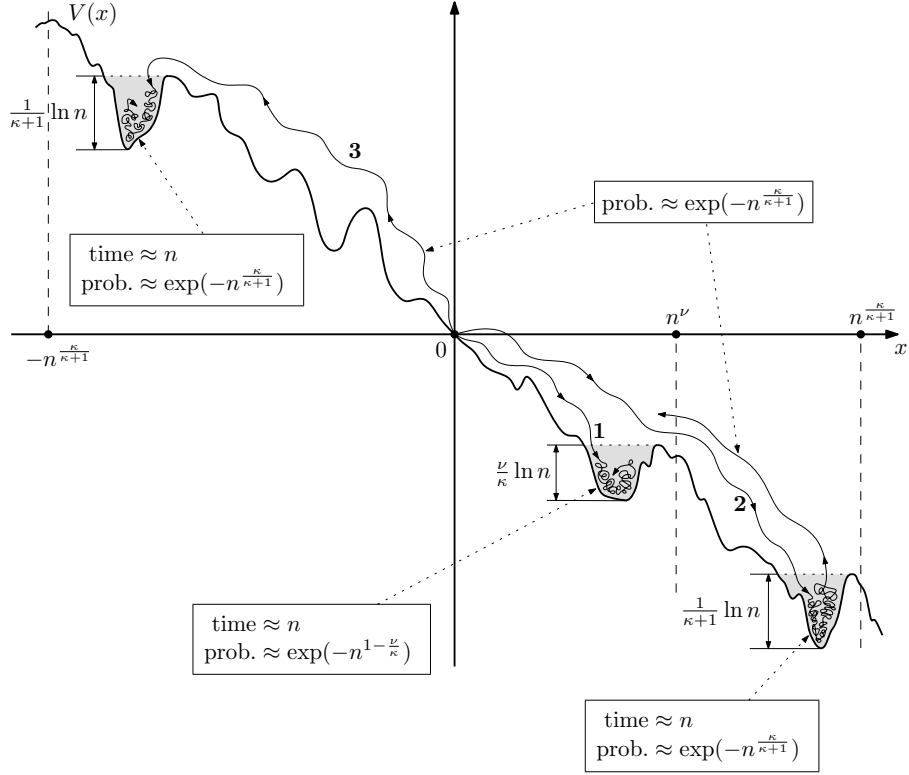}
\caption{The three strategies for the slowdown}
\label{f_strategies}
\end{figure}

Suppose that $\frac{\kappa}{\kappa+1}<1-\frac{\nu}{\kappa}$, or equivalently,
$\nu<\frac{\kappa}{\kappa+1}$. 
Consider the three strategies depicted on Figure~\ref{f_strategies}:
\begin{itemize}
 \item[{\bf 1:}] The particle goes to the biggest valley in the interval
$[0,n^\nu]$, and stays there up to time~$n$.
 \item[{\bf 2:}] The particle goes to the biggest valley in the interval
$[0,n^{\frac{\kappa}{\kappa+1}}]$, stays there up to time~$n-n^{\frac{\kappa}{\kappa+1}}$,
and then goes back to the interval $[0,n^\nu]$.
 \item[{\bf 3:}] The particle goes to the biggest valley in the interval
$[-n^{\frac{\kappa}{\kappa+1}},0]$ (so that typically it has to go
roughly $n^{\frac{\kappa}{\kappa+1}}$ units to the left), and stays there up to time~$n$.
\end{itemize}
By Lemmas~\ref{depthvalley} and~\ref{depthvalley2}, the biggest valley
in the interval $[0,n^\nu]$ has depth of approximately~$\frac{\nu}{\kappa}\ln n$.
Using Proposition~\ref{Lboundconf}, we obtain that the probability of staying there up to time~$n$
is roughly $\exp(-n^{1-\frac{\nu}{\kappa}})$. As for the strategy~{\bf 2},
analogously we find that the biggest valley
in the interval $[0,n^{\frac{\kappa}{\kappa+1}}]$ has depth around~${\frac{1}{\kappa+1}}\ln n$,
and the probability of staying there is roughly $\exp(-n^{\frac{\kappa}{\kappa+1}})$. Then,
the probability of backtracking is again around $\exp(-n^{\frac{\kappa}{\kappa+1}})$.
The situation with the strategy~{\bf 3} is the same as that with strategy~{\bf 2}
(for the strategy~{\bf 3}, we first have to backtrack and then to stay in the valley,
but the probabilities are roughly the same).

So, in the case $\nu<\frac{\kappa}{\kappa+1}$ the strategies~{\bf 2} and~{\bf 3}
are better than the strategy~{\bf 1}. The only situation when we cannot use
neither~{\bf 2} nor~{\bf 3} is when the RWRE has reflection in the origin, and
we are considering the hitting times.

\subsection{Time spent in a valley}\label{s_time_crossing}
We have
\begin{proposition}
\label{crossingtime}
There exists ${\gamma}_4>0$ such that for $\PP$-almost all $\omega$, for all~$n$ large enough we have for $i\leq 2n+1$ and $u\geq 1$,
\begin{align*}
 \Po^{K_i}\bigl[T_{K_{i+1}}>u \bigl({\gamma}_4(\ln n)^{10}e^{H_{i-1}\vee H_i}\bigr)\mid T_{K_{i+1}}<T_{K_{i-1}}\bigr] &\leq e^{-u},\\
\Po^{K_i}\bigl[T_{K_{i-1}}>u \bigl({\gamma}_4(\ln n)^{10}e^{H_{i-1}\vee H_i}\bigr)\mid T_{K_{i-1}}<T_{K_{i+1}}\bigr] &\leq e^{-u}.
\end{align*}
\end{proposition}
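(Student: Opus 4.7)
The plan is to reduce the conditional tail to an unconditional exit-time tail on the enlarged interval $I := [K_{i-1}, K_{i+1}]$ and then to apply Proposition~\ref{spectral}. On the event $\{T_{K_{i+1}} > t\} \cap \{T_{K_{i+1}} < T_{K_{i-1}}\}$ neither endpoint of $I$ has been visited by time $t$, so $T_{\{K_{i-1}, K_{i+1}\}} > t$ on this event. Dividing by the crossing probability and invoking~\eqref{crossingprob} (which guarantees the denominator is at least $1/2$ for large $n$) gives
\[
\Po^{K_i}\bigl[T_{K_{i+1}} > t \bigm| T_{K_{i+1}} < T_{K_{i-1}}\bigr] \leq 2\, \Po^{K_i}\bigl[T_{\{K_{i-1}, K_{i+1}\}} > t\bigr].
\]
The same reduction applies for the backward crossing, except that~\eqref{crossingprob2} only furnishes the lower bound $n^{-(1+2\gamma_0)}$ for the denominator; this polynomial loss will be absorbed in the final step.

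Next I would invoke Proposition~\ref{spectral} on $I$. On the good events $A(n) \cap G_1(n) \cap F(n)$, Lemma~\ref{widthvalley} gives $c - a = K_{i+1} - K_{i-1} \leq 2(\ln n)^2$, and~\eqref{bound_pot} (applied to valleys $i-1$ and $i$) gives $\tilde M \leq 2\gamma_0 \ln n$; hence the polynomial prefactor $(c-a)^3((c-a)+\tilde M)$ is $O((\ln n)^7)$, comfortably inside the target $(\ln n)^{10}$.

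The crucial step is to compare the effective height $H = H_+ \wedge H_-$ of $I$ with $H_{i-1} \vee H_i$. Since $V(K_{i-1}) = \max_{k \geq K_{i-1}} V(k)$ is the overall maximum of $V$ on $I$, the backward climb $H_-$ is as large as the full range of $V$, so $H = H_+$. A case analysis on the extremiser $x$ in
\[
H_+ = \max_{x}\Bigl(\max_{y \in [x, K_{i+1}]} V(y) - \min_{y \in [K_{i-1}, x)} V(y)\Bigr)
\]
splits into: (a) the extremiser lies in valley $i-1$, yielding $H_+ \leq H_{i-1}$; (b) it lies in valley $i$, yielding $H_+ \leq H_i$; (c) the climb straddles the intermediate peak $K_i$, in which case the right-maximum is at most $V(K_i)$ by the defining property of $K_i$. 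Using the jump control $\ln \rho_{K_i} = O(\ln n)$ available on $G_1(n)$ together with~\eqref{bound_pot}, one aims to show that the straddling case contributes at most $(H_{i-1} \vee H_i) + O(\ln\ln n)$.

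Granting the comparison $H \leq (H_{i-1} \vee H_i) + O(\ln\ln n)$, one chooses $\gamma_4$ large enough so that $\gamma_4 (\ln n)^{10}$ dominates the residual $(\ln n)^{O(1)}$ prefactor; for the backward crossing the polynomial loss $n^{1+2\gamma_0}$ is easily absorbed into $\exp(-u(\ln n)^3)$ for $u\geq 1$. I expect the main obstacle to be case~(c) of the height comparison: a naive bound on $V(K_i) - V(b_{i-1}) - H_{i-1}$ can be as large as $\ln \rho_{K_i}$, which on $G_1(n)$ is only controlled by $\frac{1}{\kappa}\ln n$; extracting the sharper polylogarithmic correction will require a more delicate use of the descending-valley structure, and this is the heart of the argument.
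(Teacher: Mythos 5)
Your overall route is the same as the paper's: reduce the conditional tail to the unconditional exit-time tail for the interval $[K_{i-1},K_{i+1}]$ (using~\eqref{crossingprob} for the forward crossing and~\eqref{crossingprob2} for the backward one), and then invoke Proposition~\ref{spectral} with the width and range controlled on $A(n)\cap G_1(n)\cap F(n)$. Two small remarks on the reduction: the paper reflects the walk at $K_{i-1}$ and $K_{i+1}$ before applying Proposition~\ref{spectral}, which you should say explicitly since that proposition is stated for the walk restricted to $I$; and the polynomial loss $n^{1+2\gamma_0}$ from~\eqref{crossingprob2} is not absorbed by "choosing $\gamma_4$ large" alone, but by the fact that Proposition~\ref{spectral} applied at scale $\gamma_4(\ln n)^{10}e^{H}$ with $(c-a)^3((c-a)+\tilde M) = O((\ln n)^7)$ already yields decay of the form $\exp(-u\,\gamma_4(\ln n)^2/(32\gamma_1))$, whose extra $(\ln n)^2$ in the exponent swallows the $\exp((1+2\gamma_0)\ln n)$ loss for $u\geq 1$. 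These are cosmetic and you clearly have the right picture.

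The place you flag as the heart of the argument — your case (c) — is, however, exactly the step at which the paper offers no argument at all: it simply writes
$\max_{x\in(K_{i-1},K_{i+1})}\bigl(\max_{y\in[x,K_{i+1})}V(y)-\min_{y\in[K_{i-1},x)}V(y)\bigr)=H_{i-1}\vee H_i$
and then invokes Proposition~\ref{spectral} with $H=H_{i-1}\vee H_i$. Your instinct that this needs justification is correct, and I do not think the $O(\ln\ln n)$ correction you aim for is achievable as a deterministic estimate on the good events. Take $x=K_i$ in the straddling case: the contribution is $V(K_i)-V(b_{i-1})$, and one always has $V(K_i)-V(b_{i-1})\leq \bigl(V(K_i-1)-V(b_{i-1})\bigr)+\ln\rho_{K_i}\leq H_{i-1}+\ln\rho_{K_i}$, so the excess over $H_{i-1}$ is governed by the single increment $\ln\rho_{K_i}$. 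On $G_1(n)$ this increment is only bounded by $\frac1\kappa(\ln n+2\ln\ln n)$, and nothing in $A(n)\cap F(n)\cap G_1(n)$ forces it to be comparable to $H_{i-1}\vee H_i+O(\ln\ln n)$. Concretely, in a configuration where $V$ descends essentially monotonically across $[K_{i-1},K_i)$, the bottom $b_{i-1}$ sits at $K_i-1$, a single large upward jump $\ln\rho_{K_i}$ makes $K_i$ the next record maximum, and the $i$-th valley is again nearly monotone, one has $H_{i-1}\vee H_i=O(1)$ while the straddling rise equals $\ln\rho_{K_i}$, and the effective $H$ of $[K_{i-1},K_{i+1}]$ (hence the true time scale $e^{H}$) exceeds $e^{H_{i-1}\vee H_i}$ by a factor as large as $n^{c}$, not $(\ln n)^{O(1)}$. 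So you are not missing a clever trick that the paper uses: the paper asserts an identity that, as written, is not a consequence of the descending-valley structure, and filling this gap requires either a different (probabilistic) argument showing such configurations are a.s.\ absent from $[-n,n]$ for $n$ large, or an amendment to the definition of $H_i$ (e.g.\ taking the maximum over $K_i\leq j<k\leq K_{i+1}$, with the closed right endpoint, so that $H_{i-1}$ does see the jump $\ln\rho_{K_i}$), with the downstream uses of $H_i$ re-examined accordingly.
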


\begin{proof}
We prove only the second part of the proposition, the first one uses the same arguments. First, we have
\[
\max_{x\in (K_{i-1},K_{i+1})} \Bigl(\max_{y \in [x,K_{i+1})} V(y) -\min_{y\in [K_{i-1},x)} V(y)\Bigr)=H_{i-1}\vee H_i.
\]

Using~(\ref{crossingprob2}) (or~(\ref{crossingprob}) for the first part of the proposition), we obtain $\PP$-a.s.\ for~$n$ large enough,
\begin{align*}
& \lefteqn{\Po^{K_i}\bigl[T_{K_{i-1}}>u \bigl({\gamma}_4(\ln n)^{10}e^{H_{i-1}\vee H_i}\bigr)\mid
 T_{K_{i+1}}>T_{K_{i-1}}\bigr]} \\
&\leq n^{1+2\gamma_0}\Po^{K_i}\bigl[T_{\{K_{i-1},K_{i+1}\}}>u \bigl({\gamma}_4(\ln n)^{10}e^{H_{i-1}\vee H_i}\bigr),T_{K_{i+1}}>T_{K_{i-1}}\bigr].
 \end{align*}
To estimate this last probability, we may consider the random walk reflected at $K_{i-1}$ and $K_{i+1}$.  On $A(n)$ we have $K_{i+1}-K_{i-1} \leq 2 (\ln n)^2$ and on $G_1(n)\cap F(n)$ we have $\max_{y \in [K_{i-1},K_{i+1}]} V(y) -\min_{y\in [K_{i-1},K_{i+1})} V(y)\leq 2\gamma_0 \ln n$ by~(\ref{bound_pot}). Hence for $n$ such that $\gamma_0\leq (\ln n)^2$ we can apply Proposition~\ref{spectral} with $a=K_{i-1}$, $c=K_{i+1}$, $\tilde{M}\leq 2 (\ln n)^2$ and $H=H_{i-1}\vee H_i$ to get
\begin{align*}
\lefteqn{ P_{\hat\omega}^{K_i}\left[T_{\{K_{i-1},K_{i+1}\}}>u \left({\gamma}_4(\ln n)^{10}e^{H_{i-1}\vee H_i}\right)\right]}\hphantom{***********}\\
&\leq \exp\bigl(-u{\gamma}_4 (\ln n)^2/(32{\gamma}_1)\bigr),
\end{align*}
where $\hat\omega$ denotes the environment with reflection at $K_{i-1}$ and $K_{i+1}$, so that
\begin{align*}
 \lefteqn{\Po^{K_i}\bigl[T_{K_{i-1}}>u \bigl({\gamma}_4(\ln n)^{10}e^{H_{i-1}\vee H_i}\bigr)\mid
 T_{K_{i+1}}>T_{K_{i-1}}\bigr]} \hphantom{*****}\\
 &\leq \exp\bigl(-u{\gamma}_4 (\ln n)^2/(32{\gamma}_1)+(1+2\gamma_0) \ln n\bigr)\\
 &\leq e^{-u},
 \end{align*}
for ${\gamma}_4>32{\gamma}_1((1+2\gamma_0)+1)$ and~$n$ large enough.
\end{proof}

Let  $Z_i$ be a random variable with the same law as $T_{K_{i+1}}$ under $\Po^{K_i}[~\cdot\mid T_{K_{i+1}}<T_{K_{i-1}}]$. Then, 
for $i \in N(-n^{a},n^{b})$ and $H=\max_{i\in N(-n^{a},n^{b})} H_i$, we have that
 $\PP$-a.s.\ for~$n$ large enough 
\begin{equation}
\label{domstoch_1}
\frac{Z_i}{{\gamma}_4e^H(\ln n)^{10}}\prec 1 + \mathbf{e},
\end{equation}
where $\mathbf{e}$ is an exponential random variable with parameter~$1$.
Since $\omega\in G_1(n^{a\vee b})$ $\PP$-a.s.\ for~$n$ large enough, there is a constant $\gamma > 0$ (depending only on $\kappa$) such that
\begin{equation}
\label{domstoch}
\frac{Z_i}{{\gamma}_4n^{(a \vee b)/\kappa}(\ln n)^{{\gamma}}}\prec 1 + \mathbf{e}.
\end{equation}
The same inequality is true when $K_{i-1}$ and $K_{i+1}$ are exchanged. We point out that the same stochastic domination holds in the reflected case, even for $T_{\tilde K_{i_0+2}}$ under $\Pto^{\tilde K_{i_0+1}}[~\cdot\mid T_{\tilde K_{i_0+2}}<T_{\tilde K_{i_0}}]=\Pto^{\tilde K_{i_0+1}}[~\cdot~]$ in which case it is a direct consequence of Proposition~\ref{spectral}.

Using the same kind of arguments as in the proof of Proposition~\ref{crossingtime} we obtain
\begin{proposition}
\label{initcrossingtime}
There exists a positive constant ${\gamma}_4$ (without restriction of generality, the same as in Proposition~\ref{crossingtime}) such that 
for $\PP$-almost all $\omega$, we have for all~$n$ large enough, with $i_0=\card N_n(-n,0)$ and $u\geq 1$,
\begin{align*}
&\Po\bigl[T_{K_{i_0+1}(n)}>u \bigl({\gamma}_4(\ln n)^{10}e^{H_{i_0-1}\vee H_{i_0}}\bigr)\mid T_{K_{i_0+1}(n)}<T_{K_{i_0-1}(n)}\bigr] \leq e^{-u},\\
&\Pto\bigl[T_{K_{i_0+1}(n)}>u \bigl({\gamma}_4(\ln n)^{10}e^{H_{i_0-1}\vee H_{i_0}}\bigr)\bigr] \leq e^{-u}.
\end{align*}
\end{proposition}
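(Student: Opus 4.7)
The plan is to follow the proof of Proposition~\ref{crossingtime} with two adjustments: the walk starts at the interior point~$0$ of the valley $[K_{i_0}, K_{i_0+1})$ rather than at a valley boundary, and in the reflected case no conditioning is needed because the walk cannot exit to the left.

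For the first (non-reflected) inequality, I would first establish a lower bound on the conditioning probability. By~(\ref{exit_probs}),
\[
\Po^0[T_{K_{i_0+1}}<T_{K_{i_0-1}}] = \frac{\sum_{y=K_{i_0-1}}^{-1} e^{V(y)}}{\sum_{y=K_{i_0-1}}^{K_{i_0+1}-1} e^{V(y)}}.
\]
The numerator is at least $e^{V(K_{i_0-1})}$; since by definition $V(K_{i_0-1}) = \max_{k\geq K_{i_0-1}} V(k)$, the denominator is at most $(K_{i_0+1}-K_{i_0-1}) e^{V(K_{i_0-1})} \leq 2(\ln n)^2 e^{V(K_{i_0-1})}$ on $A(n)$. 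Hence on $A(n)\cap G_1(n)$ and for~$n$ large,
\[
\Po^0[T_{K_{i_0+1}}<T_{K_{i_0-1}}] \geq (2(\ln n)^2)^{-1},
\]
which plays the role of~(\ref{crossingprob2}). Writing $\tau = {\gamma}_4 (\ln n)^{10} e^{H_{i_0-1}\vee H_{i_0}}$, I get
\[
\Po[T_{K_{i_0+1}}>u\tau \mid T_{K_{i_0+1}}<T_{K_{i_0-1}}] \leq 2(\ln n)^2\, \Po[T_{\{K_{i_0-1},K_{i_0+1}\}}>u\tau].
\]
I would then apply Proposition~\ref{spectral} to the walk reflected at $K_{i_0-1}$ and $K_{i_0+1}$ on the interval $[K_{i_0-1},K_{i_0+1}]$, which has width at most $2(\ln n)^2$ on $A(n)$ and maximal potential oscillation at most $2\gamma_0 \ln n$ on $G_1(n)\cap F(n)$ by~(\ref{bound_pot}); the relevant $H$ from Proposition~\ref{spectral} equals $H_{i_0-1}\vee H_{i_0}$. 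The resulting bound, of the form $\exp(-u{\gamma}_4(\ln n)^2/(32{\gamma}_1))$, easily absorbs the polylogarithmic prefactor, yielding the desired estimate once~${\gamma}_4$ is chosen large enough.

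For the second (reflected) inequality, no conditioning is necessary: under $\Pto$ the walk starts at $\tilde K_{i_0}=0$ and cannot leave to the left. I would apply Proposition~\ref{spectral} directly to the interval $[0, K_{i_0+1}]$. On $A(n)\cap G_1(n)$ this interval has width at most $(\ln n)^2$, potential oscillation bounded by $(\ln n+2\ln\ln n)/\kappa$, and the quantity~$H$ of Proposition~\ref{spectral} is dominated by $H_{i_0}\leq H_{i_0-1}\vee H_{i_0}$. Proposition~\ref{spectral} then immediately yields the stated bound for ${\gamma}_4\geq{\gamma}_1$ and~$n$ large.

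The main obstacle is essentially bookkeeping: verifying that the quantity~$H$ appearing in Proposition~\ref{spectral} is indeed bounded by $H_{i_0-1}\vee H_{i_0}$ for the intervals considered, and that the width and oscillation bounds combine to produce the $(\ln n)^{10}$ factor. Once these checks are in place and ${\gamma}_4$ is taken at least as large as the constant furnished by Proposition~\ref{crossingtime} (adjusted upward to absorb the extra polylogarithmic prefactor in the non-reflected case), the argument reduces to a direct application of the reasoning of Proposition~\ref{crossingtime}.
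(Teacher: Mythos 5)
Your proposal matches the paper's approach: the paper proves Proposition~\ref{initcrossingtime} only by saying it ``uses the same kind of arguments as in the proof of Proposition~\ref{crossingtime},'' and you carry out exactly that sketch, bounding the conditioning probability from below via~(\ref{exit_probs}) (in fact getting a sharper, merely polylogarithmic bound, since $V(K_{i_0-1})=\max_{k\geq K_{i_0-1}}V(k)$) and then applying Proposition~\ref{spectral} to the walk reflected on $[K_{i_0-1},K_{i_0+1}]$ (resp.\ $[0,K_{i_0+1}]$, with no conditioning in the reflected case).
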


Similarly we obtain
\begin{proposition}
\label{rightcrossingtime}
There exists a positive constant ${\gamma}_4$ (without restriction of generality, the same as in Proposition~\ref{crossingtime}) such that
for $\PP$-almost all $\omega$, we have for all~$n$ large enough with $i_1=\card N_n(-n,n^{\nu})$ and $u\geq 1$,
\[
\Po^{K_{i_1}}\bigl[T_{n^{\nu}}>u \bigl({\gamma}_4(\ln n)^{10}e^{H_{i_1-1}\vee H_{i_1}}\bigr)\mid T_{n^{\nu}}<T_{K_{i_1}(n)}\bigr] \leq e^{-u}.
\]
and
\[
\Po^{K_{i_1}}\bigl[T_{K_{i_1-1}}>u \bigl({\gamma}_4(\ln n)^{10}e^{H_{i_1-1}\vee H_{i_1}}\bigr)\mid T_{K_{i_1 -1}}<T_{n^\nu}\bigr] \leq e^{-u}.
\]\end{proposition}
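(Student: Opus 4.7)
The plan is to mimic the proof of Proposition~\ref{crossingtime} on the enlarged interval $[K_{i_1-1}, K_{i_1+1}]$. The key geometric fact is that, by the definition $i_1 = \card N_n(-n, n^\nu)$, one has $K_{i_1} < n^\nu \leq K_{i_1+1}$, so $\lfloor n^\nu \rfloor$ lies strictly between the endpoints. (I read the conditioning event in the first display as $\{T_{n^\nu} < T_{K_{i_1-1}}\}$; the printed $T_{K_{i_1}}$ looks like a typo, since that event is degenerate under $\Po^{K_{i_1}}$.)

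Working on $A(n)\cap F(n)\cap G_1(n)$, which has full $\PP$-probability eventually by Borel--Cantelli, the interval $[K_{i_1-1}, K_{i_1+1}]$ has width at most $2(\ln n)^2$, its potential oscillation $\tilde M$ is at most $2{\gamma}_0 \ln n$ by (\ref{bound_pot}), and the relevant $H = H_+ \wedge H_-$ equals $H_{i_1-1}\vee H_{i_1}$ by the identity invoked at the start of the proof of Proposition~\ref{crossingtime} (this identity exploits that $V(K_{i_1-1}) = \max_{k \geq K_{i_1-1}} V(k)$ forbids any new high point appearing further to the right). Since the walk is nearest-neighbour and $\lfloor n^\nu \rfloor \in (K_{i_1-1}, K_{i_1+1})$, I have the pathwise bound $T_{\{K_{i_1-1}, n^\nu\}} \leq T_{\{K_{i_1-1}, K_{i_1+1}\}}$, so Proposition~\ref{spectral} applied to $[K_{i_1-1}, K_{i_1+1}]$ yields
\[
\Po^{K_{i_1}}\bigl[T_{\{K_{i_1-1}, n^\nu\}} > u\, {\gamma}_4 (\ln n)^{10} e^{H_{i_1-1}\vee H_{i_1}}\bigr] \leq \exp\bigl(-u\, {\gamma}_4 (\ln n)^2 / C_0\bigr)
\]
for a constant $C_0$ depending only on ${\gamma}_0$ and ${\gamma}_1$, along with the analogous bound in the opposite direction.

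To pass to the two conditional statements, I lower bound the conditioning probabilities polynomially using (\ref{exit_probs}). The denominator of (\ref{exit_probs}) over $[K_{i_1-1}, \lfloor n^\nu \rfloor -1]$ is at most $2(\ln n)^2\, e^{V(K_{i_1-1})}$ (again since $V(K_{i_1-1})$ is the maximum of $V$ on $[K_{i_1-1}, \infty)$). Keeping the term $y = K_{i_1-1}$ in the numerator gives $\Po^{K_{i_1}}[T_{n^\nu} < T_{K_{i_1-1}}] \geq 1/(2(\ln n)^2)$; keeping the term $y = K_{i_1}$ gives $\Po^{K_{i_1}}[T_{K_{i_1-1}} < T_{n^\nu}] \geq e^{V(K_{i_1}) - V(K_{i_1-1})}/(2(\ln n)^2) \geq n^{-{\gamma}_0}/(2(\ln n)^2)$, the last step using (\ref{bound_pot}). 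Dividing the unconditional tail above by either of these polynomial denominators costs a factor $n^{O(1)}$, which is swamped by the stretched-exponential $\exp(-u{\gamma}_4(\ln n)^2/C_0)$ once $n$ is large and ${\gamma}_4$ is taken sufficiently large, yielding the required $e^{-u}$ bound. The only subtle step is verifying the identity $H = H_{i_1-1}\vee H_{i_1}$ on the enlarged interval; everything else is routine bookkeeping that parallels the proof of Proposition~\ref{crossingtime} verbatim.
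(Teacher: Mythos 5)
Your proof is correct and follows essentially the route the paper intends (the paper itself only writes ``Similarly we obtain'', pointing back to Proposition~\ref{crossingtime}): on the good event work on $[K_{i_1-1},K_{i_1+1}]$, use $K_{i_1}<\lfloor n^\nu\rfloor\leq K_{i_1+1}$ to get $T_{\{K_{i_1-1},n^\nu\}}\leq T_{\{K_{i_1-1},K_{i_1+1}\}}$, apply Proposition~\ref{spectral}, and absorb the polynomial cost of removing the conditioning into the stretched exponential. Your diagnosis of the misprint in the conditioning event is also correct, since $\{T_{n^\nu}<T_{K_{i_1}}\}$ is empty under $\Po^{K_{i_1}}$ and the first bound is used precisely to dominate $\overline\tau(n^\nu)$, which is defined conditionally on $\{T_{n^\nu}<T_{K_{i_1-1}}\}$.
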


This proposition implies that
\begin{equation}
\label{lastdomstoch}
\frac{\tau_-^{\text{last}}}{{\gamma}_4n^{\nu/\kappa}(\ln n)^{{\gamma}}}\prec 1 + \mathbf{e}.
\end{equation}

\subsection{Time spent for backtracking}
\label{s_time_backtracking}

Recalling the definitions (\ref{defTbacktrack}) and (\ref{Bdef}), we obtain, for the reflected case,
\begin{proposition}
\label{timebacktrack}
For $0<a<b<c<1$, we have $\PP$-a.s.\ for~$n$ large enough,
\[
\Pto\Bigl[\frac{\Tback}{{\gamma_4 n^{\nu/\kappa}(\ln n)^{{\gamma}}}} \geq n^c , \Bk(n) \in[n^a,n^b)\Bigr]\leq \exp(-n^c/4),
\]
where ${\gamma}$ is as in (\ref{domstoch}).
\end{proposition}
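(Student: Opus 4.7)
The plan is to decompose $\Tback$ into at most $2\Bk(n)$ individual valley-crossing times, bound each by the uniform stochastic domination established in Section~\ref{s_time_crossing}, and apply an exponential Chebyshev inequality to the resulting sum.

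By the definition of $\Tback$ in the reflected case, each firing of the indicator $\1{Y_j=\tilde K_{i+1},Y_{j+1}=\tilde K_i}$ contributes exactly two valley-crossings: one backward crossing (time $T_{\tilde K_i}\circ\theta(s_j)-s_j$) and one subsequent forward crossing $T^{\text{next}}(i)\circ(T_{\tilde K_i}\circ\theta(s_j))$. Hence the total number of crossings summed in $\Tback$ is at most $2\Bk(n)$, and by the strong Markov property these crossings are mutually independent conditionally on $(Y_j)$ and $\tom$. On the good environment event $A(n)\cap G_1(n)\cap F(n)$, which holds $\PP$-a.s.\ for~$n$ large enough by the results of Section~\ref{s_estimates_env}, the stochastic domination~(\ref{domstoch}) applies in both crossing directions and uniformly for $i\in N(-n^\nu,n^\nu)$ (including boundary crossings near $n^\nu$ via Proposition~\ref{rightcrossingtime}), so each of these crossing times is stochastically dominated by $\gamma_4 n^{\nu/\kappa}(\ln n)^{\gamma}(1+\mathbf{e})$, with $\mathbf{e}$ an exponential random variable of parameter~$1$.

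On the event $\{\Bk(n)<n^b\}$ we therefore obtain
\[
 \frac{\Tback}{\gamma_4 n^{\nu/\kappa}(\ln n)^{\gamma}}\,\1{\Bk(n)<n^b}
 \;\prec\; \sum_{k=1}^{2\lceil n^b\rceil}(1+\mathbf{e}_k),
\]
where $(\mathbf{e}_k)$ are i.i.d.\ exponential$(1)$ random variables and we have enlarged the random count $2\Bk(n)$ to the deterministic upper bound $2\lceil n^b\rceil$ (which only increases the sum). Chebyshev's exponential inequality with parameter $\lambda=1/2$, together with $\EA[e^{\mathbf{e}/2}]=2$, gives
\[
 \PAt\Bigl[\sum_{k=1}^{2\lceil n^b\rceil}(1+\mathbf{e}_k)\geq n^c\Bigr]
 \leq (2e^{1/2})^{2\lceil n^b\rceil}e^{-n^c/2}\leq e^{-n^c/4}
\]
for all $n$ large enough, since $b<c$.

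The setup is such that no step is really an obstacle: the spectral-gap and stochastic-domination machinery in Sections~\ref{s_confinement} and~\ref{s_time_crossing} does the heavy lifting, and the backtracking-count bound from~(\ref{boundback}) is not even needed here because we are already conditioning on $\{\Bk(n)<n^b\}$. The only points requiring a moment of care are verifying that the uniform domination~(\ref{domstoch}) covers every crossing appearing in $\Tback$ and legitimately substituting the deterministic ceiling $2\lceil n^b\rceil$ for $2\Bk(n)$ inside the indicator.
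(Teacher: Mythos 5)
Your proof is correct and follows essentially the same route as the paper's: decompose $\Tback$ into at most $2\Bk(n)\leq 2n^b$ conditionally independent crossing times, dominate each by $\gamma_4 n^{\nu/\kappa}(\ln n)^\gamma(1+\mathbf{e})$ via~(\ref{domstoch}) and~(\ref{lastdomstoch}), and close with an exponential Chebyshev bound (the paper phrases the dominating sum as $2n^b+\mathrm{Gamma}(2n^b,1)$, which is the same object as your $\sum_{k=1}^{2\lceil n^b\rceil}(1+\mathbf{e}_k)$). The only cosmetic difference is that you bound the moment generating factor $(2e^{1/2})^{2\lceil n^b\rceil}$ directly whereas the paper records the tail bound~(\ref{tail_gamma}) and then simplifies; the computations coincide.
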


\begin{proof}
On the event $\{\Bk(n)\in [n^a,n^b)\}$, we have $\sum_{i\in N(0,n^{\nu})} \xi^{\nu}(i)= \Bk(n)< n^b$, so we can use~(\ref{domstoch}) and~(\ref{lastdomstoch}) to get that $\PP$-a.s.\ for~$n$ large enough,
\begin{equation}
\label{Gammaest}
\frac{\Tback}{\gamma_4 n^{\nu/\kappa} (\ln n)^{{\gamma}}} \prec 2n^b + \text{Gamma}(2n^b,1).
\end{equation}
(note that $\Tback$ is the time spent in valleys from $0$ to $n^\nu$ because we have a reflection at 0). The factor $2$ arises from the fact that each backtracking creates one right-left crossing and one left-right crossing. We use the following bound on the tail of $\text{Gamma}(k,1)$:
\begin{equation}
\label{tail_gamma} P[\text{Gamma}(k,1) \geq u]\leq e^{-u/2} E[\exp(\text{Gamma}(k,1)/2)]=e^{-u/2}2^k.
\end{equation}

Hence we have $\PP$-a.s.\ for~$n$ large enough,
\[
\Pto\Bigl[\frac{\Tback}{{\gamma_4 n^{\nu/\kappa}(\ln n)^{{\gamma}}}} \geq n^c , \Bk(n) \in [n^a,n^b)\Bigr]\leq P[\text{Gamma}(2n^b,1)\geq n^c-2n^b],
\]
and since $(n^c-2n^b)/2-2n^b\ln 2\geq n^c/4$ for $n$ large enough, we conclude with (\ref{Gammaest}).
\end{proof}

In the same way, we get, still for the reflected case
\begin{proposition}
\label{timebacktrackleft}
For $0<a<b<c<1$, we have $\PP$-a.s.\ for~$n$ large enough,
\[
\Pto\Bigl[\frac{\Tleft}{{\gamma_4 n^{\nu/\kappa}(\ln n)^{{\gamma}}}} \geq n^c , \Bk(n) \in[n^a,n^b)\Bigr]\leq \exp(-n^c/4),
\]
where ${\gamma}$ only depends on $\kappa$.
\end{proposition}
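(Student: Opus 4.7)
The plan is to mimic the proof of Proposition~\ref{timebacktrack} essentially verbatim, the only change being the identification of which single valley (and how many crossings of it) contributes to $\Tleft$ rather than to $\Tback$.

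First, I would observe that in the reflected case $\Tleft$ is built from the excursions of the embedded walk that visit $\widetilde K_{i_0}=0$ after having reached $\widetilde K_{i_0+1}$. Each such excursion corresponds to a step $Y_{j+1}<Y_j$ at level $i_0$, i.e.\ to a contribution to $\widetilde\xi^\nu(i_0)\leq \Bk(n)$. Therefore, on $\{\Bk(n)\in[n^a,n^b)\}$ the number of excursions counted in $\Tleft$ is bounded by $n^b$, and each excursion consists of (at most) one right-to-left crossing and one left-to-right crossing of the first reflected valley $[\widetilde K_{i_0},\widetilde K_{i_0+1})$; the total number of crossings is therefore bounded by $2n^b$.

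Next, I would stochastically dominate each individual crossing time. For the ``interior'' crossings, the domination~(\ref{domstoch}) applies directly to the first reflected valley (this was already noted in the remark following~(\ref{domstoch})), and for the excursion that starts from $\widetilde K_{i_0+1}$ and returns to $\widetilde K_{i_0}=0$ one appeals to Proposition~\ref{spectral} (which in the reflected case requires no conditioning, as pointed out just before Proposition~\ref{initcrossingtime}). Using independence of the individual crossings given~$\omega$, this yields, $\PP$-a.s.\ for $n$ large enough,
\[
\frac{\Tleft}{\gamma_4\, n^{\nu/\kappa}(\ln n)^{\gamma}} \;\prec\; 2n^b + \mathrm{Gamma}(2n^b,1),
\]
on the event $\{\Bk(n)\in[n^a,n^b)\}$, in exact analogy with~(\ref{Gammaest}).

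Finally, I would apply the Gamma tail bound~(\ref{tail_gamma}) as in the proof of Proposition~\ref{timebacktrack}. Since $c>b$, for $n$ large enough $(n^c-2n^b)/2 - 2n^b\ln 2 \geq n^c/4$, and we obtain
\[
\Pto\Bigl[\tfrac{\Tleft}{\gamma_4 n^{\nu/\kappa}(\ln n)^{\gamma}}\geq n^c,\, \Bk(n)\in[n^a,n^b)\Bigr]
\leq P[\mathrm{Gamma}(2n^b,1)\geq n^c-2n^b]\leq \exp(-n^c/4).
\]
There is no genuine obstacle here beyond bookkeeping; the only subtle point, and the one I would be most careful about, is verifying that the stochastic domination argument really applies to the leftmost reflected valley (where ``conditioning on not exiting to the left'' is meaningless because of the reflection), which is exactly what the remark after~(\ref{domstoch}) is there for.
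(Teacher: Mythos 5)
Your proof is correct and matches the paper's (very brief) argument: bound the number of crossings at level $i_0$ by $2n^b$ on $\{\Bk(n)<n^b\}$, dominate each by an independent copy of $\gamma_4 n^{\nu/\kappa}(\ln n)^{\gamma}(1+\mathbf{e})$ via~(\ref{domstoch}), and apply the Gamma tail bound exactly as in the proof of Proposition~\ref{timebacktrack}. One small swap in your last paragraph: it is the left-to-right return from $\widetilde K_{i_0}=0$ to $\widetilde K_{i_0+1}$ where reflection makes the conditioning trivial (second inequality of Proposition~\ref{initcrossingtime}), whereas the right-to-left crossing from $\widetilde K_{i_0+1}$ is still a genuine conditional crossing handled by Proposition~\ref{crossingtime} with $i=i_0+1$ (and it may stray into $[K_{i_0+1},K_{i_0+2})$ before descending, which is why the paper speaks of the valleys with indices $i_0$ \emph{and} $i_0+1$; this is harmless since~(\ref{domstoch}) is stated in terms of $H_{i-1}\vee H_i$).
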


\begin{proof}
On the event $\{\Bk(n) \in[n^a,n^b)\}$, $\Tleft$ is lower than the time spent in the valleys of indexes $i_0$ and $i_0+1$ during backtrackings from $\widetilde{K}_{i_0+1}$ to $\widetilde{K}_{i_0}$. Since, there are at most $n^b$ backtracks for this valley and since~(\ref{domstoch}) is valid even for $T_{K_{i_0+2}}$ under $\Pto^{K_{i_0+1}}[~\cdot~]$, we can use the same argument as in the proof of Proposition~\ref{timebacktrack}.
\end{proof}

Next, recalling the definition (\ref{defTbacktrack}), we obtain
\begin{proposition}
\label{timebacktrack1}
For $0<a<b<1$ and $c\in(b\vee \nu,1)$, we have $\PP$-a.s.\ for~$n$ large enough,
\[
\Po\Bigl[\frac{\Tback}{{n^{(b\vee \nu)/\kappa}(\ln n)^{{\gamma}}}} \geq n^c , \Bk(n)\in [n^a,n^b) \Bigr]\leq \exp(-n^c/4),
\]
where ${\gamma}$ only depends on $\kappa$.
\end{proposition}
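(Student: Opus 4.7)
The plan is to follow the proof of Proposition~\ref{timebacktrack} with one new ingredient. In the non-reflected decomposition, $\Tback$ sums over valleys indexed from $1$ to $i_1-1$, so a priori the relevant valleys could have depths as large as $\tfrac{1}{\kappa}\ln n$ (those in $[-n,0]$ are only controlled by $G_1(n)$), which would give the weaker exponent $1/\kappa$. To recover the announced exponent $(b\vee\nu)/\kappa$ we must first restrict the spatial extent of the valleys actually visited by the walk on the event $\{\Bk(n)<n^b\}$.

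Since each step of the embedded walk $(Y_i)$ moves between neighbouring valley boundaries, on $\{\Bk(n)<n^b\}$ the smallest valley index visited before $T_{n^{\nu}}$ is at least $i_0+1-n^b$. Combined with the event $A(n)$, which controls every valley width by $(\ln n)^2$, the walk therefore never visits a position below $K_{i_0+1}-n^b(\ln n)^2\geq -n^b(\ln n)^2$. Together with $K_{i_1}<n^{\nu}$, this confines all valleys contributing to $\Tback$ to the range $[-n^b(\ln n)^2,\,n^{\nu}]$.

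Next I would apply Lemma~\ref{depthvalley} with the argument $M_n:=n^{\nu}\vee (n^b(\ln n)^2)$: a direct rerun of its proof, or an application to the sequence $M_n\to\infty$, shows that $\PP$-a.s.\ for $n$ large enough the maximum depth of a valley in $[-M_n,M_n]$ is bounded by $\tfrac{1}{\kappa}(\ln M_n+2\ln\ln M_n)=\tfrac{b\vee\nu}{\kappa}\ln n+O(\ln\ln n)$. Proposition~\ref{crossingtime}, used exactly as in the derivation of~(\ref{domstoch}), then shows that each individual crossing time $\tau_+^{(j)}(i)$ or $\tau_-^{(j)}(i)$ entering $\Tback$ is stochastically dominated by $\gamma_4 n^{(b\vee\nu)/\kappa}(\ln n)^{\gamma}(1+\mathbf{e})$ with $\mathbf{e}\sim\mathrm{Exp}(1)$; the last term $\tau_-^{\mathrm{last},(j)}$ is handled analogously by Proposition~\ref{rightcrossingtime}.

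Finally, each backtrack contributes one right-to-left and one subsequent left-to-right crossing, so $\Tback$ is a sum of at most $2\Bk(n)<2n^b$ independent dominated terms, which yields
\[
\frac{\Tback}{\gamma_4 n^{(b\vee\nu)/\kappa}(\ln n)^{\gamma}}\prec 2n^b+\mathrm{Gamma}(2n^b,1)
\]
exactly as in~(\ref{Gammaest}). Invoking the Gamma tail bound~(\ref{tail_gamma}) together with $c>b\vee\nu\geq b$ (so that $(n^c-2n^b)/2-2n^b\ln 2\geq n^c/4$ for $n$ large) produces the bound $\exp(-n^c/4)$ exactly as at the end of the proof of Proposition~\ref{timebacktrack}. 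I expect the confinement step in the second paragraph to be the only substantive new difficulty; once it is in place, all subsequent arguments transpose verbatim from the reflected case.
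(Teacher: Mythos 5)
Your proof is correct and follows essentially the same route as the paper's: the paper's own proof of this proposition simply notes that on $\{\Bk(n)\in[n^a,n^b)\}$ the contributing valleys lie in $N_n(-n^b,n^\nu)$ and then repeats the Gamma-tail argument of Proposition~\ref{timebacktrack}, which is exactly what you do. Your elaboration of the confinement step (using $A(n)$ to bound valley widths, giving a range $[-n^b(\ln n)^2,n^\nu]$) is slightly more careful than the paper's terse statement, but the polylogarithmic discrepancy is immaterial and the resulting depth bound and stochastic domination are the same.
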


\begin{proof}
On the event $\{\Bk(n) \in[n^a,n^b)\}$, $\Tback$ consists of the time spent in the valleys indexed by $N_n(-n^b,n^{\nu})$, once this is noted we use the same argument as in the proof of Proposition~\ref{timebacktrack}.
\end{proof}

\subsection{Time spent for the direct crossing}
\label{s_direct_crossing}

We can control $\Tdir$ with the following proposition. Recall (\ref{def_b_prime}) and (\ref{Gdef}).
\begin{proposition}
\label{timedirect}
For all $m\geq m_0(\kappa,\nu)$, we have for $n$ large enough
\[
\Po\left[\Tdir\geq n \right]\leq C(m)\exp(-n^{1-(1+2/m)\frac{\nu}{\kappa}}).
\]
\end{proposition}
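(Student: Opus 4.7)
\medskip

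\noindent\textbf{Plan.} The strategy is to stratify the valleys in $N_n(0,n^{\nu})$ by depth, apply the stochastic domination of crossing times from~(\ref{domstoch_1}), and then use a union bound together with the Gamma tail estimate~(\ref{tail_gamma}). Throughout, thanks to Borel--Cantelli applied to Lemmas~\ref{widthvalley}, \ref{depthvalley1}, \ref{depthvalley} and~\ref{easy_backtrack}, I would restrict attention to environments in $A(n)\cap B'(n,\nu,m)\cap G_1(n)\cap F(n)$, which is the case $\PP$-a.s.\ for $n$ large.

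\noindent\textbf{Step 1 (depth stratification).} For $k=0,1,\dots,m-1$, partition $N_n(0,n^{\nu})$ into classes
\[
\mathcal{V}_k=\Bigl\{i\in N_n(0,n^{\nu}):\ H_i\in \Bigl(\tfrac{k\nu}{m\kappa}\ln n+\ln\ln n,\ \tfrac{(k+1)\nu}{m\kappa}\ln n+\ln\ln n\Bigr]\Bigr\}
\]
(with $\mathcal{V}_0$ absorbing the shallower valleys). By $B'(n,\nu,m)$ we have $|\mathcal{V}_k|\le N_k:=n^{\nu(1-k/m)}$ for $k\geq 1$, and trivially $|\mathcal{V}_0|\leq N_0:=n^{\nu}$.

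\noindent\textbf{Step 2 (per-class tail).} By~(\ref{domstoch_1}) and the depth bound on $\mathcal{V}_k$, for each $i\in\mathcal{V}_k$,
\[
\frac{T^{\text{next}}(i)}{L_k}\prec 1+\mathbf{e}_i,\qquad L_k:=\gamma_4(\ln n)^{11}n^{(k+1)\nu/(m\kappa)},
\]
with independent exponentials (independence across valleys follows from the strong Markov property, as the direct crossings occur in disjoint time intervals). Hence the contribution $\sum_{i\in\mathcal{V}_k}T^{\text{next}}(i)$ is stochastically dominated by $L_k(N_k+\mathrm{Gamma}(N_k,1))$.

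\noindent\textbf{Step 3 (union bound and exponent).} Write
\[
\Po[\Tdir\geq n]\leq\sum_{k=0}^{m-1}\Po\Bigl[\textstyle\sum_{i\in\mathcal{V}_k}T^{\text{next}}(i)\geq \frac{n}{m}\Bigr].
\]
Setting $\alpha_k:=\nu(1-k/m)+(k+1)\nu/(m\kappa)$, one checks $\max_k\alpha_k<1$ provided $m$ exceeds a threshold $m_0(\kappa,\nu)$ (the relevant extremum is $k=m-1$ when $\kappa\le 1$ and $k=0$ when $\kappa>1$). For such $m$, $N_kL_k\leq n/(2m)$ for $n$ large, so the event reduces to $\mathrm{Gamma}(N_k,1)\geq n/(2mL_k)$, and~(\ref{tail_gamma}) yields
\[
\Po\Bigl[\textstyle\sum_{i\in\mathcal{V}_k}T^{\text{next}}(i)\geq \frac{n}{m}\Bigr]\leq 2^{N_k}\exp\Bigl(-\frac{n}{4mL_k}\Bigr).
\]
The slowest decay comes from $k=m-1$, giving exponent $\sim n^{1-\nu/\kappa}/(\ln n)^{11}$. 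Since for $n$ large $(\ln n)^{11}\leq n^{2\nu/(m\kappa)}$, this dominates $n^{1-(1+2/m)\nu/\kappa}$; the correction $2^{N_k}\le 2^{n^{\nu/m}}$ is negligible once $\nu/m<1-\nu/\kappa$, i.e.\ $m>\nu\kappa/(\kappa-\nu)$. Summing over $k$ produces the bound $C(m)\exp(-n^{1-(1+2/m)\nu/\kappa})$.

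\noindent\textbf{Main obstacle.} The delicate part is Step~3: choosing $m_0(\kappa,\nu)$ so that simultaneously (i) $\alpha_k<1$ for every $k$ (otherwise the deterministic shift $N_kL_k$ already overflows $n$), and (ii) the $(\ln n)^{11}$ factor from $L_k$ together with the $2^{N_k}=2^{n^{\nu/m}}$ correction is absorbed into the extra $2\nu/(m\kappa)$ in the target exponent. Both conditions amount to taking $m>\nu\kappa/(\kappa-\nu)$ (plus a second condition $m>\nu/(\kappa(1-\nu))$ if $\kappa>1$), and the $2/m$ slack in the statement is designed precisely to carry these logarithmic and exponential-of-subdominant corrections.
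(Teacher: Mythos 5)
Your approach — stratify the valleys by depth, apply the crossing-time domination per class, and union-bound — is the same as the paper's (which stratifies the occupation time $\sigma(k)$ by depth class and then maximizes over $k$). The arithmetic in Step~3 (the threshold $m_0(\kappa,\nu)$, absorbing $(\ln n)^{11}$ and $2^{N_k}$ into the $2/m$ slack) is correct and matches the paper's exponent bookkeeping.

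There is, however, a concrete gap in Step~2. You define $\mathcal{V}_k$ by the value of $H_i$ alone, and then claim $T^{\text{next}}(i)/L_k \prec 1+\mathbf{e}_i$ for $i\in\mathcal{V}_k$ with $L_k=\gamma_4(\ln n)^{11}n^{(k+1)\nu/(m\kappa)}$, citing~(\ref{domstoch_1}). But the underlying estimate (Proposition~\ref{crossingtime}) controls $T^{\text{next}}(i)$ by $\gamma_4(\ln n)^{10}e^{H_{i-1}\vee H_i}$, not $e^{H_i}$ — the walk conditioned to cross from $K_i$ to $K_{i+1}$ can still spend a long time inside the preceding valley $[K_{i-1},K_i)$, so the relevant depth is the pairwise maximum. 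Equation~(\ref{domstoch_1}) is a \emph{uniform} bound over all valleys in a range, precisely because it takes $H$ to be the global maximum; restricting to $\mathcal{V}_k$ and replacing $H$ by the class-$k$ threshold is not licensed, since some $i\in\mathcal{V}_k$ may have $H_{i-1}$ in a strictly higher class. As written, the per-class domination $\sum_{i\in\mathcal{V}_k}T^{\text{next}}(i)\prec L_k(N_k+\mathrm{Gamma}(N_k,1))$ is therefore not justified.

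The repair is cheap and brings you back in line with the paper: stratify by $H_{i-1}\vee H_i$ rather than $H_i$. Since each $H_j$ contributes to at most two of the pairwise maxima, the event $B'(n,\nu,m)$ still gives $\card\{i: H_{i-1}\vee H_i \geq \frac{k\nu}{m\kappa}\ln n+\ln\ln n\}\leq 2n^{\nu(1-k/m)}$, i.e.\ only a factor $2$ worse than your $N_k$ (this is exactly the factor $2$ appearing in the paper's bound $2n^{\nu(1-k/m)}+\mathrm{Gamma}(2n^{\nu(1-k/m)},1)$). With this change, Proposition~\ref{crossingtime} applies directly per class, and the rest of your Step~3 goes through unchanged.
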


\begin{proof}
Recall the definition (\ref{defTdirect}) and let us take $\omega \in B'(n,\nu,m)\cap G_1(n)$. Let us introduce for $k=-1,\ldots,m$,
\begin{align}
N(k)= &\card \{i \in N(-n^{\nu},n^{\nu}): \ H_i \geq \frac{\nu k}{\kappa m} \ln n + 2\ln \ln n \},\\
\sigma(k)=&\card\Bigl\{ i \leq T_{n^{\nu}}: X_i \in \left[K_j(n),K_{j+1}(n)\right)\text{ for some } j \nonumber\\
 & \qquad\text{ with } H_j \in \Bigl[\ln n \frac{\nu k}{\kappa m}+2\ln \ln n, 
\ln n \frac{\nu (k+1)}{\kappa m}+2\ln \ln n\Bigr]\Bigr\}.\label{deftauk}
\end{align}

If $\Tdir \geq n$, then for some $k \in [-1,m]$ the particle spent an amount of time greater than $n/(4m)$ in the valleys of depth 
in $\bigl[\frac{\nu k}{\kappa m} \ln n + 2\ln \ln n, \frac{\nu (k+1)}{\kappa m} \ln n + 2\ln \ln n\bigr]$ because $\omega$ is in $G_1(n)$, so that
\begin{equation}
\label{discretize}
\Po[\Tdir> n] \leq 4m \max_{k \in [-1,m]} \Po[\sigma(k) \geq  n/(4m)].
\end{equation}
 
Using Proposition~\ref{crossingtime}, since $\omega \in B'(n,\nu,m)\cap G_1(n)$, we have $N(k) \leq n^{\nu(1- k/m)}$, and so
\[
\frac{\sigma(k)}{{\gamma}_4(\ln n)^{11}n^{\nu(k+1)/(\kappa m)}} \prec 2n^{\nu(1- k/m)} + \text{Gamma}(2n^{\nu(1- k/m)},1).
\]
For $m>(1-\nu)^{-1}$ we have that $n^{\nu(1-k/m)}=o(n^{1-\nu(k+1)/m}(\ln n)^{-11})$, and for~$n$ large enough (depending on~$\nu$ and~$m$), we use~(\ref{tail_gamma}) to obtain
\begin{align*}
\Po[\sigma(k) \geq  n/(4m)]& \leq P\Bigl[\text{Gamma}(2n^{\nu(1- k/m)},1) \geq  \frac{n^{1-\nu(k+1)/(\kappa m)}}{(\ln n)^{12}} \Bigr]\\
& \leq 4^{n^{\nu(1-k/m)}}\exp\Bigl(-\frac{n^{1-\nu(k+1)/(\kappa m)}}{(\ln n)^{12}}\Bigr)\\
& \leq \exp\Bigl(-2n^{1-\nu(k+2)/(\kappa m)}+\ln 4 n^{\nu(1-k/m)}\Bigr).
\end{align*}

We need to check that $n^{1-(1+2/m)\nu/\kappa}\geq \ln 4 n^{\nu(1-k\epsilon)}$ for any~$k$, 
if we take~$m$ large enough, but this can be done by considering the cases $k=0$ and $k=m$. Hence we get Proposition~\ref{timedirect}.
\end{proof}

\subsection{Upper bound for the probability of quenched slowdown for the hitting time}
\label{s_upper_q_slowdown}

In this section we suppose that $\omega \in A(n)\cap G_1(n) \cap B'(n,\nu,m)$, which is satisfied $\PP$-a.s.\ for~$n$ large enough. First, we consider RWRE with reflection at the origin.
Because of (\ref{decompos})
\begin{align}
\label{decompoUB}
\Pto\left[T_{n^{\nu}}>n\right] \leq &\Pto\left[\Tdir\geq n/5 \right]+\Pto\left[\Tback\geq n/5\right]+\Pto\left[\Tinit\geq n/5\right] \\ \nonumber
                                    &+\Pto\left[\Tright\geq n/5\right]+\Pto\left[\Tleft\geq n/5\right].
\end{align}

Let $\epsilon>0$ and recall (\ref{Bdef}), then
\begin{align*}
\Pto\left[\Tback\geq n/5 \right] \leq & \Pto[\Bk(n)> n^{1-(1+2/m)\nu/\kappa}]\\
&+\Pto[\Tback\geq n/5,\Bk(n)\leq n^{1-(1+2/m)\nu/\kappa}].
\end{align*}

Using (\ref{boundback}), we can write
\[
\Pto[\Bk(n)> n^{1-(1+2/m)\nu/\kappa}]\leq C_2 \exp(-n^{1-(1+2/m)\nu/\kappa}),
\]
 and for~$n$ large enough by Proposition~\ref{timebacktrack},
\begin{align*}
\lefteqn{\Pto[\Tback\geq n/5,\Bk(n)\leq n^{1-(1+2/m)\nu/\kappa}]}\\
&\leq \Pto\Bigl[\frac{\Tback}{n^{\nu/\kappa}(\ln n)^{{\gamma}}}\geq n^{1-(1+1/m)\nu/\kappa} ,\Bk(n)\leq n^{1-(1+2/m)\nu/\kappa}\Bigr] \\
&\leq  \exp(-n^{1-(1+1/m)\nu/\kappa}/4)\\
&\leq \exp(-n^{1-(1+2/m)\nu/\kappa}),
\end{align*}
so 
we obtain
\begin{equation}
\label{backtrackUB}
\Pto\left[\Tback\geq n/5\right] \leq \exp(-n^{1-(1+2/m)\nu/\kappa}).
\end{equation}

By Proposition~\ref{initcrossingtime}, recalling (\ref{defTinit}), we have
\begin{equation}
\label{initUB}
\Pto\left[\Tinit\geq n/5\right] \leq \exp(-n^{1-(1+2/m)\nu/\kappa}).
\end{equation}

Recalling~\ref{defTright}, using Proposition~\ref{rightcrossingtime} and the fact that $\omega \in G_1(n)$, we get
\begin{equation}
\label{rightUB}
\Pto\left[\Tright\geq n/5\right] \leq \exp(-n^{1-(1+2/m)\nu/\kappa}).
\end{equation}

Finally, using (\ref{decompoUB}), (\ref{backtrackUB}), (\ref{initUB}), (\ref{rightUB}) and Proposition~\ref{timedirect}, we get that for all $\epsilon>0$
\[
\Pto \left[T_{n^{\nu}}>n\right] \leq C_3\exp(-n^{1-(1+2/m) \nu/ \kappa}).
\]
Hence, letting~$m$ go to~$\infty$ we obtain
\begin{equation}
\label{UboundQST}
\liminf_{n\to\infty} \frac{\ln (- \ln \Pto \left[T_{n^{\nu}}>n\right])}{\ln n} \geq 1- \frac{\nu}{\kappa},\qquad \text{$\PP$-a.s.}
\end{equation}

Now, we consider RWRE without reflection. All estimates remain true except  (\ref{backtrackUB}) for $\Tback$. Concerning the estimates on $\Tleft$ it is easy to see that since $\{\Tleft>0\}$ implies that $\Bk(n)\geq n/(\ln n)^2-1$, we have using~(\ref{boundback})
\begin{equation}
\label{leftUB2}
\Po[\Tleft\geq n/5] \leq \exp(-n^{1-(1+2/m)\nu/\kappa}).
\end{equation}

It remains to estimate $\Po[\Tback \geq n]$, hence we take $m$ and we note that
\[
\Po[\Tback>n]\leq  \sum_{k=0}^{m}\Po[\Tback>n ,\Bk(n)\in [n^{k/m},n^{(k+1)/m})].
\]

Using~(\ref{boundback}), we obtain that $\PP$-a.s.\ for~$n$ large enough,
\[
\Po[\Tback>n , \Bk(n)\in [n^{k/m},n^{(k+1)/m})]\leq C_3\exp(-n^{k/m}).
\]

Using Proposition~\ref{timebacktrack1}, we obtain that
\[
\Po[\Tback>n , \Bk(n)\in [n^{k/m},n^{(k+1)/m})]\leq C_4\exp(-C_5n^{1-(\nu\vee((k+1)/m))/\kappa}).
\]
Hence, with these estimates on $\Tback$, (\ref{decompoUB}), (\ref{initUB}), (\ref{leftUB2}), (\ref{rightUB}) and Proposition~\ref{timedirect} we obtain that $\PP$-a.s.\ for~$n$ large enough,
\[
\liminf_{n\to\infty} \frac{\ln( - \ln \Po[T_{n^{\nu}}>n])}{\ln n} \geq \min_{k \in[-1,m+1]} \Bigl(\frac{k}{m} \vee \Bigl(1-\frac{\nu \vee ((k+1)/m)}{\kappa}\Bigr)\Bigr),
\]
minimizing we obtain,
\[
\liminf_{n\to\infty} \frac{\ln( - \ln \Po[T_{n^{\nu}}>n])}{\ln n} \geq \Bigl(1- \frac{\nu}{\kappa}\Bigr) \wedge \frac{\kappa}{\kappa+1} -\frac{2}{(1\wedge\kappa) m}, \qquad \text{$\PP$-a.s.}
\]

Taking the limit as~$m$ goes to infinity yields the upper bound in (\ref{nonrefl_hit_q_sl}), 
i.e.,
\begin{equation}
\label{loboT}
 \liminf_{n\to\infty} \frac{\ln(-\ln \Po[T_{n^{\nu}}>n])}{\ln n}
 \geq \Bigl(1-\frac{\nu}{\kappa}\Bigr) \wedge \frac{\kappa}{\kappa+1},
\qquad \text{$\PP$-a.s.}
\end{equation}

\subsection{Upper bound for the probability of quenched slowdown for the walk}
\label{s_inversion_q_sl}
The argument of this section applies for both reflected and non-reflected RWREs,
for the proof in the reflected case, just replace ``$\Po$'' with ``$\Pto$''. We assume that $\omega \in A(n) \cap G_1(n) \cap B'(n,\nu,m)$ which is satisfied $\PP$-a.s.\ for~$n$ large enough.

Set $m\in \Z^+$, we have using Markov's property
\begin{align}
\label{decompo1}
\Po[X_n<n^{\nu}]\leq& \sum_{k=0}^{m}\Po[T_{n^{\nu+(k-1)/m}}<n]\\
 & \qquad\times\max_{i\leq n} \Po^{n^{\nu+(k-1)/m}}[X_i<n^{\nu},~T_{n^{\nu+k/m}}>n-i].
\nonumber
\end{align}

First let us notice that
\begin{align}
\lefteqn{
\max_{i\leq n} \Po^{n^{\nu+(k-1)/m}}[X_i\leq n^{\nu},~T_{n^{\nu+k/m}}>n-i]}\nonumber\\
& \leq\Bigl( \max_{i\leq n} \Po^{n^{\nu+(k-1)/m}}[X_i< n^{\nu}]\Bigr)\wedge\Po^{n^{\nu+(k-1)/m}}[T_{n^{\nu+k/m}}>n].
\label{decompo0}
\end{align}

Using reversibility we have for any $x \in \Z$ (omitting integer parts for simplicity),
\[
\Po^{n^{\nu+(k-1)/m}}[X_i=x] \leq  \frac{\pi(x)}{\pi(n^{\nu+(k-1)/m})},
\]
hence
\[
\max_{i\leq n} \Po^{n^{\nu+(k-1)/m}}[X_i< n^{\nu}]\leq 1\wedge \frac{\pi([-n,n^{\nu}])}{\pi(n^{\nu+(k-1)/m})}.
\]

Recall~(\ref{indices}), then by (\ref{pidef}) and the definition of $b_i$ we get $\pi(b_{i_1})\leq 2 e^{-V(b_{i_1})}$ and  
\[
\pi(b_{i_1})\leq 2 e^{-V(b_{i_1})}\leq C_6 (\ln n)^{2/\kappa} n^{1/\kappa} e^{-V(K_{i_1+1}(n))},
\]
since, due to (\ref{Gdef}), the increase of potential in a valley is at most $\frac 1{\kappa}(\ln n + 2\ln \ln n)$. Hence, 
using~(\ref{weightbottom}) and the fact that the width of the valleys is at most $(\ln n)^2$, we get that
\[
\pi([-n,n^{\nu}])\leq C_7(\ln n)^{2+2/\kappa}n^{1/\kappa} e^{-V(K_{i_1+1}(n))}.
\]
Furthermore, denoting by~$i_2$ the index of the valley containing $n^{\nu+(k-1)/m}$, for $n$ large enough we have using~(\ref{pidef})
\[
\pi(n^{\nu+(k-1)/m}) \geq \pi(K_{i_2-1}(n)),
\]
since on the event $G(n)$ both $V(K_{i_2-1}(n))$ and $V(K_{i_2-1}(n)-1)$ are bigger than $V(n^{\nu+(k-1)/m})$ and $V(n^{\nu+(k-1)/m}-1)$.

On $A(n)$, we have $\abs{(i_2-1)-i_1}\geq \abs{n^{\nu+(k-1)\epsilon}-n^{\nu}}/(\ln n)^2-2$. Since $V(K_i)-V(K_{i+1}) \geq 1/(1\wedge \kappa) \ln n$ 
for $\omega \in G_1(n)$, we have for $k\geq 2$
\begin{align}
\label{decompo2}
\frac{\pi([-n,n^{\nu}])}{\pi(n^{\nu+(k-1)/m})} &\leq C_8 (\ln n)^{2+2/\kappa} n^{1/\kappa} \exp(-(V(K_{i_1+1})-V(K_{i_2-1}))) \\ \nonumber
 &\leq C_9 (\ln n)^{2+2/\kappa} n^{1/\kappa} \exp\Bigl(-C_{10} \frac{ n^{\nu+(k-1)/m}-n^{\nu}}{(\ln n)^2}\Bigr).
\end{align}

Moreover, using~(\ref{nonrefl_hit_q_sl}) in the non-reflected case (or~(\ref{UboundQST}) in the reflected case), we have
\[
\Po^{n^{\nu+(k-1)/m}}[T_{n^{\nu+k/m}}>n]\leq  \exp(-n^{(1-(\nu+(k/m))/\kappa)\wedge (\kappa/(\kappa+1)) -1/m}).
\]
Hence, using this last inequality and~(\ref{decompo2}), the inequality~(\ref{decompo1}) becomes
\begin{align*}
\lefteqn{\Po[X_n<n^{\nu}]}\\
&\leq   \max_{k\in[-1, m+1]} \Bigl[ 1\wedge \Bigl[C_9m n^{1/\kappa} (\ln n)^{2+2/\kappa}  \exp\Bigl(-C_{10} \frac{n^{\nu+(k-1)/m}-n^{\nu}}{(\ln n)^2}\Bigr)\Bigr]\\
                     &\qquad\qquad\qquad \wedge \exp(-n^{(1-(\nu+(k/m))/\kappa)\wedge (\kappa/(\kappa+1)) -1/m})\Bigr],
\end{align*}
so that ${\bf P}$-a.s.,
\begin{align*}
\liminf_{n\to\infty} \frac{\ln (-\ln \Po[X_n<n^{\nu}])}{\ln n}
& \geq \min_{k\in[-1,m+1]}\Biggl[ \Bigl({\mathbf 1}\Bigl\{\frac{k-1}m\geq 0\Bigr\}\Bigl(\nu+\frac{k-1} m\Bigr)\Bigr)\\
 & \qquad\qquad\vee \Bigl(\Bigl(1-\frac{\nu+ k/m}{\kappa}\Bigr) \wedge \frac{\kappa}{\kappa+1} -\frac 1m\Bigr)\Biggr].
\end{align*}

Minimizing over $k$, we obtain 
\[
\liminf_{n\to\infty} \frac{\ln (-\ln \Po[X_n<n^{\nu}])}{\ln n} \geq \Bigl(1-\frac{\nu}{\kappa}\Bigr)\wedge \frac{\kappa}{\kappa+1}-\frac{1}{m}, \qquad \text{$\PP$-a.s.}
\]
Letting $m$ goes to infinity, we obtain
\begin{equation}
\label{upper_q_slowdown}
\liminf_{n\to\infty} \frac{\ln (-\ln \Po[X_n<n^{\nu}])}{\ln n} \geq \Bigl(1-\frac{\nu}{\kappa}\Bigr)\wedge \frac{\kappa}{\kappa+1}, 
\qquad \text{$\PP$-a.s.}     
\end{equation}

\subsection{Lower bound for quenched slowdown}
\label{s_lower_q_sl}
In this section we assume $\omega \in A(n) \cap D(n) \cap F(n)$ which is satisfied $\PP$-a.s.\ for~$n$ large enough. First, we consider RWRE with reflection at the origin.

For all $\epsilon>0$, note that for $n$ large enough there is a valley of depth at least 
$\frac{(1-\epsilon) \nu}{\kappa} \ln n$ strictly before level $n^\nu$ and denote by~$i_2$ the index of the first such valley. Hence
\[
\Pto[T_{n^{\nu}}>n] \geq \Pto^{\tilde K_{i_2}}[T_{\tilde K_{i_2+1}+1}>n],
\]
and using Proposition~\ref{Lboundconf} we obtain
\[
\Pto^{\tilde K_{i_2}}[T_{\tilde K_{i_2+1}+1}>n] \geq \exp(-n^{1-(1-\epsilon)\nu/\kappa+\epsilon}).
\]
Letting $\epsilon$ go to $0$, yields
\begin{equation}
\label{LboundQST}
\limsup_{n\to\infty} \frac{\ln(-\ln \Pto[T_{n^{\nu}}>n])}{\ln n} \leq 1- \frac{\nu}{\kappa}.
\end{equation}
This yields the lower bound for the exit time, so, recalling~(\ref{UboundQST}),
we obtain~(\ref{refl_hit_q_sl}).

Now let us deduce the results on the slowdown. Set $a\in[0,\kappa-\nu)$, for $n$ large enough there is a valley of depth $(\nu+(1-\epsilon)a)/\kappa \ln n$ strictly before $n^{\nu+a}$ whose index is denoted $i_3$. One possible strategy for the walk is to enter the $i_2$-th valley at $\tilde K_{i_2}+1\leq n^{\nu+a}$, stay there up to time $n-(n^{\nu+a}-n^{\nu})- (\ln n)^2$, then go to the left up to time $n$. The probability of this event can be bounded from below by
\begin{align*}
\Pto[X_n< n^{\nu}]  \geq& \Pto\left[ T_{n^{\nu+a}}<n/2\right]\min_{j\leq n} \Pto^{\widetilde K_{i_3}+1}\left[T_{\{\widetilde K_{i_3}-1,\widetilde K_{i_3+1}+1\}} >j\right]\\ &\times n^{-(3/\epsilon_0)(n^{\nu+a}-n^{\nu}+(\ln n)^2)}.
\end{align*}
The first term is bigger than~$1/2$ for~$n$ large enough 
(one can see this by using e.g.~(\ref{LboundQST})). 
The second can be bounded by Proposition~\ref{Lboundconf}
\[
\min_{j\leq n} \Pto^{\widetilde K_{i_3}+1}\left[T_{\{\widetilde K_{i_3}-1,\widetilde K_{i_3+1}+1\}} >j\right] \geq \exp(-n^{1-(\nu+(1-\epsilon)a)/\kappa+\epsilon}),
\]
for $n$ large enough. Then, the last term (going left) was dealt with using the fact that $\omega \in F(n)$.

 This yields for any $a\geq 0$,
\[
\limsup_{n\to\infty} \frac{\ln (-\ln \Pto[X_n< n^{\nu}])}{\ln n} \leq \1{a>0}(\nu+a) \vee \Bigl(1-(1-\epsilon)\frac{\nu+a}{\kappa}+\epsilon\Bigr),
\]
and if we choose $a=0\vee(\kappa/(\kappa+1)-\nu)$, we obtain
\[
\limsup_{n\to\infty} \frac{\ln (-\ln \Pto[X_n< n^{\nu}])}{\ln n} \leq \Bigl(1-\frac{\nu}{\kappa}\Bigr) \wedge \frac{\kappa}{\kappa+1} +\frac{2\epsilon}{\kappa}+\epsilon,\qquad \text{$\PP$-a.s.}
\]
Together with~(\ref{upper_q_slowdown}), this yields~(\ref{refl_loc_q_sl}) by letting $\epsilon$ go to $0$.

Now, we consider the case of RWRE without reflection.
Using the same reasoning, we write
\begin{equation}
\label{part_res_q_sl1}
\limsup_{n\to\infty} \frac{\ln(-\ln \Po[T_{n^{\nu}}>n])}{\ln n} \leq 1- \frac{\nu}{\kappa},\qquad \text{$\PP$-a.s.}
\end{equation}

Now we can see that, if we denote by~$i_4$ the index of a valley of depth at least $(1-\epsilon)/(\kappa+1)\ln n$ between $-n^{\kappa/(\kappa+1)}$ and 0, since we are on $D(n)$, we can go to this valley before reaching $n^{\nu}$ and then stay there for a time at least $n$. This yields, 
\[
\Po[T_{n^{\nu}}>n] \geq \Po[T_{-n^{\kappa/(\kappa+1)}}<T_{n^{\nu}}] \Po^{K_{i_4}}[T_{K_{i_4+1}+1}>n],
\]
bounding the first term by the probability of going to the left on the $n^{\kappa/(\kappa+1)}$ first steps, we get using Proposition~\ref{Lboundconf} that for all~$n$ large enough
\[
\Po^0[T_{n^{\nu}}>n] \geq n^{-(3/\epsilon_0)n^{\kappa/(\kappa+1)}} \exp(-n^{1-(1-2\epsilon)/(\kappa+1)}),
\]
and hence
\begin{equation}
\label{part_res_q_sl2}
\limsup_{n\to\infty} \frac{\ln(-\ln  \Po^0[T_{n^{\nu}}>n])}{\ln n} \leq \frac{\kappa}{\kappa+1}+2\frac{\epsilon}{\kappa+1},\qquad \text{$\PP$-a.s.}
\end{equation}
Moreover, it is clear that
\begin{equation}
\label{X<->T}
\Po[X_n<n^{\nu}] \geq \Po [T_{n^{\nu}} > n],
\end{equation}
and letting~$\epsilon$ go to~$0$ in~(\ref{part_res_q_sl2}) and using~(\ref{part_res_q_sl1}) 
and~(\ref{loboT}), we obtain (\ref{nonrefl_hit_q_sl}) and~(\ref{nonrefl_loc_q_sl}).
This finishes the proof of Theorem~\ref{t_q_slow}. \qed

\section{Annealed slowdown}
\label{s_a_sl}

\subsection{Lower bound for annealed slowdown}
\label{s_lower_a_sl}
Let us define the events 
\[
A'(n,\nu,a)=\bigl\{\text{there exists }x \in [-n^{\nu},n^{\nu}]:\  \max_{y\in [x,n^{\nu}]} V(y)-V(x) \geq (1+a)\ln n\bigr\},
\]
and
\[
A_+'(n,\nu,a)=\bigl\{\text{there exists }x \in [0,n^{\nu}]:\  \max_{y\in [x,n^{\nu}]} V(y)-V(x) \geq (1+a)\ln n\bigr\}.
\]

\begin{lemma}
\label{A2}
We have for $a\in (-1,1)$,
\[
 \lim_{n\to\infty}\frac{\ln \PP[A'(n,\nu,a)]}{\ln n}= \lim_{n\to\infty}\frac{\ln \PP[A_+'(n,\nu,a)]}{\ln n} = -(\kappa- \nu)-a\kappa.
\]
\end{lemma}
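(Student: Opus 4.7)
The plan is to establish matching upper and lower bounds of order $n^{\nu-\kappa(1+a)}$ for both $\PP[A'(n,\nu,a)]$ and $\PP[A_+'(n,\nu,a)]$, using Feller's tail asymptotics (\ref{fellerthm}) together with the fact that the potential has i.i.d.\ increments, so its restrictions to disjoint intervals are independent. Since $A_+'(n,\nu,a)\subset A'(n,\nu,a)$, it suffices to prove an upper bound on $\PP[A']$ and a lower bound on $\PP[A_+']$.

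For the upper bound I would apply a union bound. For each integer $x$, translation invariance gives
\[
\PP\Bigl[\max_{y\in[x,n^\nu]} V(y)-V(x)\geq (1+a)\ln n\Bigr]
\leq \PP[S\geq (1+a)\ln n]=\Theta(n^{-\kappa(1+a)}),
\]
by (\ref{fellerthm}). Summing over the at most $2\lfloor n^\nu\rfloor+1$ integer starting points yields $\PP[A'(n,\nu,a)] \leq C\,n^{\nu-\kappa(1+a)}$, whence $\limsup \ln\PP[A']/\ln n \leq \nu-\kappa(1+a)=-(\kappa-\nu)-a\kappa$; the same estimate holds for $\PP[A_+']$.

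For the lower bound on $\PP[A_+']$ I would decompose $[0,n^\nu]$ into $N=\lfloor n^\nu/L\rfloor$ disjoint blocks $I_j=[x_j,x_j+L]$ of length $L=\lceil(\ln n)^2\rceil$ and consider
\[
E_j:=\Bigl\{\max_{y\in I_j} V(y)-V(x_j)\geq (1+a)\ln n\Bigr\}.
\]
Because the $I_j$ are disjoint, the $E_j$ are independent; and each $E_j\subset A_+'(n,\nu,a)$ (take $x=x_j$). The single-block probability $p:=\PP[E_j]$ should satisfy $p\geq c\,n^{-\kappa(1+a)}$ for some $c>0$ and all large $n$. To see this, set $h=(1+a)\ln n$ and decompose
\[
\PP[S\geq h] = \PP\Bigl[\max_{0\leq y\leq L}V(y)\geq h\Bigr]+\PP\Bigl[\max_{0\leq y\leq L}V(y)<h\leq \max_{y>L}V(y)\Bigr];
\]
by (\ref{fellerthm}) the left-hand side is $\Theta(e^{-\kappa h})$, and the second term on the right is $o(e^{-\kappa h})$. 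The latter follows by conditioning on $V(L)$: since $\EE[\ln\rho_0]<0$, a Chebyshev bound (together with the existence of exponential moments from (\ref{def_kappa})--(\ref{integ_hyp})) gives $\PP[V(L)>-c_0L/2]$ super-polynomially small, while on the complementary event the strong Markov property and (\ref{fellerthm}) yield $\PP[\max_{y>L}V(y)\geq h\mid V(L)]\leq C\,e^{-\kappa(h+c_0L/2)}$, negligible compared to $e^{-\kappa h}$ for $L=(\ln n)^2$.

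Combining, by independence
\[
\PP[A_+'(n,\nu,a)]\geq 1-(1-p)^N\geq \frac{Np}{2}\geq c'\,\frac{n^{\nu-\kappa(1+a)}}{(\ln n)^2}
\]
in the regime $Np\leq 1$ (which is the interesting one, where the target exponent is non-positive), giving $\liminf \ln\PP[A_+']/\ln n\geq \nu-\kappa(1+a)$. Since $A_+'\subset A'$, the same lower bound passes to $\PP[A']$, and together with the upper bound we obtain both claimed equalities. The main technical step is the truncation argument that transfers the infinite-horizon estimate (\ref{fellerthm}) to the polylog window $[0,L]$ at only a multiplicative-constant cost; the rest is union bounds and independence.
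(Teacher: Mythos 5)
Your proposal is correct and follows essentially the same route as the paper: a union bound over integer starting points combined with the Feller estimate~(\ref{fellerthm}) for the upper bound, and a decomposition of $[0,n^\nu]$ into disjoint blocks of length $(\ln n)^2$, with independence and a truncated version of~(\ref{fellerthm}) (showing the infinite-horizon maximum is attained within the polylog window up to a negligible error), for the lower bound. The paper organizes the truncation slightly differently (bounding $\PP[A_1]$ from below by $\PP[S\geq h]$ minus the probabilities that $V((\ln n)^2)>-\ln n$ or that the post-$(\ln n)^2$ maximum rise exceeds $(2+a)\ln n$), but the underlying estimates and conclusion are the same.
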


\begin{proof}
From (\ref{fellerthm}), it is straightforward to obtain that
\begin{align*}
\PP[A_+'(n,\nu,a)]&\leq \PP[A'(n,\nu,a)] \\
&\leq 2n^{\nu} \PP\Bigl[ \max_{i \geq 0} V(i) \geq (1+a) \ln n\Bigr]\\ &=\Theta(n^{\nu-(1+a)\kappa}).
\end{align*}

In order to give the corresponding lower bound, let us define the event 
\[
A_1(n,a)=\bigl\{\text{there exists }k \in[0,(\ln n)^2] \text{ such that } V(k) \geq (1+a)\ln n \bigr\},
\]
we have
\begin{align*}
\PP[A_1(n,a)]
  \geq& \PP\Bigl[ \max_{i \geq 0} V(i) \geq (1+a)\ln n\Bigr] - \PP[V(\ln n)^2>- \ln n] \\
  &-\PP\Bigl[\max_{i \geq (\ln n)^2} V(i)-V((\ln n)^2) > (2+a) \ln n\Bigr]
\nonumber\\
  =& \Theta( n^{-(1+a)\kappa}),
\end{align*}
where we used (\ref{fellerthm}) and a reasoning similar to the proof of Lemma~\ref{widthvalley}. 
Now, we write
\[
\PP[A'(n,\nu,a)] \geq \PP[A_+'(n,\nu,a)]\geq \frac {n^{\nu}} {\lfloor (\ln n)^2\rfloor } \PP[A_1(n)] =\Theta\Bigl( \frac{n^{\nu-(1+a)\kappa}}{(\ln n)^2}\Bigr),
\]
and Lemma~\ref{A2} follows.
\end{proof}

For any $\epsilon>0$, on the event $A_+'(n,\nu,\epsilon)$ there exists a valley $[K_i, K_{i+1}]$ with $V(K_{i+1})- V(b_i) \geq (1+\epsilon)\ln n$ 
contained in $[0, n^{\nu})$ and we denote by $i_5$ its index. Then we have by Proposition~\ref{cost_to_climb}
\[
\Po[T_{n^{\nu}}>n] \geq P_{\omega}^{b_{i_5}}[T_{K_{i_5+1}+1}>n]\geq 1-\gamma_2(1+n)e^{-(1+\epsilon)\ln n}\geq \frac 12
\]
for $n$ large enough. So  
\[
\PA[T_{n^{\nu}}>n] \geq E[\1{A_+'(n,\nu,\epsilon)}\Po[T_{n^{\nu}}>n]]\geq \frac 12 \PP[A_+'(n,\nu,\epsilon)].
\]
Hence we obtain by Lemma~\ref{A2} that for any $\epsilon>0$
\[
\liminf_{n\to\infty} \frac{\ln \PA[T_{n^{\nu}}>n]}{\ln n} \geq  -(\nu-\kappa) -\kappa \epsilon.
\]
Using~(\ref{X<->T}), we obtain the corresponding lower bound for $\PA[X_n<n^{\nu}]$
as well. Replacing $\Po$ by $\Pto$ and $\PA$ by $\PAt$,
exactly the same argument can be used to obtain the result in the reflected case.

\subsection{Upper bound for annealed slowdown}
\label{s_upper_a_sl}
We prove the upper bound in the non-reflected case, the reflected case follows easily; indeed a simple coupling argument shows that $T_{n^{\nu}}$ in the environment $\tom$ is stochastically dominated by $T_{n^{\nu}}$ in the environment $\omega$. For $m \in \N$ such that $1/m\in (0,\nu)$, we have
\[
\PA[T_{n^{\nu}}>n] \leq \PP[A'(n,\nu,-1/m)]+\EE\big(\1{A'(n,\nu,-1/m)^c}\Po^0[T_{n^{\nu}}>n]\big).
\]
The second term can be further bounded by
\begin{align*}
\lefteqn{\EE\big(\1{A'(n,\nu,-1/m)^c}\Po^0[T_{n^{\nu}}>n]\big)}\\
& \leq \PP[A(n)^c \cup B'(n,\nu,m)^c] \\
& \qquad +\EE\big(\1{A'(n,\nu,-1/m)^c \cap A(n)\cap B'(n,\nu,m)}\Po^0[T_{n^{\nu}}>n]\big),
\end{align*}
where $B'(n,\nu,m)$ is defined in~(\ref{def_b_prime}).

Using Lemma~\ref{A2} we have that $1/n=o(\PP[A'(n,\nu,-1/m)])$, and thus Lemma~\ref{widthvalley} and Lemma~\ref{depthvalley1} imply that 
\[
\PP[A(n)^c \cup B'(n,\nu,m)^c]=o(\PP[A'(n,\nu,-1/m)]).
\]

We can turn~(\ref{domstoch_1}) into the following, for $i \in N(-n^{\epsilon},n^{\nu})$ we have
\[
\text{on $A'(n,\nu,-1/m)^c \cap A(n) \cap B'(n,\nu,m)$,} \qquad \frac{Z}{C_8n^{(1-1/m)}(\ln n)^{{\gamma}}}\prec 1 + \mathbf{e},
\]
where $Z$ has the same law as $T_{K_{i+1}(n)}$ under $\Po^{K_i(n)}[~\cdot\mid T_{K_{i+1}(n)}<T_{K_{i-1}(n)}]$; ${\gamma}={\gamma}(\kappa)$ and $\mathbf{e}$ denotes an exponential random variable of parameter $1$. The same inequality is true when $K_{i-1}(n)$ and $K_{i+1}(n)$ are exchanged. 

This stochastic domination is the key argument for Section~\ref{s_upper_q_slowdown}. We can adapt the proof of Proposition~\ref{timebacktrack}, so that on $A'(n,\nu,-1/m)^c \cap A(n) \cap B'(n,\nu,m)$ we obtain for all $u \geq 1$, 
\[
\Po\Bigl[\frac{\Tback}{{n^{1-1/m}(\ln n)^{{\gamma}}}} \geq \exp(n^{1/(2m)}) , \Bk(n) \leq n^{1/(4m)} \Bigr]\leq e^{-n^{1/(2m)}/4},
\]
and
\[
\Po\Big[\Tright> \frac n5\Big]\leq C_1 \exp(-n^{1/(4m)}).
\]

Moreover, (\ref{boundback}) still holds, so that
\[
\Po[\Bk(n) \geq n^{1/(4m)} ]\leq C_2 \exp(-n^{1/(4m)}),
\]
which yields
\[
\Po\Big[\Tleft> \frac n5\Big]\leq C_3 \exp(-n^{1/(4m)}).
\]

Finally, recalling~(\ref{defTinit}) and using Proposition~\ref{spectral} on $A'(n,\nu,-1/m)^c \cap A(n)$, we obtain
\[
\Po\Big[\Tinit> \frac n5\Big]\leq C_4 \exp(-n^{1/(4m)}).
\]

Since Proposition~\ref{timedirect} remains true and $A'(n,\nu,-1/m)^c \subset G(n)$, we get that for all $\omega \in A'(n,\nu,-1/m)^c \cap A(n) \cap B'(n,\nu,m)$
\[ 
\Po[T_{n^{\nu}}>n] \leq C_5\exp(-n^{1/(4m)}).
\]
Loosely speaking it costs at least $\exp(-n^{1/(2m)})$ to backtrack $n^{1/m}$ times, hence, on $A'(n,\nu,-1/m)^c \cap A(n) \cap B'(n,\nu,m)$, we can only see valleys of size lower than $(1-1/m) \ln n$. To spend a time $n$ in those valleys would cost at least $\exp(-n^{1/(2m)})$. This finally implies that for all $m>0$,
\[
\limsup_{n\to \infty} \frac{\ln \EE\Bigl[\1{A'(n,\nu,-1/m)^c,A(n)^c,B'(n,\nu,m)^c}\Po^0[T_{n^{\nu}}>n]\Bigr]}{\ln n} = 0,
\]
so that
\begin{equation}
\label{UBAS}
\limsup_{n\to\infty} \frac{\ln \PA[T_{n^{\nu}}>n]}{\ln n} \leq -(\kappa- \nu) +\frac{\kappa}m,
\end{equation}
the result for the hitting time follows by letting $m$ go to infinity.

It is simple to extend this result to the position of the walk, indeed if $X_n<n^{\nu}$ then $T_{n^{(1+1/m)\nu}}>n$ or $\Bk(n) \geq n^{1/(2m)}$ and hence using~(\ref{boundback}) , we get for all $m>0$
\[
\PA[X_n<n^{\nu}]\leq \PA[T_{n^{(1+1/m)\nu}}>n]+C_6e^{-n^{1/(2m)}},
\]
and the result follows by using~(\ref{UBAS}) and letting $m$ go to infinity.

This concludes the proof of Theorem~\ref{t_a_slow}. \qed

\section{Backtracking}
\label{s_backtracking}
In this section we prove Theorem~\ref{t_backtrack}.

\subsection{Quenched backtracking for the hitting time}
\label{s_q_backtracking_t}

Set $\nu \in (0,1)$ and consider $\Po[T_{-n^{\nu}}<n]$. First, we get that
\[
\text{for all $\omega\in F(n)$,} \qquad \Po[T_{-n^{\nu}}<n]\geq n^{-(3/\epsilon_0)n^{\nu}},
\]
since the particle can go straight to the left during the first~$n^{\nu}$ steps, hence
\begin{equation}
\label{backtrack_limsup}
\limsup_{n\to \infty}  \frac{\ln(-\ln \Po[T_{-n^{\nu}}<n])}{\ln n} \leq \nu.
\end{equation}

Secondly, we remark that if $(-\infty,-n^{\nu}]$ 
has been hit before time~$n$ then, at some time $i\leq n$ 
the particle is at $X_i\in [-n, -n^{\nu}]$ and hence
for all $\omega$ 
\begin{align}
\Po[T_{-n^{\nu}}<n]&\leq \sum_{i=1}^n \Po[X_i \in [-n,-n^{\nu}] ] \nonumber\\
&\leq n \max_{i\leq n}\Po[X_i \in [-n,-n^{\nu}] ].
\label{back_pi}
\end{align}

In order to estimate this quantity, we use arguments similar to those in Section~\ref{s_inversion_q_sl}, i.e., first we use the reversibility of the walk to write
\[
\max_{i\leq n}\Po[X_i \in [-n,-n^{\nu}]]\leq \frac{\pi([-n,-n^{\nu}])}{\pi (0)},
\]
then, the right-hand side can be estimated in the same way as we obtained~(\ref{decompo2}), 
and so we get on $A(n)\cap G_1(n)$ that
\[
\frac{\pi([-n,-n^{\nu}])}{\pi (0)}\leq C_1 (\ln n)^{2+2/\kappa} n^{1/\kappa}\exp(-C_2 n^{\nu}/(\ln n)^2).
\]

The previous inequality and~(\ref{back_pi}) yield
\[
\text{for all $\omega\in A(n)\cap G(n)$,} \qquad \Po[T_{-n^{\nu}}<n]\leq C_3 n^{1+2/\kappa}\exp(-C_2 n^{\nu}/(\ln n)^2),
\]
so that
\[
\liminf_{n\to \infty}  \frac{\ln(-\ln \Po[T_{-n^{\nu}}<n])}{\ln n} \geq \nu.
\]

Together with ~(\ref{backtrack_limsup}), this  proves~(\ref{backtrack_T}).

\subsection{Quenched backtracking for the position of the random walk}
\label{s_q_backtracking_rw}

Let us denote $a_0=\frac{\kappa}{\kappa+1} \vee \nu$. We give a lower bound for $\Po[X_n<-n^{\nu}]$. For~$n$ large enough, there exists ${\bf P}$-a.s.\ a valley of depth $(1-\epsilon)(a_0/\kappa)\ln n$ of index~$i_2$, between $-n^{a_0}$ and $0$. Consider the event that the walker goes to this valley directly and stays there up to time $n-n^{a_0}$ and then goes to the left for the next~$n^{a_0}+1$ steps. On this event we have $X_n< -n^{a_0}$, so we obtain
\begin{align*}
 \Po[X_n<-n^{\nu}] &\geq n^{-(3/\epsilon_0)2(n^{a_0}+1)} \Po^{K_{i_2+1}-1}[T_{\{K_{i_2}-1,K_{i_2+1}+1\}} \geq n]\\
& \geq n^{-(3/\epsilon_0)2(n^{a_0}+1)}\exp(-n^{1-(1-2\epsilon)a_0/\kappa}),
\end{align*}
where we used Proposition~\ref{Lboundconf} and $\omega\in F(n)$. Hence we obtain
\[
\limsup_{n\to \infty} \frac{\ln(-\ln\Po[X_n<-n^{\nu}])}{\ln n} \leq a_0 +\frac{2\epsilon a_0}{\kappa},
\]
and letting $\epsilon$ go to 0 we have
\begin{equation}
\label{limsup_q_back_rw}
\limsup_{n\to \infty} \frac{\ln(-\ln\Po[X_n<-n^{\nu}])}{\ln n} \leq a_0.
\end{equation}

Turning to the upper bound, we have for $m\in \N$,
\begin{equation}
\label{back_step1}
\Po[X_n<-n^{\nu}]\leq \sum_{k=0}^{m}\Po[T_{n^{(k-1)/m}}<n] \max_{i\leq n} \Po^{n^{(k-1)/m}}[T_{n^{k/m}}>n-i,X_i< -n^{\nu}],
\end{equation}
where once again
\begin{align*}
 \lefteqn{
\max_{i\leq n} \Po^{n^{(k-1)/m}}[T_{n^{k/m}}>n-i,X_i< -n^{\nu}]}\\
& \leq \Bigl(\max_{i\leq n} 
   \Po^{n^{(k-1)/m}}[X_i< -n^{\nu}]\Bigr)\wedge\Po^{n^{(k-1)/m}}[T_{n^{k/m}}>n] .
\end{align*}

First, using~(\ref{nonrefl_hit_q_sl}), for~$n$ large enough
\begin{equation}
\label{back_step2}
\Po^{n^{(k-1)/m}}[T_{n^{k/m}}>n ] \leq \exp(-n^{(1-(k/m)/\kappa)\wedge (\kappa/(\kappa+1))-1/m}).
\end{equation}

Then, as in Section~\ref{s_inversion_q_sl}, the reversibility of the walk yields that
\begin{equation}
\label{back_step3}
\max_{i\leq n}\Po^{n^{(k-1)/m}}[X_i \in [-n,-n^{\nu}]]\leq \frac{\pi([-n,-n^{\nu}])}{\pi (n^{(k-1)/m})},
\end{equation}
the right-hand side can be estimated in the same way we obtained~(\ref{decompo2}) and we get on $A(n)\cap G(n)$
\begin{equation}
\label{back_step4}
\frac{\pi([-n,-n^{\nu}])}{\pi (n^{(k-1)/m})}\leq C_4 \exp(-C_5 (n^{(k-1)/m}+n^{\nu})/(\ln n)^2).
\end{equation}

Putting together~\eqref{back_step1},~\eqref{back_step2},~\eqref{back_step3}, and~\eqref{back_step4}, we obtain
\begin{align*}
\lefteqn{\liminf_{n\to \infty} \frac{\ln(-\ln\Po[X_n<-n^{\nu}])}{\ln n}}\\
 & \geq
\min_{k\in[0,m]} \Bigl(\Bigl(\Bigl(1-\frac{k}{m\kappa} \Bigr)\wedge \frac{\kappa}{\kappa+1}\Bigr)\vee \Bigl(\frac{k-1}m \vee \nu \Bigr) \Bigr)-\frac 1m,
\end{align*}
minimizing yields that
\[
\liminf_{n\to \infty} \frac{\ln(-\ln\Po[X_n<-n^{\nu}])}{\ln n} \geq a_0 -\frac 2{m},
\]
letting~$m$ go to infinity and recalling~\eqref{limsup_q_back_rw} we obtain~\eqref{q_backtrack}.




\subsection{Annealed backtracking}
\label{s_a_backtracking}
Let $\theta_0=\EE\left[\ln\rho_0\right]<0$. Define
\[
 \RR = \Bigl\{\omega: V(x)\leq \frac{\theta_0}{3}n^{\nu} \text{ for }x\in [0,n],
   |V(x)+\theta_0x|\leq \frac{|\theta_0|}{3}n^\nu
\text{ for }x\in[-n^\nu,0)\Bigr\}.
\]
Since~$V$ is a sum of i.i.d.~random variables having some finite exponential moments, we can use large deviations techniques to obtain~$C_6$ such that
\begin{equation}
\label{bound_PP_R}
 \PP[\RR] \geq 1-2ne^{-C_6n^\nu}.
\end{equation}

Then, on~$\RR$, using~(\ref{exit_probs}), we obtain
\begin{align}
 \Po[T_{-n^\nu}<n] &\leq \Po[T_{-n^\nu}<T_n] \nonumber\\
 &\leq C_7n \exp\Bigl(-\frac{2\theta_0}{3}n^\nu\Bigr). \label{Po_on_RR}
\end{align}
Using~(\ref{bound_PP_R}) and~(\ref{Po_on_RR}), we
obtain
\begin{equation}
\label{b*}
 \PA[X_n<-n^\nu] \leq  \PA[T_{-n^\nu}<n]\leq e^{-C_8 n^\nu}.
\end{equation}

On the other hand, we easily obtain that 
\begin{equation}
\label{b**}
\PA[T_{-n^{\nu}}<n]\geq \PA[X_n<-n^\nu] \geq \Bigl(\frac{\hdel}2\Bigl)^{n^\nu} n^{-C_9},
\end{equation}
where $\hdel>0$ is such that $\PP[1-\omega_0 \geq \hdel]>1/2$. Indeed on the event of probability at least $(1/2)^{n^{\nu}}$ that $1-\omega_x\geq \hdel$ for $x\in (-n^{\nu},0]$, the particle can go ``directly'' (to the left on each step) to $(-n^\nu)$,
and then the cost of creating a valley of depth $2\ln n$ there is polynomial and then it costs nothing to stay there for a time~$n$ by Proposition~\ref{cost_to_climb}.
Now, (\ref{b*}) and~(\ref{b**}) imply~(\ref{a_backtrack}).
This finishes the proof of Theorem~\ref{t_backtrack}. \qed

\section{Speedup}
\label{s_speedup}
In this section we prove Theorem~\ref{t_speedup}.
So, we have $\kappa<1$, $\nu\in (\kappa,1)$; let us denote 
$g(\alpha)=\nu+\frac{\alpha}{\kappa}-\alpha$, and let $\alpha_0=\kappa\frac{1-\nu}{1-\kappa}$.
Clearly, $g(\alpha)$ is a linear function, $g(0)=\nu<1$, $g(\nu)=\frac{\nu}{\kappa}>1$,
and $g(\alpha_0)=1$; note also that $\nu-\alpha_0=\frac{\nu-\kappa}{1-\kappa}$.

The discussion in this section is for the RWRE on~$\Z$ (i.e., without reflection),
the proof for the reflected case is quite analogous.

\subsection{Lower bound for the quenched probability of speedup}
\label{s_lower_q_speedup}
We are going to obtain a lower bound for $\Po[X_n> n^\nu]$.

By Lemma~\ref{depthvalley1} and Borel-Cantelli, for any fixed~$m$, $\omega\in B'(n,\alpha_0,m)\cap A(n)\cap F(n)$
for all~$n$ large enough, $\PP$-a.s.\ 
(recall the definition of~$A(n)$ and~$B'(n,\alpha_0,m)$ from Section~\ref{s_estimates_env}).
So, from now on we suppose that $\omega\in B'(n,\alpha_0,m)\cap A(n)$.

Let us denote $M=N_n(0,n^{\nu})$, define the index sets
\begin{align*}
 \II_0 &= \{i\in M : H_{i-1}\vee H_i \leq \ln\ln n\},\\
 \II_k &= \Bigl\{i\in M : (H_{i-1}\vee H_i) - \ln\ln n \in \Bigl[\frac{(k-1)\alpha_0}{m\kappa}\ln n,
          \frac{k\alpha_0}{m\kappa}\ln n\Bigr)\Bigr\}
\end{align*}
for $k\in[1,m-1]$, and
\begin{align*}
 \UU &= \Bigl\{i\in M : H_{i-1}\vee H_i \geq \frac{(m-1)\alpha_0}{m\kappa}\ln n + \ln\ln n\Bigr\}.
\end{align*}
Note that on~$B'(n,\alpha_0,m)$
\begin{align}
 \card\UU &\leq n^{\nu-\alpha_0 + \frac{\alpha_0}{m}} 
             = n^{\frac{\nu-\kappa}{1-\kappa} + \frac{\alpha_0}{m}}, \label{razmer_U}\\
 \card\II_k &\leq n^{\nu-\frac{k\alpha_0}{m}}, \qquad \text{ for all } k=1,\ldots,m-1.\label{razmer_I_k}
\end{align}

Recalling~(\ref{indices}) we define the quantities $\sigma_{i_0}=T_{K_{i_0+1}}$, $\sigma_{i_1}=T_{n^\nu}-T_{K_{i_1}}$,
and $\sigma_j=T_{K_{j+1}}-T_{K_j}$ for $j=i_0+1,\ldots,i_1-1$.
Then for $\epsilon>0$, we can write
\begin{align}
\Po[X_n> n^{(1-\epsilon)\nu}] \geq & \Po\Bigl[\sum_{k=0}^{m-1}\sum_{i\in\II_k}\sigma_i\leq \frac{n}{2}\Bigr]
     \Po\Bigl[\sum_{i\in\UU}\sigma_i\leq \frac{n}{2}\Bigr]\nonumber\\
 & {}\times   \Po^{n^\nu}\bigl[X_j> n^{(1-\epsilon)\nu} \text{ for all }
       i\in[0,n-n^\nu]\bigr].\label{decompos_low_P_speed}
\end{align}

Let us obtain lower bounds for the three terms in the right-hand side
of~(\ref{decompos_low_P_speed}). First, we write using~(\ref{razmer_I_k})
\begin{align}
 \Po\Bigl[\sum_{k=0}^{m-1}\sum_{i\in\II_k}\sigma_i\leq \frac{n}{2}\Bigr] &\geq
  \prod_{k=0}^{m-1} \Po\Bigl[\sum_{i\in\II_k}\sigma_i\leq \frac{n}{2m}\Bigr]\nonumber\\
 &\geq \prod_{k=0}^{m-1} \Po\Bigl[\sigma_i\leq \frac{1}{2m}n^{1-(\nu-\frac{k\alpha_0}{m})} 
 \text{ for all }i\in\II_k\Bigr]. \label{coisa*}
\end{align}
Now, consider any $\ell\in\II_k$ and write
\begin{align*}
\lefteqn{\Po\Bigl[\sigma_\ell\leq \frac{1}{2m}n^{1-(\nu-\frac{k\alpha_0}{m})} \Bigr]}\qquad\\
 &\geq
  \Po^{K_\ell}[T_{K_{\ell+1}}<T_{K_{\ell-1}}] \\ & \quad\times
   \Po^{K_\ell}\Bigl[T_{\{K_{\ell-1},K_{\ell+1}\}}\leq \frac{1}{2m}n^{1-(\nu-\frac{k\alpha_0}{m})}  \mid T_{K_{l+1}}<T_{K_{l-1}}\Bigr].
\end{align*}

By the formula~(\ref{crossingprob}), on~$A(n)$ we have
\[
 \Po^{K_\ell}[T_{K_{\ell+1}}<T_{K_{\ell-1}}] \geq 1 - n^{-3/2},
\]
and by Proposition~\ref{crossingtime},
\begin{align*}
 \Po^{K_\ell}\Bigl[T_{\{K_{\ell-1},K_{\ell+1}\}}\leq \frac{1}{2m}n^{1-(\nu-\frac{k\alpha_0}{m})}\mid T_{K_{l+1}}<T_{K_{l-1}}\Bigr]
 \\ \geq 1-\exp\Bigl(-\frac{C_1}{m(\ln n)^{{\gamma}}}n^{1-(\nu-\frac{k\alpha_0}{m})-\frac{k\alpha_0}{m\kappa}}\Bigr),
\end{align*}
so
\begin{equation}
\label{coisa**}
 \Po\Bigl[\sigma_\ell\leq \frac{1}{2m}n^{1-(\nu-\frac{k\alpha_0}{m})} \Bigr] \geq
   (1 - n^{-3/2}) 
    \Bigl(1-\exp\Bigl(-\frac{C_1}{m(\ln n)^{{\gamma}}}
          n^{1-g(\frac{k\alpha_0}{m})}\Bigr)\Bigr).
\end{equation}

Now, for $k\leq m-1$ we have 
\[
 1-g\Bigl(\frac{k\alpha_0}{m}\Bigr)\geq \frac{(1-\kappa)\alpha_0}{m\kappa},
\]
so~(\ref{coisa*}) and~(\ref{coisa**}) imply that 
\begin{align}
 \Po\Bigl[\sum_{k=0}^{m-1}\sum_{i\in\II_k}\sigma_i\leq \frac{n}{2}\Bigr] &\geq 
   \prod_{k=0}^{m-1} \Biggl[  (1 - n^{-3/2})\Bigr) 
        \Bigl(1-\exp\Bigl(-\frac{C_1}{m(\ln n)^{{\gamma}}}
          n^{\frac{(1-\kappa)\alpha_0}{m\kappa}}\Bigr)\Bigr) \Biggr]^{n^\nu}\nonumber\\
 &\to 1 \qquad \text{as }n\to\infty. \label{ocenka_term1}
\end{align}

Now, we obtain a lower bound for the second term in the right-hand side of~(\ref{decompos_low_P_speed}).
On $G_1(n)$, we get an upper bound on $\rho_i$ for $i\in[-n,n]$ and hence we have $\omega_x \geq n^{-C_2}$, we obtain for any $\ell\in\UU$ (imagine that, to cross the corresponding interval, the particle just goes to the right at each step)
\begin{equation}
 \Po\Bigl[\sigma_\ell \leq \frac{1}{2}n^{1-(\nu-\alpha_0)-\frac{\alpha_0}{m}}\Bigr] 
          \geq n^{-C_2 (\ln n)^2},
\end{equation}
so, 
\begin{align}
 \Po\Bigl[\sum_{i\in\UU}\sigma_i\leq \frac{n}{2}\Bigr] &\geq 
   \Po\Bigl[\sigma_\ell \leq \frac{1}{2}n^{1-(\nu-\alpha_0)-\frac{\alpha_0}{m}}
          \text{ for all }\ell\in\UU\Bigr] \nonumber\\
 &\geq \bigl(n^{-C_2(\ln n)^2}\bigr)^{n^{\frac{\nu-\kappa}{1-\kappa}+\frac{\alpha_0}{m}}}\nonumber\\
 & = \exp\Bigl(- C_2(\ln n)^3 n^{\frac{\nu-\kappa}{1-\kappa}+\frac{\alpha_0}{m}}\Bigr)
\label{ocenka_term2}
\end{align}
(recall that $\nu-\alpha_0 = \frac{\nu-\kappa}{1-\kappa}$).

As for the third term in~(\ref{decompos_low_P_speed}), using~(\ref{exit_probs}) we
easily obtain that, on~$A(n)\cap G(n)$, 
\begin{equation}
\label{ocenka_term3}
 \Po^{n^\nu}\bigl[X_j> n^{(1-\epsilon)\nu} \text{ for all } j\in[0,n-n^\nu]\bigr]  \geq \Po^{n^\nu}[T_n<T_{n^{(1-\epsilon)\nu}}]>C_3>0.
\end{equation}

Now, plugging (\ref{ocenka_term1}), (\ref{ocenka_term2}), and~(\ref{ocenka_term3})
into~(\ref{decompos_low_P_speed}) and sending~$m$ to~$\infty$, we obtain that
\[
\limsup_{n\to\infty} \frac{\ln(-\ln \Po[X_n> n^{(1-\epsilon)\nu}])}{\ln n} \leq \frac{\nu-\kappa}{1-\kappa}, \qquad \text{$\PP$-a.s.}
\]
applying this for $\nu'=\nu/(1-\epsilon)$ and letting $\epsilon$ go to 0,
\begin{equation}
\label{lower_b_sp}
\limsup_{n\to\infty} \frac{\ln(-\ln \Po[X_n> n^{\nu'}])}{\ln n} \leq \frac{\nu'-\kappa}{1-\kappa}, \qquad \text{$\PP$-a.s.}
\end{equation}
Since obviously $\Po[T_{n^\nu}< n]\geq\Po[X_n> n^\nu]$, (\ref{lower_b_sp}) holds
for $\Po[T_{n^\nu}< n]$ as well.

\subsection{Upper bound for the quenched probability of speedup}
\label{s_upper_q_speedup}
Fix~$\epsilon>0$ such that $\alpha_0+\epsilon<\nu$. Define 
\begin{align*}
 \WW &= \Bigl\{i\in N_n(0,n^\nu) : H_i\geq \frac{\alpha_0+\epsilon}{\kappa}\ln n - 4\ln\ln n\Bigr\},\\
 \Psi_n^\epsilon &= \Bigl\{\omega: \card\WW \geq \frac{1}{3} n^{\nu-\alpha_0-\epsilon}\Bigr\}. 
\end{align*}
By Lemma~\ref{depthvalley2}, on each subinterval of length~$n^{\alpha_0+\epsilon}$ we find a valley
of depth at least $\frac{\alpha_0+\epsilon}{\kappa}\ln n - 4\ln\ln n$ with probability
at least $1/2$. Since the interval $[0,n^\nu]$ contains $n^{\nu-\alpha_0-\epsilon}$
such subintervals, we have
\begin{equation}
\label{PP_Theta}
 \PP[\Psi_n^\epsilon] \geq 1 - \exp(-C_4n^{\nu-\alpha_0-\epsilon}),
\end{equation}
in particular by Borel-Cantelli's Lemma, $\PP$-a.s.~we have $\omega \in \Psi_n^\epsilon$ for $n$ large enough.

For $i\in\WW$, define $\ts_i=T_{K_{i+1}+1}-T_{K_i+1}$, and let 
\[
 s_0 = \frac{1}{4{\gamma}_2(\ln n)^4} n^{\frac{\alpha_0+\epsilon}{\kappa}}.
\]
Then, by Proposition~\ref{cost_to_climb}, for any $i\in\WW$,
\begin{align}
 \Po[\ts_i<s_0] &\leq 2{\gamma}_2 s_0 \exp\Bigl(-\frac{\alpha_0+\epsilon}{\kappa}\ln n + 4\ln\ln n\Bigr)
   \nonumber\\
 &= 2{\gamma}_2 s_0n^{-\frac{\alpha_0+\epsilon}{\kappa}}(\ln n)^4 \nonumber\\
 &= \frac{1}{2}. \label{ocenka_climb}
\end{align}

Define the family of random variables $\zeta_i=\1{\ts_i<s_0}$, $i\in\WW$.
These random variables are independent with respect to~$\Po$, and
$\Po[\zeta_i=1]\leq 1/2$ by~(\ref{ocenka_climb}). Suppose without
restriction of generality that (recall that $g(\alpha_0)=1$)
\[
 \frac{1}{3}s_0 \times \frac{1}{3}n^{\nu-\alpha_0-\epsilon}
 = \frac{1}{36{\gamma}_2 (\ln n)^4} n^{g(\alpha_0+\epsilon)} > n.
\]
Then, since $\card\WW \geq \frac{1}{3}n^{\nu-\alpha_0-\epsilon}$ for $\omega \in \Psi_n^\epsilon$, we see using large deviations techniques that for $n$ large enough
\begin{align}
 \Po[T_{n^\nu}<n] &\leq \Po\Bigl[\sum_{i\in\WW}\zeta_i > \frac{2}{3} \card\WW\Bigr]\nonumber\\
 &\leq \exp\bigl(-C_5 n^{\frac{\nu-\kappa}{1-\kappa}-\epsilon}\bigr) \label{oc_up_q_sp}
\end{align}
(recall that $\nu-\alpha_0 = \frac{\nu-\kappa}{1-\kappa}$).
Since~$\epsilon>0$ is arbitrary, we obtain 
\begin{equation}
\label{upper_b_sp}
 \liminf_{n\to\infty} \frac{\ln(-\ln \Po[T_{n^\nu}<n])}{\ln n} \geq \frac{\nu-\kappa}{1-\kappa} \qquad \text{$\PP$-a.s.}
\end{equation}
Together with~(\ref{lower_b_sp}), this shows~(\ref{eq_q_speed}).

\subsection{Annealed speedup}
\label{s_a_speedup}
As usual, the quenched lower bound obtained in Section~\ref{s_lower_q_speedup} also yields the
annealed one, i.e. (\ref{lower_b_sp}) implies that
\begin{equation}
\limsup_{n\to\infty} \frac{\ln(-\ln \PA[X_n> n^\nu])}{\ln n} \leq \frac{\nu-\kappa}{1-\kappa}, 
\end{equation}
Turning to the upper bound, we have by~(\ref{PP_Theta})
and~(\ref{oc_up_q_sp}) that
\begin{align*}
 \PA[T_{n^\nu}<n] &= \int \Po[T_{n^\nu}<n] \, d\PP\\
 &\leq \int_{\Psi_n^\epsilon} \Po[T_{n^\nu}<n] \, d\PP + \PP[(\Psi_n^\epsilon)^c]\\
 &\leq \exp\bigl(-C_5 n^{\frac{\nu-\kappa}{1-\kappa}-\epsilon}\bigr)
            + \exp\bigl(-C_4 n^{\frac{\nu-\kappa}{1-\kappa}-\epsilon}\bigr),
\end{align*}
and this implies~(\ref{eq_a_speed}). This finishes the proof of Theorem~\ref{t_speedup}. \qed

\section*{Acknowledgements}
A.F. would like to thank the ANR \lq\lq MEMEMO\rq\rq, the \lq\lq Accord France-Br\'esil\rq\rq and the ARCUS program.

S.P.\ is thankful to FAPESP (04/07276--2), CNPq (300328/2005--2 and 471925/2006--3), 
and N.G. and S. P. are thankful to CAPES/DAAD (Probral) for financial support.

We thank two anonymous referees whose extremely careful lecture of the first version lead to many improvements.

\end{document}